\def\essinf{\mathop{\mathrm{essinf}}\limits}
\def\esssup{\mathop{\mathrm{esssup}}\limits}
\def\Dzw1#1{\frac{\partial^2 #1}{\partial z \partial w_1}}
\def\Dzb1#1{\frac{\partial^2 #1}{\partial z \partial b_1}}
\numberwithin{equation}{section}
\newtheorem{definition}{Definition}[section]
\newtheorem{theorem}[definition]{Theorem}
\newtheorem{remark}[definition]{ \it Remark}
\newtheorem{proposition}[definition]{Proposition}
\newtheorem{lemma}[definition]{Lemma}
\newtheorem{assumption}[definition]{Assumption}
\def\1B{\text{1\!\!I}}
\def\1B{\text{1\!\!I}}
\def\O{\Omega}
\def\o{\omega}
\def\a{\alpha}
\def\t{\theta}
\def\cal#1{\mathcal{#1}}
\def\mb#1{\mathbf{#1}}
\def\dd{\displaystyle}
\begin{document}

\title{BSDEs with nonlinear weak terminal condition.}%with $g$-conditional expectation}
\author{Roxana DUMITRESCU\thanks{Institut fur Mathematik, Humboldt-Universitat zu Berlin, Unter den Linden 6, 10099 Berlin, Germany, email: {\tt roxana@ceremade.dauphine.fr}. The author takes the opportunity to express her gratitude to  Bruno Bouchard and Romuald Elie for fruitful discussions.}} %The author also addresses many thanks to Prof. Romuald Elie for all the fruitful discussions and useful comments.}}
%\and
%Marie-Claire QUENEZ\thanks{LPMA,
%Universit\'e Paris 7 Denis Diderot, Boite courrier 7012, 75251 Paris cedex 05, France,
%email: {\tt quenez@math.univ-paris-diderot.fr}}
%\and
%Agn\`es SULEM
%\thanks{INRIA Paris-Rocquencourt, Domaine de Voluceau, Rocquencourt, BP 105, Le Chesnay Cedex, 78153, France,  and Universit\'e Paris-Est, email: {\tt agnes.sulem@inria.fr}}}
%

\maketitle

\begin{abstract}
In a recent paper, Bouchard, Elie and Reveillac  \cite{BER} have studied a new class of Backward Stochastic Differential Equations with weak terminal condition, for which the $T$-terminal value $Y_T$ of the solution $(Y,Z)$ is not fixed as a random variable, but only satisfies a constraint of the form
$
E[\Psi(Y_T)] \geq m.
$
The aim of this paper is to introduce a more general class of BSDEs with { \em nonlinear weak terminal condition}. More precisely, the constraint takes the form
$
\mathcal{E}^f_{0,T}[\Psi(Y_T)] \geq m,
$
 where $\mathcal{E}^f$ represents the $f$-conditional expectation associated to   a { \em nonlinear driver} $f$. 
 We carry out a similar analysis as in \cite{BER} of the  value function corresponding to the minimal solution  $Y$ of the BSDE with nonlinear weak terminal condition: we study the  regularity, establish the main properties, in particular continuity and convexity with respect to the parameter $m$, and  finally provide  a dual representation and the existence of an optimal control in the case of concave constraints. From a financial point of view, our study is closely related to  the approximative hedging  of an European option under dynamic risk measures constraints.  The nonlinearity $f$ raises subtle difficulties, highlighted throughout the paper, which cannot be handled by the arguments used in the case of classical expectations constraints studied in \cite{BER}.

%In this paper, we introduce a new class of Backward Stochastic Differential Equations with jumps with nonlinear expectations constaints on the terminal condition, generalizing the recent work of Bouchard-Elie-Réveillac. We emphase the non trivial difficulties due to the nonlinearity describing the constraints on the terminal solution throught the paper.
%We introduce the super replication price of a contingent claim under dynamic risk measures constraints, which are induced by Backward Stochastic Differential Equations with Jumps.
%We study  double barrier reflected BSDEs (DBBSDEs) with jumps and  RCLL barriers, and their links with generalized Dynkin games.  We provide  existence and uniqueness results and prove   that for any Lipschitz driver, the solution of the DBBSDE coincides with  the value function of a game problem, which can be seen as a generalization of the classical Dynkin problem to the case of $g$-conditional expectations. Using this characterization, we  prove some new results on DBBSDEs with jumps, such as comparison theorems and a priori estimates. We then study  DBBSDEs with jumps and RCLL obstacles  in the Markovian case and their links with parabolic partial integro-differential variational inequalities (PIDVI) with two obstacles.
\end{abstract}
%
%\vspace{10mm}
%
%
\noindent{\bf Key words~:}   Backward stochastic differential equations, $g$-expectation, dynamic risk measures, optimal control, stochastic targets.

  \section{Introduction}
Linear backward stochastic differential equations (BSDEs) were introduced by Bismut as the adjoint equations associated with Pontryagin  maximum principles in stochastic control theory. The general case of non-linear BSDEs was then studied by Pardoux and Peng \cite{P}. They provided Feynman-Kac representations  of solutions  of non-linear parabolic partial differential equations.\\
  \quad  The solution of a BSDE consists in a pair of predictable processes $(Y,Z)$ satisfying
  \begin{align}
  -dY_t=g(t,Y_t,Z_t)dt-Z_tdW_t; \,\,\,\,  Y_T=\xi.
  \end{align}
These equations appear as an useful mathematical tool in various problems in finance, for example in the theory of derivatives pricing. In a complete market - when it is possible to construct a portfolio which attains as final wealth the payoff- the value of the replicating portfolio is given by $Y$ and the hedging strategy by $Z$. Since in incomplete markets is not always possible to construct a portfolio which attains exactly as final wealth the amount $\xi$, it was suggested to replace the terminal condition into a weaker one of the form $Y_T \geq \xi.$ In this case, the minimal initial value $Y_0$ defines the smallest initial investment which allows one to superhedge the contingent claim $\xi$.\\
\quad Recently, Bouchard, Elie and Reveillac  \cite{BER}  introduced  a new class of BSDEs, the so called BSDEs with weak terminal condition, in which the $T$-terminal value $Y_T$  only satisfies a weak constraint. More precisely, a couple of predictable processes $(Y,Z)$ is said to be a solution of  such a  BSDE  if it satisfies:
\begin{equation}\label{cond11}
-dY_t=g(t,Y_t,Z_t)dt-Z_tdW_t;
\end{equation}
\begin{equation}\label{cond12}
E[\Psi(Y_T)] \geq m,
\end{equation}
where $m$ is a given threshold and $\Psi$ a non-decreasing map. The main question in \cite{BER} is the following:
\begin{align}\label{initial}
  \text{Find the minimal } Y_0 \text{ such that } \eqref{cond11} \text{ and } \eqref{cond12} \text{ hold for some } Z.
\end{align}
 From a financial point of view, this study is related to the hedging in quantile or more generally  to the hedging with \textit{expected loss constraints}. This problem was addressed in the literature for the first time by F\"{o}llmer and Leukert \cite{FL1} and then  further studied in a Markovian framework in $\cite{BET}$ and $\cite{M}$, using stochastic target techniques .\\
In \cite{BER}, the key point of the analysis is the reformulation of the problem written in terms of BSDE with weak terminal condition  into  an optimization problem on a family of BSDEs with strong terminal condition, by using { \em the martingale representation theorem}. The main observation is that if $Y_0$ and $Z$ are such that \eqref{cond12} holds, then the martingale representation Theorem implies that it exists an element $\alpha \in \textbf{A}_0$, the set of predictable square integrable processes, such that:
  \begin{align}
 \Psi(Y_T) \geq  M_T^{m,\alpha}=m+\int_0^T \alpha_sdW_s,
  \end{align}

 It is then shown that the initial problem \eqref{initial} is equivalent to:
  \begin{align}\label{equivproblem}
   \inf \{Y_0^\alpha, \,\, \alpha \in \textbf{A}_0 \},
   \end{align}
   where   $Y_t^\alpha$ corresponds to the solution at time $t$ of the BSDE with (strong) terminal condition $\Phi(M_T^\alpha)$, $\Phi$ being the left-continuous inverse of $\Psi$.\\

   \quad The aim of this paper is to introduce a new class of { \em BSDEs with weak nonlinear  terminal condition }. We extend the results of \cite{BER} to a more general class of constraints that take the form:
   \begin{align}\label{cond13}
   \mathcal{E}_{0,T}^f[\Psi(Y_T)] \geq m,
   \end{align}
   where $f$ is a nonlinear driver and  $\mathcal{E}^f_{\cdot,T}[\xi]$ the solution of the BSDE with generator $f$ and terminal condition $\xi$.

  We can easily remark that the constraint \eqref{cond12} is a particular case of \eqref{cond13} for $f=0$.\\
   The problem under study in this paper is the following:
   \begin{align}\label{problem}
  \inf \{Y_0 \text{ such that } \exists Z:\, (\ref{cond11}) \text{  and } (\ref{cond13}) \text{ hold}\}.
   \end{align}

   Following the key idea of \cite{BER}, we rewrite our problem \eqref{problem} into an equivalent one expressed in terms of BSDEs with strong terminal condition. The main difference with respect to \cite{BER} is given by the fact that in our case we have to introduce a new controlled diffusion process, which is an {\em $f-$martingale}, contrary to \cite{BER} where it is a classical martingale. Indeed, for a given $Y_0$ and $Z$ such that \eqref{cond11} and \eqref{cond13} are satisfied, appealing to the {\em BSDE representation} of $\Psi(Y_T)$, we can find $\alpha \in \textbf{A}_0$ such that:
   \begin{align}
   \Psi(Y_T) \geq  \mathcal{M}_T^{m,\alpha}=m-\int_0^Tf(s, \mathcal{M}^{m,\alpha}_s,\alpha_s)ds+\int_0^T \alpha_sdW_s.
   \end{align}
  Thanks to this observation,  we show that Problem \eqref{problem} is equivalent to \eqref{equivproblem}, where, in our more general framework,  $Y_t^\alpha$ corresponds to the solution at time $t$ of the BSDE with (strong) terminal condition $\Phi(\mathcal{M}_T^\alpha)$.
We study the dynamical counterpart of \eqref{equivproblem}:
\begin{align}
\mathcal{Y}^\alpha(\tau):= \essinf \{Y_\tau^{\alpha'}, \alpha' \in \textbf{A}_0 \text{ s.t.} \alpha'=\alpha \text{ on } [\![ 0,\tau ]\!]\}.
\end{align}
We carry out a similar analysis as in \cite{BER} of the family $\{\mathcal{Y}^\alpha, \,\, \alpha \in \textbf{A}_0\}.$ We start by studying the regularity of the family $\mathcal{Y}^\alpha$ and show that it can be aggregated into a RCLL process, proof which becomes considerably more technical in our context with respect to \cite{BER}, because we have to deal with the nonlinearity $f$. We then provide a BSDE representation of $\mathcal{Y}^\alpha$ and show that, under a concavity assumption on the driver $f$, there exists an optimal control. We also study the main properties of the value function, as continuity and convexity with respect to the threshold $m$, and propose proofs specific to the nonlinear case.We finally get, in the case of concave constraints, a dual representation of the value function,  related to a stochastic control problem in Meyer's form.  We point out that the techniques used in \cite{BER} cannot be adapted to our framework. 

%  The additional nonlinearity $f$ raises important and subtle technical difficulties, since  most of the results in \cite{BER}  are provided  using specific techniques to the linear case, which cannot be adapted to the nonlinear one.
  Besides the mathematical interest of our study, this work is also motivated by some financial applications, as it provides  the $\textit{approximative hedging  under dynamic risk}$\\  $\textit{measures contraints }$ of an European option, when the shortfall risk is quantified in terms of dynamic risk measures induced by BSDEs (see e.g. \cite{BEK}, \cite{15}).
  
%  , that is:
%  \begin{align}
%   \inf \{Y_0  \text{ such that } \exists \,\, Z: \,\,\rho_{0,T}[-(Y_T-\xi)^-]\leq m   \},
%   \end{align}
%
%  where $\rho_{t,T}[\xi]$ represents the risk measure at time $t$ of  $\xi$ and it is defined as $-\mathcal{E}^f_{t,T}[\xi]$. \\

  %  in  or
  The paper is organized as follows. In Section 2 we introduce notation, assumptions and the BSDEs with nonlinear weak terminal condition. In Section 3, we study the regularity and the BSDE representation of the value function $\mathcal{Y}^\alpha$. In Section 4, we provide the existence of an optimal control under some additional assumptions on the coefficients. In Section 5,  we establish the main properties of the value function and we finally  provide a dual representation in Section 6. 
\section{Problem formulation}
\subsection{Notation}
Let $(\Omega,\mathcal{F}, \mathbb{P})$ be a probability space supporting a  $d$-dimensional Brownian motion $W$ and  $\mathbb{F}:=(\mathcal{F}_t)_{t \leq T}$ the completed associated filtration. Fix $T>0.$

In the sequel, we adopt the following notation:

\begin{itemize}

\item[$-$] $\mathcal{P}$ denotes the predictable $\sigma$-algebra on $[0,T] \times \Omega;$

\item[$-$] For any $\sigma$-algebra $\mathcal{G} \subset \mathcal{F}_T$,  $\mathbf{L}_2(\mathcal{G})$ is the set of random variables $\xi$ which are $\mathcal{G}$-measurable and {square}-integrable;

\item[$-$]

$\mathbf{H}_2$ denotes the set of $\mathbf{R}^d$-valued predictable processes $\phi$ such that\\ $\|\phi\|^{2}_{\mathbf{H}_2}: = E[(\int_{0}^{T}\phi_{t}^{2}dt)] <\infty$;

\item[$-$]
$\mathbf{S}_2$ is the set of real-valued RCLL adapted processes $\phi$ such that\\ $\|\phi\|_{\textbf{S}^{2}}^{2} : = E [\sup_{0\leq t\leq T} |\phi_{t}|^{2}]<\infty$;

\item[$-$]
$\mathbf{I}_2$ is the set of non-decreasing adapted processes $\phi$ such that $\|\phi\|_{\textbf{S}^{2}}^{2} <\infty$;

\item[$-$]
For any $\sigma$-algebra $\mathcal{G} \subset \mathcal{F}_T$, $\textbf{L}_0(\mathcal{G})$ denotes the set of random variables measurable with respect to $\mathcal{G}$;

%\item[$\bullet$]
%For any $\sigma$-algebra $\mathcal{G} \subset \mathcal{F}_T$, $\textbf{L}^-(\mathcal{G})$ is the set of random variables $\eta$ which are  measurable with respect to $\mathcal{G}$ and such that $\textbf{E}[\eta^-]=+\infty$ and $\textbf{E}[\eta^+]< +\infty;$

\item[$-$] $\mathcal{T}$ denotes the set of stopping times $\tau$ such that $\tau \in [0,T]$ a.s.

\end{itemize}

\subsection{BSDEs with nonlinear weak terminal condition.}
\subsubsection{Definition and Assumptions.}

\quad In this section, we introduce the main object of this paper, the \textit{BSDEs with nonlinear weak  terminal condition.}\\
\quad It is well known that, in the  case of nonlinear backward stochastic differential equations\\ ( in short BSDEs) introduced by Pardoux-Peng, the data of the BSDE is represented by a {\em driver} $g$ and a {\em terminal condition } $\xi$.

\quad In the recent paper \cite{BER}, the authors define a new class of BSDEs called $\textit{BSDEs with weak}$\\ $\textit{terminal condition}$. The particularity consists in the fact that the terminal condition is not fixed as a $\mathcal{F}_T$-measurable random variable, but only satisfies a weak constraint expressed in terms of classical expectations. The data of this class of BSDEs is given by four elements: a  {\em driver} $g$ and a triplet $(\Psi,\mu,\tau)$ describing  the constraint on the terminal condition.

\quad The aim of this work is to introduce a more general class of BSDEs, named $\textit{BSDEs with }$ $\textit{nonlinear weak terminal condition}$, whose terminal value verifies a weak constraint defined via a BSDE with a nonlinear driver $f$, satisfying the following hypothesis:

\begin{assumption}\label{f}
Let
$f:(\omega,t,y,z) \in \Omega \times [0,T] \times \mathbf{R} \times \mathbf{R}^d \mapsto f_t(\omega,y,z) \in \mathbf{R}$  be a driver such that $\left(f_t(\cdot,y,z)\right)_{t \leq T}$ is $\mathcal{P}$-measurable for every $(y,z) \in \mathbf{R} \times \mathbf{R}^d$ and
$$\left|f_t(\omega,y,z)-f_t(\omega,y',z')\right| \leq C_f \left(|y-y'|+\|z-z'\|_{\mathbf{R}^d}\right),$$
$\forall (y,z), (y',z') \in \mathbf{R} \times \mathbf{R}^d$, for  $dt \otimes dP$-a.e. $(t,\omega) \in [0,T] \times \Omega$, for some constant number $C_f>0.$\\
We also assume that $f$ satisfies the following condition
$$\textbf{E}\left[\int_0^T |f_t(0,0)|^2 dt\right]< \infty.$$

\end{assumption}
Note that the data of this new BSDE are $(f,\Psi,\mu,\tau,g)$ and  the particular case when $f=0$ corresponds to the class of BSDEs studied in \cite{BER}. In the sequel, we shall denote the BSDE with nonlinear weak terminal condition by $BSDE(f,\Psi,\mu,\tau,g)$.\\
\quad Before defining this new mathematical object, we   introduce the nonlinear conditional expectation $\mathcal{E}^f$ associated with $f$, defined for each stopping time $\tau \in \mathcal{T}$ and for each $\eta \in \textbf{L}_2(\mathcal{F}_\tau)$ as:
\begin{align}
\mathcal{E}^{f}_{t,\tau}[\eta]:=Y_r, \quad \quad 0 \leq t \leq \tau,
\end{align}
where $(Y_t)_{t \leq \tau}$ is the unique solution in $\textbf{S}_2$ of the BSDE associated with driver $f$, terminal time $\tau$ and terminal condition $\eta$, that is satisfying:
\begin{align}
\begin{cases}
-dY_t= f(t,Y_t,Z_t)dt-Z_t dW_t;\\
Y_\tau=\eta,
\end{cases}
\end{align}
with $Z$ the associated process belonging to $\textbf{H}_2$.
 Moreover, set $\mathcal{E}_{\sigma,\tau}^f[\eta]:= -\infty,$ for any $\eta \in \mathbf{L}_0(\mathcal{F}_\tau)$ such that $\textbf{E}[\eta^-]=+\infty$, where $\sigma \in \mathcal{T}$ with $\sigma \leq \tau$ a.s.
%\end{definition}

We are now in position to define the so-called \textit{BSDEs with nonlinear weak terminal condition}.
\begin{definition}[BSDEs with nonlinear weak terminal condition]\rm
Given a measurable map $\Psi: \textbf{R}\times \Omega \rightarrow U$, with $U \subset \textbf{A} \cup \{  -\infty\}$, $\textbf{A}$ a bounded subset of $\mathbf{R}$,  $\tau \in \mathcal{T}$, $\mu \in \textbf{L}_0(\textbf{R}, \mathcal{F}_\tau)$, a driver $f$  satisfying Assumption $\ref{f}$ and a measurable function $g$, we say that $(Y,Z) \in \mathbf{S}_2\times \mathbf{H}_2$ is a solution of the  BSDE $\left( f,\Psi,\mu,\tau,g \right)$ if
\begin{align}\label{Equ1}
\dd Y_t = Y_{T}+ \int_{t}^{T}g(s, Y_{s}, Z_{s})ds - \int_{t}^{T}Z_{s}dW_{s},\,\, 0 \leq t \leq T;
\end{align}
\begin{equation}\label{Equ2}
\cal{E}_{\tau,T}^{f}[\Psi(Y_{T})] \geq \mu.
\end{equation}
\end{definition}

Throughout the paper, we shall assume that the driver $g$ satisfies Assumption $\ref{f}$, with $C_g$ instead of $C_f$. To the coefficient $g$, we associate the nonlinear operator $\mathcal{E}^g$ defined as $\mathcal{E}^f,$ with $f$ replaced by $g$.\\
Let us now precise the hypothesis on the map $\Psi$ and the threshold $\mu$. We then discuss the wellposedness of the $BSDE(f,\Psi,\mu, \tau,g)$ under these assumptions. 

\begin{assumption}
For a.e. $\o\in  \O,$ the map $y \in \mathbf{R} \to \Psi(\o, y)$ is non-decreasing and valued in $[0; 1]\cup \{-\infty\}$ and 
its right-inverse $\Phi(\o, \cdot)$ is such that $\Phi:\O \times [0,1]\to [0,1]$ and it is measurable.
\end{assumption}

 This means that $\Psi(\omega,\cdot) \in [0,1]$ on $[0, \infty)$ and $\Psi(\omega,\cdot)=-\infty$ on $(-\infty,0)$. In view of the definition of the operator $\mathcal{E}^f,$ this implies that $Y_T \geq 0$ a.s.
Note that for notational simplicity we have considered the compact $[0,1]$, as in $\cite{BER}$, which can   be obviously replaced by an arbitrary compact set belonging to $\mathbf{R}$. Moreover, our analysis is the same if  for a.e. $\omega$ the map $\Psi(\omega,\cdot)$ is valued in   $[G_1(\omega), G_2(\omega)]$, with $G_1,G_2 \in \textbf{L}_2(\mathcal{F}_T).$

The threshold  $\mu$ is assumed to belong to $ \textbf{D}_\tau,$  where   $\textbf{D}_\tau$ corresponds to the set of random variables $ \{ \eta \in \textbf{L}_2(\mathcal{F}_\tau)\,\,\, \text{such that } \eta \in [\mathcal{E}^f_{\tau,T}[0], \mathcal{E}_{\tau,T}^f[1] ]\,  \text{ a.s.}\}.$We now introduce the following definition:

\begin{definition}\label{deff}

Let $f$ be a driver satisfying Assumption \ref{f}. For $i=1,2$, the solution of the BSDE associated to $f$ and $\xi^{i}$, with $\xi^{1}=0$ and $\xi^{2}=1$ is denoted by $(Y^{i}, Z^{i})$.
\end{definition}

Concerning the existence of a solution, remark that any random variable $\Phi(\xi)$, with $\xi \in [0,1]$ a.s. and $\mathcal{E}^f_{\tau,T}[\xi] \geq \mu$ could serve as terminal condition. However, the constraint is too weak to expect uniqueness.\\

We now introduce the value function $\mathcal{V}: \textbf{D} \rightarrow \textbf{L}_2$; $(\tau, \mu) \rightarrow \mathcal{V}(\tau,\mu)$, where $\textbf{D}:= \{(\tau, \mu); \,\, \tau \in \mathcal{T} \text{ and } \mu \in \textbf{D}_\tau  \}$ as follows:
\begin{align}\label{value!!}
\mathcal{V}(\tau,\mu):= \essinf \{Y_\tau: (Y,Z) \in \mathbf{S}_2 \times \textbf{H}_2  \text{ is a solution of BSDE} (f ,\Psi, \mu, \tau,g)\}.
\end{align}

The rest of the paper is dedicated to the study of the above map. In order to do it, we shall first establish the link with a control problem for BSDEs with strong terminal condition.

%The problem of approximative hedging has been studied in the literature using the probability, the expectation of a loss function to quantify the shortfall risk.
%The study of this new class of BSDEs opens the door to a new approach
%
%of interest in finance, since the value function of our problem corresponds to the approximative hedging price under dynamic risk measures constraints of an European option.\\
%More precisely, let us consider the case of the seller of an European option with payoff $G$ which accepts to take some risk in order to  the price, which is quantified in terms of dynamic risk measures induced by BSDEs.

\subsubsection{Link with a control problem for BSDEs with strong terminal condition.}

In the spirit of   $\cite{BER}$ or \cite{BET}, we introduce an additional process $\mathcal{M}$ which allows to transform the weak constraint $\mathcal{E}_{0,T}^f[\Psi(Y_T)] \geq \mu$ into a strong one of the form $Y_T \geq \Phi(\mathcal{M}_T^{\mu})$. Since our constraint is expressed in terms of nonlinear BSDEs, the  process $\mathcal{M}$ is an $f$-martingale, contrary to  \cite{BER} and  \cite{BET} where $\mathcal{M}$ is a classical martingale.\\
\quad For each $\alpha \in \textbf{H}_2$, stopping time $\tau \in \mathcal{T}$ and $\mu \in \textbf{D}_\tau$, let $\mathcal{M}^{\tau,\mu, \alpha}$ be the $\mathbf{R}$-valued solution of the SDE:
\begin{align*}
\mathcal{M}_{t \vee \tau}^{\tau,\mu, \alpha}= \mu-\int_{\tau}^{t \vee \tau} f(s,\mathcal{M}_s^{\tau,\mu, \alpha},\alpha_s)ds+\int_\tau^{t \vee \tau} \alpha_s ^\top dW_s, \,\,\, 0 \leq t \leq T.
\end{align*}

%\begin{remark}
%
% Let $\xi \in \textbf{L}_2$. Note that $\mathcal{E}_{\tau,T}^f[\Psi(\xi)] \geq \mu$ is equivalent to $\Psi(\xi) \geq \mathcal{M}_T^{\mu, \alpha,\beta} $ a.s., with $(\alpha,\beta)\in \textbf{H}^2 \times \textbf{H}^2_\nu$ the associated processes to the BSDE representation of $\Psi(\xi)$. The sense $\Psi(\xi) \geq \mathcal{M}_T^{\mu, \alpha,\beta} \Rightarrow \mathcal{E}_{\tau,T}^f[\Psi(\xi)] \geq \mu$ derives directly from the comparison theorem for BSDEs. In order to show the converse inequality, we use Lemma A.1.\\
%\end{remark}

We introduce the set of admissible controls $\textbf{A}_{\tau,\mu}$, which is defined as follows:
$$\textbf{A}_{\tau,\mu}:= \{\alpha \in \textbf{H}_2 \text{ such that  } \mathcal{M}^{\tau, \mu,\alpha} \in [Y^0,Y^1] \text{ on } \![\![\tau,T]\!]  \}.$$

Notice that for all $\alpha \in \textbf{A}_{\tau,\mu}$, $\Phi(\mathcal{M}_T^{\tau,\mu, \alpha})$ could serve as  terminal condition, since satisfies $\eqref{Equ2}.$ We thus introduce for all $\alpha \in \textbf{A}_{\tau,\mu}$   the BSDE with strong condtion $\Phi(\mathcal{M}_T^{\tau, \mu, \alpha})$ and driver $g$ and  define the value function $\mathcal{Y}(\tau,\mu)$ as follows:
\begin{equation}\label{probl}
\mathcal{Y}(\tau, \mu):= \essinf_{\alpha \in \textbf{A}_{\tau,\mu}} \mathcal{E}_{\tau,T}^g[\Phi(\mathcal{M}_T^{\tau, \mu,\alpha})].
\end{equation}

Our aim now is to link $\mathcal{Y}(\tau, \mu)$ to $\mathcal{V}(\tau,\mu)$, i.e. to prove that for all $\tau \in \mathcal{T}$ and  $\mu \in \textbf{D}_\tau$:
\begin{align}\label{erq}
\mathcal{V}(\tau,\mu)=\mathcal{Y}(\tau,\mu)\,\, \text { a.s.}
\end{align}
In order to explain the above equality between $\mathcal{V}$ and $\mathcal{Y}$, we state the following proposition:
\begin{proposition}\label{link}
Fix $\tau\in \cal{T},$ $\mu\in \textbf{D}_{\tau}$. Then $(Y,Z)\in \mathbf{S}_2\times \mathbf{H}_2$
is a solution of $BSDE (f,\Psi,\mu, \tau,g)$ if and only if $(Y,Z)$ satisfies \eqref{Equ1} and there exists
$\a \in \mb{A}_{\tau, \mu}$ such that $Y_{t}\geq \cal{E}_{t,T}^{g}[\Phi(\cal{M}_{T}^{\tau,\mu,\a})]$
for $t\in [0,T],$ $ \mathbb{P}$-a.s.
\end{proposition}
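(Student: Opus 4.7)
The plan is to establish both implications separately, the common thread being the reduction of the nonlinear weak constraint $\mathcal{E}^f_{\tau,T}[\Psi(Y_T)]\geq\mu$ to a pathwise terminal inequality $Y_T\geq\Phi(\mathcal{M}_T^{\tau,\mu,\alpha})$, which can then be propagated backwards in time by the comparison theorem for the $g$-BSDE.

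For the direct implication, suppose $(Y,Z)$ solves $BSDE(f,\Psi,\mu,\tau,g)$. From $\mu\in\mathbf{D}_{\tau}$ and \eqref{Equ2} I have the sandwich $\mathcal{E}^f_{\tau,T}[0]\leq\mu\leq \mathcal{E}^f_{\tau,T}[\Psi(Y_T)]$ a.s. My first step would be to prove an intermediate-value statement for $\mathcal{E}^f$: there exists $\xi\in\mathbf{L}_2(\mathcal{F}_T)$ with $0\leq \xi\leq \Psi(Y_T)$ and $\mathcal{E}^f_{\tau,T}[\xi]=\mu$ a.s. The natural candidate is $\xi=\Psi(Y_T)\wedge c$ for a suitable $\mathcal{F}_\tau$-measurable $c\in[0,1]$, obtained by measurable selection applied to the pathwise continuous, non-decreasing map $c\mapsto \mathcal{E}^f_{\tau,T}[\Psi(Y_T)\wedge c]$ (continuity being a consequence of the standard Lipschitz stability estimates for $f$-BSDEs under Assumption \ref{f}). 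I would then let $(\mathcal{M},\alpha)\in\mathbf{S}_2\times\mathbf{H}_2$ be the solution on $[\tau,T]$ of the $f$-BSDE with terminal condition $\xi$; by construction $\mathcal{M}_\tau=\mu$, so $\mathcal{M}=\mathcal{M}^{\tau,\mu,\alpha}$ and $\mathcal{M}_T^{\tau,\mu,\alpha}=\xi\leq\Psi(Y_T)$. Because $0\leq \xi\leq 1$, the comparison theorem for $f$-BSDEs together with Definition \ref{deff} yields $Y^0_t\leq \mathcal{M}_t^{\tau,\mu,\alpha}\leq Y^1_t$ on $[\![\tau,T]\!]$, so $\alpha\in\mathbf{A}_{\tau,\mu}$. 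Using the right-inverse identity $\Phi(\Psi(y))\leq y$ and the monotonicity of $\Phi$, this gives $\Phi(\mathcal{M}_T^{\tau,\mu,\alpha})\leq Y_T$; comparison for the $g$-BSDE then delivers $Y_t=\mathcal{E}^g_{t,T}[Y_T]\geq \mathcal{E}^g_{t,T}[\Phi(\mathcal{M}_T^{\tau,\mu,\alpha})]$ for every $t\in[0,T]$.

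The converse is the easy half. Evaluating the given inequality at $t=T$ yields $Y_T\geq\Phi(\mathcal{M}_T^{\tau,\mu,\alpha})$; applying the non-decreasing $\Psi$ and using $\Psi\circ\Phi\geq\mathrm{id}$ gives $\Psi(Y_T)\geq \mathcal{M}_T^{\tau,\mu,\alpha}$. Since the forward SDE defining $\mathcal{M}^{\tau,\mu,\alpha}$ is exactly the $f$-BSDE with terminal datum $\mathcal{M}_T^{\tau,\mu,\alpha}$ and value $\mu$ at $\tau$, comparison for $\mathcal{E}^f$ then produces $\mathcal{E}^f_{\tau,T}[\Psi(Y_T)]\geq \mathcal{E}^f_{\tau,T}[\mathcal{M}_T^{\tau,\mu,\alpha}]=\mathcal{M}_\tau^{\tau,\mu,\alpha}=\mu$, which is precisely \eqref{Equ2}.

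I expect the main obstacle to be the intermediate-value step. In \cite{BER} one exploits the linearity of classical conditional expectation to simply subtract the non-negative $\mathcal{F}_\tau$-measurable slack $E[\Psi(Y_T)\mid\mathcal{F}_\tau]-\mu$ from the martingale generated by $\Psi(Y_T)$, producing a martingale starting at $\mu$ and ending below $\Psi(Y_T)$. In our nonlinear framework an $f$-BSDE cannot be translated by a constant without modifying its driver, so this shortcut breaks down; producing an $\mathcal{F}_T$-measurable $\xi\leq\Psi(Y_T)$ whose $f$-conditional expectation \emph{equals} $\mu$ requires the genuine continuity-and-measurable-selection argument sketched above, and this is the core new technical ingredient due to the nonlinearity of $f$.
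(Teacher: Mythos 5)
Your proof is correct, but the forward implication follows a genuinely different route from the paper's. The paper does \emph{not} construct an exact intermediate terminal value: it takes the control $\bar\alpha$ furnished by the BSDE representation of $\Psi(Y_T)$ (so that $\Psi(Y_T)=\mathcal{M}_T^{\tau,\rho,\bar\alpha}$ with $\rho:=\mathcal{E}^f_{\tau,T}[\Psi(Y_T)]\geq\mu$), restarts the controlled forward $f$-dynamics from the \emph{lower} initial level $\mu$ with the \emph{same} control, uses comparison of the flows to get $\mathcal{M}^{\tau,\mu,\bar\alpha}\leq\mathcal{M}^{\tau,\rho,\bar\alpha}$, and then enforces admissibility by switching to the control $Z^0$ at the hitting time of the lower barrier $\mathcal{E}^f_{\cdot,T}[0]$ — the same barrier-truncation device used throughout the paper (e.g.\ in the monotonicity lemma). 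This yields $\mathcal{M}_T^{\tau,\mu,\tilde\alpha}\leq\Psi(Y_T)$ directly, with no measurable selection. Your alternative — producing $\xi=\Psi(Y_T)\wedge c$ with $\mathcal{E}^f_{\tau,T}[\xi]=\mu$ exactly and taking $\alpha$ to be the associated BSDE control — is conceptually cleaner (the constraint is saturated at $\tau$ and admissibility is immediate from $0\leq\xi\leq 1$ by comparison), and you correctly identify that the linear "subtract the slack" trick of \cite{BER} is unavailable. The price is the machinery you only sketch: one needs a jointly measurable, a.s.\ continuous and monotone version of $(\omega,c)\mapsto\mathcal{E}^f_{\tau,T}[\Psi(Y_T)\wedge c](\omega)$, a measurable selection of the level $c(\omega)$, and crucially the substitution property $\mathcal{E}^f_{\tau,T}[\Psi(Y_T)\wedge c(\cdot)]=F(\cdot,c(\cdot))$ for an $\mathcal{F}_\tau$-measurable random $c$ — the latter follows from the locality (zero-one law) of $f$-expectations for simple $c$ plus the stability estimate in the limit, but it is a nontrivial step that should be written out. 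With that step supplied, your argument is complete; the converse direction coincides with the paper's.
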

A sketch of proof is given in Appendix.

We come back to the explanation of equality \eqref{erq}.
\begin{itemize}
\item[$(i)$] Let $(Y,Z) \in \mathbf{S}_2 \times \textbf{H}_2$ be a solution of the BSDE$(f,\Psi,\mu,\tau,g)$. Then the above Proposition implies that it exists $\alpha \in \textbf{A}_{\tau,\mu}$ such that $Y_\tau \geq \mathcal{E}_{\tau,T}[\Phi(\mathcal{M}^{\tau,\mu,\alpha}_T)] \geq \mathcal{Y}(\tau,\mu),$ where the last inequality follows from  definition $\eqref{probl}$. By arbitrariness of $(Y,Z)$, we get $\mathcal{V}(\tau,\mu) \geq \mathcal{Y}(\tau,\mu)\,\, \text { a.s.}$
\item[$(ii)$] Fix $\alpha \in \textbf{A}_{\tau,\mu}$. Let $Z^\alpha$ be the associated process to the BSDE representation of $\Phi(\mathcal{M}^{\tau,\mu,\alpha}_T).$ Since $\Phi(\mathcal{M}^{\tau,\mu,\alpha}_T)$ is admissible as a  terminal condition, we obtain, by  Proposition \ref{link} that $(\mathcal{E}_{\cdot,T}[\Phi(\mathcal{M}^{\tau,\mu,\alpha})], Z^\alpha)$ is a solution, and thus $\mathcal{E}_{\tau,T}[\Phi(\mathcal{M}^{\tau,\mu,\alpha})] \geq \mathcal{V}(\tau,\mu).$ By arbitrariness of $\alpha$, we deduce
$\mathcal{V}(\tau,\mu) \leq \mathcal{Y}(\tau, \mu)\,\, \text { a.s.}$
\end{itemize}

From now on, we fix an initial condition $\mu_0 \in \textbf{D}_0$ at time $0$.  For each $\alpha \in \textbf{A}_{0, \mu_0}$ (denoted for simplicity $\textbf{A}_0$), we introduce the  process $(\mathcal{M}_t^{\alpha})_{t \leq T}$, representing a dynamic threshold controlled by the action of $\alpha$, which is defined as follows: 

$$\mathcal{M}_t^{\alpha}:= \mathcal{M}_t^{0,\mu_0, \alpha}.$$ 
We introduce for each $\tau \in \mathcal{T}$ the set of admissible controls coinciding with $\alpha$ up to the stopping  time $\tau$:
$$ \textbf{A}_{\tau}^{\alpha}:= \{\alpha' \in \textbf{A}_{\tau, \mathcal{M}_{\tau}^{\alpha}}: \alpha'=\alpha \, dt \otimes dP \,  \text{ on }  [\![0, \tau]\!]  \}.$$ 
The associated value is defined by:
$$\mathcal{Y}^{\alpha}(\tau){:=} \essinf_{\alpha' \in \textbf{A}_\tau^\alpha } \mathcal{E}_{\tau,T}^g[\Phi(\mathcal{M}_T^{\alpha'})](=\mathcal{Y}(\tau, \mathcal{M}_\tau^\alpha)).$$
In the following section, we shall investigate the time regularity of the above function and provide a BSDE representation. Before doing this, note that 
\begin{align}\label{B}
  |\mathcal{Y}^\alpha(\tau)|\leq \eta_\tau \text{ a.s. for all } \tau \in \mathcal{T}, 
\end{align}
$\text{where } \eta \text{ belongs to }  \mathbf{S}_2$  and it is given by  $\eta_t:=|\mathcal{E}_{t,T}^g[\Phi(1)]|+|\mathcal{E}_{t,T}^g[\Phi(0)]|,$ $t \leq T$.

\section{ Time regularity of the value function $\mathcal{Y}^\alpha$ and BSDE representation }

In this section, we study the regularity of the family $\{ \mathcal{Y}^\alpha(\tau), \tau \in \mathcal{T} \}$. More precisely, we show that it can be aggregated into a right continuous left limited process. The proof of this result becomes considerably more technical in our nonlinear case. Some comments regarding the main difficulties with respect to the case of linear constraints are provided in Remark \ref{comments}.

We first state the following dynamic programming principle.

\begin{lemma}\label{DPP}
For any $\a \in \textbf{A}_{0}$, ${\cal{Y}}^{\a}$ satisfies  the following dynamic programming principle: for all $\tau_1 \in \mathcal{T}$, $\tau_2 \in \mathcal{T}$ with $\tau_1 \leq \tau_2$ a.s. it holds:
\begin{align*}
{\cal{Y}}^{\a}({\tau_{1}})= \essinf_{\overline{\a}\in \textbf{A}_{\tau_1}^{\a}} \cal{E}_{\tau_{1},\tau_{2}}^{g}[\cal{Y}^{\overline{\a}}(\tau_{2})].
\end{align*}
\end{lemma}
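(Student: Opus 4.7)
The plan is to prove both inequalities in the DPP separately, relying on the flow (consistency) property of the $g$-expectation, a pasting argument establishing downward directedness of the relevant family of controls, and the monotone stability of BSDE solutions to exchange $\essinf$ with $\mathcal{E}^g$.

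For the inequality $\mathcal{Y}^\alpha(\tau_1) \geq \essinf_{\bar\alpha \in \textbf{A}_{\tau_1}^\alpha}\mathcal{E}^g_{\tau_1,\tau_2}[\mathcal{Y}^{\bar\alpha}(\tau_2)]$, I would fix an arbitrary $\tilde\alpha \in \textbf{A}_{\tau_1}^\alpha$. Since $\tilde\alpha$ coincides with itself on $[\![0,\tau_2]\!]$, we have $\tilde\alpha \in \textbf{A}_{\tau_2}^{\tilde\alpha}$, so $\mathcal{E}^g_{\tau_2,T}[\Phi(\mathcal{M}_T^{\tilde\alpha})] \geq \mathcal{Y}^{\tilde\alpha}(\tau_2)$. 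Applying the flow property $\mathcal{E}^g_{\tau_1,T} = \mathcal{E}^g_{\tau_1,\tau_2}\circ \mathcal{E}^g_{\tau_2,T}$ together with monotonicity of the $g$-expectation (comparison theorem), I get $\mathcal{E}^g_{\tau_1,T}[\Phi(\mathcal{M}_T^{\tilde\alpha})] \geq \mathcal{E}^g_{\tau_1,\tau_2}[\mathcal{Y}^{\tilde\alpha}(\tau_2)] \geq \essinf_{\bar\alpha \in \textbf{A}_{\tau_1}^\alpha}\mathcal{E}^g_{\tau_1,\tau_2}[\mathcal{Y}^{\bar\alpha}(\tau_2)]$, and taking $\essinf$ over $\tilde\alpha$ yields the claim.

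The reverse inequality is the delicate one and is where the nonlinearity of $f$ and $g$ bites. Fix $\bar\alpha \in \textbf{A}_{\tau_1}^\alpha$. For any $\alpha' \in \textbf{A}_{\tau_2}^{\bar\alpha}$, concatenation of definitions gives $\alpha' \in \textbf{A}_{\tau_1}^\alpha$, so by definition of $\mathcal{Y}^\alpha(\tau_1)$ and the flow property, $\mathcal{Y}^\alpha(\tau_1) \leq \mathcal{E}^g_{\tau_1,\tau_2}[\mathcal{E}^g_{\tau_2,T}[\Phi(\mathcal{M}_T^{\alpha'})]]$. To push the $\essinf$ over $\alpha'$ inside the outer $\mathcal{E}^g_{\tau_1,\tau_2}$, I would show that the family $\{\mathcal{E}^g_{\tau_2,T}[\Phi(\mathcal{M}_T^{\alpha'})] : \alpha' \in \textbf{A}_{\tau_2}^{\bar\alpha}\}$ is stable under pairwise minimization. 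Given $\alpha'_1,\alpha'_2 \in \textbf{A}_{\tau_2}^{\bar\alpha}$, let $A := \{\mathcal{E}^g_{\tau_2,T}[\Phi(\mathcal{M}_T^{\alpha'_1})] \leq \mathcal{E}^g_{\tau_2,T}[\Phi(\mathcal{M}_T^{\alpha'_2})]\} \in \mathcal{F}_{\tau_2}$ and set $\alpha'_3 := \alpha'_1 \mathbf{1}_A + \alpha'_2 \mathbf{1}_{A^c}$ on $[\![\tau_2,T]\!]$ and $\alpha'_3 := \bar\alpha$ on $[\![0,\tau_2]\!]$. Since the controlled SDE for $\mathcal{M}$ starts at $\mathcal{M}^{\bar\alpha}_{\tau_2}$ on both sides of $A$ and $A$ is $\mathcal{F}_{\tau_2}$-measurable, pathwise uniqueness gives $\mathcal{M}^{\alpha'_3}_\cdot = \mathcal{M}^{\alpha'_1}_\cdot \mathbf{1}_A + \mathcal{M}^{\alpha'_2}_\cdot \mathbf{1}_{A^c}$ on $[\![\tau_2,T]\!]$, and the same type of pasting applied at the level of the BSDE driven by $g$ (again using $A \in \mathcal{F}_{\tau_2}$) yields $\mathcal{E}^g_{\tau_2,T}[\Phi(\mathcal{M}_T^{\alpha'_3})] = \mathcal{E}^g_{\tau_2,T}[\Phi(\mathcal{M}_T^{\alpha'_1})] \wedge \mathcal{E}^g_{\tau_2,T}[\Phi(\mathcal{M}_T^{\alpha'_2})]$. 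One also checks $\alpha'_3 \in \textbf{A}_{\tau_2}^{\bar\alpha}$ via the same pathwise decomposition and the admissibility of $\alpha'_1,\alpha'_2$.

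Downward directedness then gives a sequence $(\alpha'_n) \subset \textbf{A}_{\tau_2}^{\bar\alpha}$ with $\mathcal{E}^g_{\tau_2,T}[\Phi(\mathcal{M}_T^{\alpha'_n})] \downarrow \mathcal{Y}^{\bar\alpha}(\tau_2)$ a.s. The uniform bound \eqref{B} together with boundedness of $\Phi$ lets me invoke the standard monotone/dominated stability of BSDEs with Lipschitz driver $g$ to conclude $\mathcal{E}^g_{\tau_1,\tau_2}[\mathcal{E}^g_{\tau_2,T}[\Phi(\mathcal{M}_T^{\alpha'_n})]] \to \mathcal{E}^g_{\tau_1,\tau_2}[\mathcal{Y}^{\bar\alpha}(\tau_2)]$ in $\mathbf{L}_2$. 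Combined with the previous step this gives $\mathcal{Y}^\alpha(\tau_1) \leq \mathcal{E}^g_{\tau_1,\tau_2}[\mathcal{Y}^{\bar\alpha}(\tau_2)]$, and then an $\essinf$ over $\bar\alpha \in \textbf{A}_{\tau_1}^\alpha$ finishes the proof. The main obstacle is the pasting/stability step: unlike the linear case of \cite{BER} where pasting is immediate from the tower property of conditional expectations, here one must verify pasting both for the $f$-controlled SDE defining $\mathcal{M}$ and for the nonlinear operator $\mathcal{E}^g$, and this only works cleanly because $A \in \mathcal{F}_{\tau_2}$, which is exactly what makes the $g$-expectation local at time $\tau_2$.
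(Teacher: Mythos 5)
Your proof is correct and follows essentially the same route the paper intends: the paper omits the argument, deferring to the ``classical arguments'' of \cite{BER}, and those are exactly your two inequalities --- the easy direction via the flow property and monotonicity of $\mathcal{E}^g$, and the harder direction via downward directedness of $\{\mathcal{E}^g_{\tau_2,T}[\Phi(\mathcal{M}_T^{\alpha'})]:\alpha'\in \textbf{A}_{\tau_2}^{\bar\alpha}\}$ (which is the paper's own Appendix Lemma \ref{sequence}) combined with the a priori $\mathbf{L}_2$-stability of the BSDE and the bound \eqref{B}. Your explicit verification that the pasting works for both the controlled $f$-SDE and the operator $\mathcal{E}^g$ because the splitting set lies in $\mathcal{F}_{\tau_2}$ is precisely the point where the nonlinear case requires care, and it is handled correctly.
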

Since the proof of the dynamic programming principle is based on classical arguments, we  refer the reader to \cite{BER}.\\
\quad We now make the following hypothesis on the map $\Phi$, under which we provide the time-regularity of our value function $\mathcal{Y}^\alpha$.
\begin{assumption}\label{continuity}
 The map  $m\in [0,1]\to \Phi(\o,m)$ is continuous for a.e. $\o\in \O$.
 \end{assumption}
 
 \begin{theorem} \label{cadlag}

Under the Assumption $\ref{continuity}$, for each $\alpha \in \textbf{A}_0$, there exists a right-continuous left limited process $(\overline{Y}^\alpha_t)_{t \leq T}$ which aggregates the family $\{\mathcal{Y}^\alpha(\tau), \,\, \tau \in \mathcal{T}\}.$
\end{theorem}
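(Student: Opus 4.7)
The plan is to follow the classical programme for $g$-supermartingale systems indexed by stopping times: use the dynamic programming principle (Lemma \ref{DPP}) to exhibit the $g$-supermartingale property of $\mathcal{Y}^\alpha$, verify right-continuity of the associated expectation function, and then invoke a Peng-type aggregation theorem.

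\textbf{Step 1 ($g$-supermartingale property).} Since $\alpha$ itself lies in $\mathbf{A}_{\tau_1}^\alpha$ for every $\tau_1 \in \mathcal{T}$, specialising the DPP to $\bar\alpha = \alpha$ gives $\mathcal{Y}^\alpha(\tau_1) \leq \mathcal{E}^g_{\tau_1,\tau_2}[\mathcal{Y}^\alpha(\tau_2)]$ a.s.\ for all $\tau_1 \leq \tau_2$ in $\mathcal{T}$. Combined with the uniform bound $|\mathcal{Y}^\alpha(\tau)| \leq \eta_\tau$ with $\eta \in \mathbf{S}_2$, this shows that $\mathcal{Y}^\alpha$ is an $\mathbf{S}_2$-dominated $g$-supermartingale system.

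\textbf{Step 2 (right-continuity of the expectation and aggregation).} For $t_n \downarrow t$, Step 1 together with the Lipschitz stability of $\mathcal{E}^g$ in its terminal time and terminal condition already yields $E[\mathcal{Y}^\alpha(t)] \leq \liminf_n E[\mathcal{Y}^\alpha(t_n)]$. For the converse, fix $\epsilon > 0$ and pick an $\epsilon$-optimiser $\bar\alpha \in \mathbf{A}_t^\alpha$ for $\mathcal{Y}^\alpha(t)$. I would form the paste $\alpha^n_s := \alpha_s \mathbf{1}_{\{s \leq t_n\}} + \bar\alpha_s \mathbf{1}_{\{s > t_n\}}$ and, using continuity of the $f$-controlled process $\mathcal{M}^\alpha$, continuity of $\Phi$ (Assumption \ref{continuity}), and $L^2$-stability of both the $f$-SDE for $\mathcal{M}$ and the $g$-BSDE defining $\mathcal{E}^g$, show that $\mathcal{E}^g_{t_n,T}[\Phi(\mathcal{M}_T^{\alpha^n})] \longrightarrow \mathcal{E}^g_{t,T}[\Phi(\mathcal{M}_T^{\bar\alpha})]$ in $L^2$. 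Taking expectations and letting $\epsilon \downarrow 0$ then gives $\limsup_n E[\mathcal{Y}^\alpha(t_n)] \leq E[\mathcal{Y}^\alpha(t)]$, so the expectation function is right-continuous. A nonlinear analogue of the Dellacherie--Meyer aggregation theorem for $g$-supermartingale systems (Peng's regularisation result, in the spirit of what is done in the linear case in \cite{BER}) then produces the desired RCLL process $\bar Y^\alpha$ with $\bar Y^\alpha_\tau = \mathcal{Y}^\alpha(\tau)$ a.s.\ for every $\tau \in \mathcal{T}$.

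\textbf{Main obstacle.} The delicate point is guaranteeing that the paste $\alpha^n$ above actually belongs to $\mathbf{A}_{t_n}^\alpha$, i.e.\ that $\mathcal{M}^{\alpha^n}$ stays in the admissibility tube $[Y^0,Y^1]$ on $\{s > t_n\}$. In the linear setting $f \equiv 0$ of \cite{BER}, $\mathcal{M}^{\alpha^n} - \mathcal{M}^{\bar\alpha}$ is a pure stochastic integral whose uniform gap on $[t_n,T]$ vanishes with $t_n - t$; here the nonlinear drift $f$ couples the two paths and admissibility of $\alpha^n$ can fail for small $t_n - t$. The natural remedy is to first approximate each $\epsilon$-optimiser $\bar\alpha$ by controls that coincide with $\alpha$ on an auxiliary buffer $[t, t+\delta]$, control the resulting error in $\mathcal{E}^g$ by continuity of $\Phi$ and of $\mathcal{M}^\alpha$, and then diagonalise in $\delta$. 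This buffer/diagonal step, unnecessary in \cite{BER}, is precisely where the nonlinearity of $f$ forces a genuinely new technical argument.
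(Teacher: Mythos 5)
Your overall architecture matches the paper's: the DPP gives the $g$-supermartingale property, the bound \eqref{B} gives $\mathbf{S}_2$-domination, and an aggregation result for $g$-supermartingale systems (the paper uses Lemma A.2 of \cite{BPT}, which directly produces an optional ladlag process aggregating the family at \emph{all stopping times}, after which only the identification of right limits remains) reduces everything to showing $\overline{\mathcal{Y}}^\alpha_\tau=\mathcal{Y}^\alpha_\tau$ at each $\tau\in\mathcal{T}$. Your Step 2 "easy direction" via $\mathcal{Y}^\alpha(\tau)\leq\mathcal{E}^g_{\tau,\tau_n}[\mathcal{Y}^\alpha(\tau_n)]$ and BSDE stability is essentially the paper's Step 2 and is fine. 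But the converse inequality is exactly where you stop at an acknowledged obstacle, and the remedy you sketch does not close it.

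The difficulty is not only that the paste $\alpha^n_s=\alpha_s\mathbf{1}_{\{s\leq t_n\}}+\bar\alpha_s\mathbf{1}_{\{s>t_n\}}$ may leave the tube $[Y^0,Y^1]$; it is that the resulting terminal value need not be comparable to $\mathcal{M}_T^{\bar\alpha}$ at all, because the $f$-drift couples the state to the control: after the switch, $\mathcal{M}^{\alpha^n}$ solves the forward equation driven by $\bar\alpha$ but started from $\mathcal{M}^\alpha_{t_n}\neq\mathcal{M}^{\bar\alpha}_{t_n}$, so you lose the ordering $\mathcal{M}_T^{\alpha^n}\leq\mathcal{M}_T^{\bar\alpha}$ that the comparison-theorem step requires. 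Your buffer idea (force the control to equal $\alpha$ on $[t,t+\delta]$) does not resolve this mismatch: at $t+\delta$ the state is still $\mathcal{M}^\alpha_{t+\delta}$, and any admissible continuation from there has conditional $f$-expectation pinned to $\mathcal{M}^\alpha_{t+\delta}$, not to $\mathcal{M}^{\bar\alpha}_{t+\delta}$. The paper's actual construction is different and is the real content of the proof: it bumps the target up to $\tilde{\mathcal{M}}_T^{k,\alpha'}=\tfrac1k+(1-\tfrac1k)\mathcal{M}_T^{\alpha'}$ so that $\mathcal{E}^f_{\tau,T}[\tilde{\mathcal{M}}_T^{k,\alpha'}]\geq\mathcal{M}^\alpha_\tau$ with strict inequality off an exceptional set $A_k$ (handled separately via a strict comparison argument forcing $\mathcal{M}_T^{\alpha'}=1$ there); it then keeps the control $\alpha$ until the hitting time $\sigma_k$ of the $f$-martingale $\mathcal{E}^f_{\cdot,T}[\tilde{\mathcal{M}}_T^{k,\alpha'}]$ by $\mathcal{M}^\alpha$ and switches to the $Z$-component of that BSDE afterwards. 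This guarantees simultaneously admissibility, $\mathcal{M}_T^{\tilde\alpha_k}\leq\tilde{\mathcal{M}}_T^{k,\alpha'}$, and $\tilde\alpha_k\in\mathbf{A}^\alpha_{\tau_{n,k}}$ for stopping times $\tau_{n,k}>\tau$ converging to $\tau$; one then passes $n\to\infty$, then $k\to\infty$ using the continuity of $\Phi$. Without this (or an equivalent) construction your argument has a genuine gap at its central step. A secondary point: your modification theorem at deterministic times would still need to be upgraded to an identification at arbitrary stopping times, which is what the statement requires and what the paper's route through \cite{BPT} delivers directly.
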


\begin{proof}

%Note that, for each $\a \in \textbf{A}_{0}$, the family $\{ \mathcal{Y}^\alpha(\tau), \tau \in \mathcal{T}\}$ is a $\mathcal{T}$-system, in the sense of Definition (see Appendix).
By Lemma $\ref{DPP}$, we easily obtain that the family $\{ -\mathcal{Y}^\alpha(\tau),\,\, \tau \in \mathcal{T}\}$ is a $-g(-)$ supermartingale system. Since moreover $\eqref{B}$ holds, we can apply Lemma A.2 in \cite{BPT} and obtain the existence of an optional ladlag process, denoted by $(\mathcal{Y}^\alpha_t)_{t \leq T}$ which aggregates the family, that is $ \mathcal{Y}^\alpha(\tau)=\mathcal{Y}^\alpha_\tau$, for all $\tau \in \mathcal{T}$. Hence, the following limits:
$$
\lim\limits_{s\in  (t,T] \downarrow t}\mathcal{Y}^\alpha_s \;\; \text{and }\;\; \lim\limits_{s \in (t,T] \uparrow t}\mathcal{Y}^\alpha_{s}.
$$
are well-defined and finite.\\
Now, we define:
\begin{align}\label{right}
\overline{\mathcal{Y}}^\alpha_{t}: = \lim\limits_{s \in (t,T] \downarrow t}\mathcal{Y}^\alpha_{s},\; t \in [0,T[, \,\,\, \overline{\mathcal{Y}}^\alpha_{T}:=\mathcal{Y}^\alpha_{T}.
\end{align}
which is by definition a real-valued RCLL process.

In order to prove the desired regularity property, we have to show that for
every stopping time $\tau \in \mathcal{T}$, it holds that:
$$
\overline{\mathcal{Y}}^\alpha_{\tau}=\mathcal{Y}^\alpha_{\tau}\,\,\, a.s.
$$
The above relation implies that the processes $\overline{\mathcal{Y}}^\alpha$ and $\mathcal{Y}^\alpha$ are indistinguishable. The proof is divided in two steps.\\

\textbf{Step 1.} Fix $\tau \in \mathcal{T}$. We first prove that $\overline{\mathcal{Y}}^\alpha_\tau \leq \mathcal{Y}^\alpha_\tau$ a.s.\\
\textbf{a.} Let $\a^{\prime} \in \mb{A}_{\tau}^{\a}$. Fix $k \in \mathbb{N}^*$.\\
Define $\tilde{\cal{M}}_{T}^{k,\a^{\prime}}: = \frac{1}{k}+\cal{M}_{T}^{\a^{\prime}}(1-\frac{1}{k})$. Note that  $\tilde{\cal{M}}_{T}^{k,\a^{\prime}} \geq \cal{M}_{T}^{\a^{\prime}}$ and $\tilde{\cal{M}}_{T}^{k,\a^{\prime}} \rightarrow \cal{M}_{T}^{\a^{\prime}}$ when $k \rightarrow \infty.$ 
 In the sequel, we denote by $(\cal{E}_{\cdot,T}^{f}[\tilde{\cal{M}}_{T}^{k,\a^{\prime}}], \tilde{Z}^k)$  the solution of the BSDE associated
to $(\tilde{\cal{M}}_{T}^{k,\a^{\prime}},f)$. \\
Recall that $\cal{M}_{T}^{\a^{\prime}}(\omega)$ belongs for a.e. $\omega$ to $[0,1]$. Hence, by construction, we have:
$$
0 \leq \cal{M}_{T}^{\a^{\prime}} \leq \tilde{\cal{M}}_{T}^{k,\a^{\prime}} \leq 1 \, \, \, \text{a.s.}
$$
By applying the comparison theorem for BSDEs  and since $\alpha' \in \textbf{A}_\tau^{\alpha}$, we obtain:
\begin{align}\label{iineq}
\mathcal{E}_{\tau,T}^f[0] \leq \cal{M}_{\tau}^{\a} \leq \mathcal{E}_{\tau,T}^f[\tilde{\cal{M}}_{T}^{k,\a^{\prime}}] \, \, \, \text{a.s.}
\end{align}

We claim that it exists a sequence of stopping times $ (\tau_{n,k})_n$ valued a.s. in $[0,T]$ and an admissible control $\Tilde{\alpha}_{k} \in \textbf{A}_{ \tau_{n,k}}^\alpha$ for all $n \in \mathbb{N}$ such that: $ \tau_{n,k}$  $ \rightarrow \tau$ when $n$ tends to $+\infty$,   $ \tau_{n,k} >\tau$ a.s. on $\{ \tau <T \}$ for all $n \in \mathbb{N}$ and   $\mathcal{M}_T^{\Tilde{\alpha}_{k}} \leq \Tilde{\mathcal{M}}_T^{k,\alpha'}$. The proof is postponed to Step 1.b.\\
Thanks to the above assertion, we  can appeal to $\eqref{right}$ and obtain:
\begin{align}\label{ineqqqq}
\overline{\mathcal{Y}}^\alpha_{\tau} = \lim\limits_{n\to \infty}\mathcal{Y}^\alpha_{\tau_{n,k}} \,\,\, \text{a.s.}
\end{align}
Using the definition of $\mathcal{Y}^\alpha$, we get:
\begin{align}\label{ineqqqqq}
\mathcal{Y}^\alpha_{\tau_{n,k}} \leq \cal{E}_{\tau_{n,k},T}^{g}[\Phi(\cal{M}_{T}^{\Tilde{\a}_{k}})] \, \, \, \text{a.s.}
\end{align}
As $\cal{M}_{T}^{\Tilde{\a}_{k}} \leq \tilde{\cal{M}}_{T}^{k,\a^{\prime}}$ a.s. and $\Phi$ is nondecreasing, by applying  the comparison theorem for BSDEs, we get for all $n$:
\begin{align*}
\cal{E}_{\tau_{n,k},T}^{g}[\Phi(\cal{M}_{T}^{\Tilde{\a}_{k}})] \leq \cal{E}_{\tau_{n,k},T}^{g}[\Phi(\tilde{\cal{M}}_{T}^{k,\a^{\prime}})] \,\,\, \text{ a.s. }
\end{align*}
The above inequality together with \eqref{ineqqqq}, \eqref{ineqqqqq} and the continuity of the process $\mathcal{E}^f_{\cdot,T}[\Phi(\tilde{\cal{M}}_{T}^{k,\a^{\prime}})]$ lead to:
\begin{align*}
\overline{\mathcal{Y}}^\alpha_{\tau}  \leq \cal{E}_{\tau,T}^{g}[\Phi(\tilde{\cal{M}}_{T}^{k,\a^{\prime}})] \,\,\, \text{ a.s. }
\end{align*}
Since $\tilde{\cal{M}}_{T}^{k,\a^{\prime}} \rightarrow  \cal{M}_{T}^{\a^{\prime}}$ a.s. and $\Phi$ is a.s. continuous,  by letting $k$ tend to $\infty$, we obtain:
$$
\overline{\mathcal{Y}}^\alpha_{\tau}\leq \cal{E}_{\tau,T}^{g}[\Phi(\cal{M}_{T}^{\a^{\prime}})] \,\, \text{a.s}.
$$
By arbitrariness of $\a^{\prime} \in \textbf{A}_\tau^{\alpha}$, we  conclude:
$$
\overline{\mathcal{Y}}^\alpha_{\tau}\leq \mathcal{Y}^\alpha_\tau \,\, \text{a.s}.
$$
\textbf{b.} i) We first construct, for each $k \in \mathbb{N}^*$, the sequence of stopping times $(\tau_{n,k})_{n}$ valued a.s. in $[0,T]$ such that $\tau_{n,k} \rightarrow \tau$ when $n \rightarrow \infty$ and $\tau_{n,k}>\tau$ a.s. on $\{\tau>T\}$ for all $n \in \mathbb{N}$.\\ 
To do this, we start by defining the following stopping time:
\begin{align}
&\sigma_{k}: = \inf \{\tau \leq t \leq T; \cal{M}_{t}^{\a} = \cal{E}_{t,T}^{f}[\tilde{\cal{M}}_{T}^{k,\a^{\prime}}]\}.
\end{align}
We use the convention $\inf \emptyset=+\infty$.

We introduce $(\tau_n)_n$ a sequence of stopping times with values in $[0,T]$ such that  $\tau_n > \tau$ on $\{ \tau <T \}$ for all $n$ and $\tau_n \rightarrow \tau$ a.s. when $n$ tends to $+ \infty.$\\
For each $n$, we define $\tau_{n,k}$ as follows:
\begin{equation}\label{stoptime}
\tau_{n,k}:=\tau_n \textbf{1}_{A_k}+\left(\tau_n \wedge  {\sigma}_{k} \right)\textbf{1}_{A_k^c},
\end{equation}
with
$$A_k:= \{\mathcal{E}_{\tau,T}^f[\Tilde{\cal{M}}_{T}^{k,\alpha^{\prime}}] - \cal{M}_{\tau}^{\a}=0 \} \in \mathcal{F}_\tau;\,\,\, A_k^c:= \{ \mathcal{E}_{\tau,T}^f[\Tilde{\cal{M}}_{T}^{k,\alpha^{\prime}}] - \cal{M}_{\tau}^{\a}>0 \} \in \mathcal{F}_\tau.$$
Remark that by $\eqref{iineq}$, $P(A_k \cup A_k^c)=1$ and thus $\tau_{n,k} \downarrow \tau$ a.s. when $n \rightarrow \infty.$ 
We precise that we have to introduce the sets $A_k$ and $A_k^c$ because $\sigma_k=\tau$  on $A_k$. In order to have $\tau_{n,k}>\tau$ a.s. on $\{\tau>T\}$, it  remains to prove that $\tau < \sigma_k$ on $A_k^c$.

The definition of $\sigma_k$ together with the continuity of the processes $\cal{M}^{\a}$ and  $\mathcal{E}_{\cdot,T}^f[\Tilde{\cal{M}}^{k,\a^{\prime}}]$, imply that almost surely, $\sigma_k=+ \infty$ or $\mathcal{E}_{\sigma_k,T}^f[\Tilde{\cal{M}}^{k,\a^{\prime}}_{T}] \leq \cal{M}^{\a}_{\sigma_k}$. Moreover, since on $A_k^c$ we have $\mathcal{E}^f_{\tau,T}[\Tilde{\cal{M}}_{T}^{k,\alpha^{\prime}}] > \cal{M}_{\tau}^{\a}$ and  $\tau \leq \sigma_k$ a.s., one can thus conclude that
\begin{align*}
\tau< \sigma_k  \text{ } \text{a.s. on } A_k^c.
\end{align*}
ii) We provide the existence of an admissible control  $\Tilde{\alpha}_{k} \in \textbf{A}_{\tau_{n,k}}^\alpha$ for all $n \in \mathbb{N}$ such that $\mathcal{M}_T^{\Tilde{\alpha}_{k}} \leq \Tilde{\mathcal{M}}_T^{k, \alpha'}$.The control $\Tilde{\a}_{k}$ is defined as follows:
\begin{equation*}
\Tilde{\a}_{k}:= \a_s \mb{1}_{ \{ s \leq  \Tilde{\sigma}_{k} \}} + \Tilde{Z}^k_s \textbf{1}_{ \{ s>  \Tilde{\sigma}_{k}\}},
\end{equation*}
where $\Tilde{\sigma}_k=\sigma_k \wedge T.$ Recall that $\Tilde{Z}^k$ is the process associated to the BSDE representation of $\tilde{\cal{M}}_{T}^{k,\a^{\prime}}$.\\
Note that the above construction ensures that $0 \leq \cal{M}_{T}^{\Tilde{\a}_{k}} \leq \tilde{\cal{M}}_{T}^{k,\a^{\prime}}$ a.s. It remains to show that $\Tilde{\alpha}^{k} \in \textbf{A}^{\alpha}_{\tau_{n,k}}.$
 It is clear that we have:
 \begin{equation*}
 \mathcal{M}_{\tau_n \wedge  \sigma_{k}}^{\alpha}=\mathcal{M}_{\tau_n \wedge   \sigma_{k}}^{\Tilde{\alpha}^{k}}\,\,\, \text{ a.s.}
 \end{equation*}
 and hence
 \begin{align}\label{MM1}
 \mathcal{M}_{\tau_n \wedge  \sigma_{k}}^{\alpha}=\mathcal{M}_{\tau_n \wedge   \sigma_{k}}^{\Tilde{\alpha}^{k}}\,\,\, \text{ a.s. on } A_k^c.
 \end{align}

Since $\sigma_k = \tau $ on $A_k$, we have to prove that $\mathcal{M}_{\tau_n}^{\alpha}=\mathcal{M}_{\tau_n}^{\Tilde{\alpha}^{k}} \,\,\, \text{ a.s. on } A_k.$
Recall that $\alpha' \in \textbf{A}_\tau^{\alpha}$. Hence, by definition of the set $A_k$, we obtain $\mathcal{M}_\tau^{\alpha'}=\mathcal{E}_{\tau,T}^f[\Tilde{\mathcal{M}}_T^{k, \alpha'}]$ a.s. on $A_k$. A strict comparison theorem for BSDEs and the definition of $\Tilde{\mathcal{M}}_T^{k, \alpha'}$ lead to
\begin{align}\label{first}
\Tilde{\mathcal{M}}_T^{k, \alpha'}=\mathcal{M}_T^{\alpha'}=1 \,\,\,  \text{ a.s. on } A_k.
\end{align}

By uniqueness of the solution of a BSDE, we get:
\begin{align*}
\mathcal{E}_{\tau_n,T}^f[\Tilde{\mathcal{M}}_T^{k, \alpha'}]=\mathcal{M}_{\tau_n}^{\alpha'}=\mathcal{E}_{\tau_n,T}^f[1] \,\,\, \text{ a.s. on } A_k.
\end{align*}

Moreover, by $\eqref{first}$ and the comparison theorem for BSDEs, we have $\mathcal{M}_\tau^{\alpha'}=\mathcal{E}_{\tau,T}^f[1]$ a.s. on $A_k$ and since $\alpha' \in \textbf{A}_{\tau}^{\alpha}$, we get $\mathcal{M}_\tau^{\alpha}=\mathcal{E}_{\tau,T}^f[1]$ a.s. on $A_k$. The strict comparison theorem for BSDEs allows us to conclude that:
\begin{align*}
\mathcal{M}_{\tau_n}^{\alpha}=\mathcal{E}_{\tau_n,T}^f[1] \text{ a.s.  on } A_k.
\end{align*}
The two above equalities imply:

\begin{align}\label{second}
\mathcal{M}_{\tau_n}^{\alpha}= \mathcal{E}_{\tau_n,T}^f[\Tilde{\mathcal{M}}_T^{k, \alpha'}] \,\, \text{ a.s. on} A_k.
\end{align}

Now, recall that $\mathcal{E}_{\tau,T}^f[\Tilde{\cal{M}}_{T}^{k,\alpha^{\prime}}]=\cal{M}_{\tau}^{\a}$ on $A_k$. The definition of the control $\Tilde{\alpha}^k$ together with the fact that $\sigma_k=\tau$  on $A_k$ lead to:

\begin{align}\label{third}
\mathcal{E}_{\tau_n,T}^f[\Tilde{\mathcal{M}}_T^{k, \alpha'}]=\mathcal{M}_{\tau_n}^{\Tilde{\alpha}^{k}}  \text{ a.s.  on } A_k.
\end{align}

Using $\eqref{second}$,  $\eqref{third}$ hold, we finally obtain:
\begin{align}\label{MM2}
\mathcal{M}_{\tau_n}^{\alpha}=\mathcal{M}_{\tau_n}^{\Tilde{\alpha}^{k}}  \text{ a.s.  on } A_k.
\end{align}

By \eqref{MM2} we deduce that $\Tilde{\alpha}^{k} \in \textbf{A}_{\tau_{n,k}}^{\alpha}.$

\textbf{Step 2.}
Let us prove now the converse inequality $\overline{\mathcal{Y}}^\alpha_{\tau}\geq \mathcal{Y}^\alpha_{\tau} \,\, \text{a.s}.$\\
By applying on $[\tau, \tau_n]$ the stability result  for  BSDEs with  parameters $(\overline{\mathcal{Y}}^\alpha_\tau, 0)$ and ($\mathcal{Y}_{\tau_n}^{\alpha}, g\textbf{1}_{[0, \tau_n)}$), we obtain:
\begin{equation}\label{ineq}
|\!|\overline{\mathcal{Y}}^\alpha_\tau-\mathcal{E}_{\tau,\tau_n}^g[\mathcal{Y}^{\alpha}_{\tau_n}] |\!|_{\textbf{L}_2} \leq C \left(|\!|\overline{\mathcal{Y}}^\alpha_\tau-{\mathcal{Y}}_{\tau_n}^{\alpha} |\!|_{\textbf{L}_2}+\textbf{E}[\int_{\tau}^{\tau^n}|g(s,\overline{\mathcal{Y}}^\alpha_\tau,0)|^2ds]\right).
\end{equation}
 The assumptions on the driver $g$, the convergence of $\tau_n$ to $\tau$, the integrability of $\mathcal{Y}^\alpha$ (see \eqref{B}), and  Lebesgue's Theorem imply that $\textbf{E}[\int_{\tau}^{\tau^n}|g(s,\overline{\mathcal{Y}}^\alpha_\tau,0)|^2ds] \rightarrow 0$. By the same arguments and \eqref{right}, we get $|\!|\overline{\mathcal{Y}}^\alpha_\tau-{\mathcal{Y}}_{\tau_n}^{\alpha} |\!|_{\textbf{L}_2} \rightarrow 0$. Now, we let $n$ tend to $\infty$ in \eqref{ineq}, and  obtain $\mathcal{E}_{\tau,\tau_n}^g[\mathcal{Y}^{\alpha}_{\tau_n}] \rightarrow  \overline{\mathcal{Y}}^\alpha_\tau$ a.s., up to a subsequence.

Moreover, Lemma  \ref{DPP} implies that  $\mathcal{E}_{\tau,\tau_n}^g[\mathcal{Y}_{\tau_n}^{\alpha}] \geq \mathcal{Y}_{\tau}^{\alpha}.$ This inequality and the above convergence lead to the desired result.
\end{proof}

\begin{remark}\label{comments}
In \cite{BER}, it is provided  the existence of a control $\alpha_n \in \textbf{A}_{\tau_n}^\alpha$, with $\tau_n \rightarrow \tau$ and $\tau_n>\tau$ for all n, such that $M_T^{\alpha_n}$ remains ''sufficiently close'' to $M_T^{\alpha'}$. The control $\alpha_n$ is obtained by scaling $\alpha$  in  an appropriate way. This approach cannot be applied in the case of nonlinear constraints, as being clearly specific to the linear setting. 
\end{remark}

Using similar arguments as in Theorem 2.1 in \cite{BER} (points (iii), (iv)) one can show the following BSDE representation for $\mathcal{Y}^\alpha$:

\begin{theorem}
Assume that Assumption $\ref{continuity}$ holds. Then  there exists a family $(\cal{Z}^{\a}, \cal{K}^{\a})_{ \a \in \textbf{A}_{0}}$ satisfying
\begin{equation}\label{Eq1}
\sup_{\a \in \textbf{A}_{0}}\|\cal{Y}^{\a}, \cal{Z}^{\a}, \cal{K}^{\a}\|_{\mathbf{S}_2 \times \textbf{H}_2 \times \textbf{I}_2}<+\infty.
\end{equation}
and such that for all $\a \in \textbf{A}_{0}$, we have
\begin{equation}\label{Eqqqq2}
\cal{Y}_t^{\alpha}=\Phi (\cal{M}_{T}^{\alpha}) + \int_{t}^{T}g(s, \cal{Y}_{s}^{\a}, \cal{Z}_{s}^{\a})ds-\int_{t}^{T}\cal{Z}_{s}^{\a}dW_{s} +\cal{K}_t^{\a}-\cal{K}_{T}^{\a}.
\end{equation}
\begin{equation}\label{Eqq3}
\cal{K}_{\tau_1}^{\alpha}= \essinf_{\bar{\alpha} \in \textbf{A}_{\tau_1}^{\alpha}} \textbf{E}[\cal{K}_{\tau_2}^{\bar{\alpha}}| \mathcal{F}_{\tau_1}], \,\, \forall \tau_1 \in \mathcal{T}, \,\, \tau_2 \in \mathcal{T}_{\tau_1},
\end{equation}
and
\begin{equation}\label{Eq4}
(\mathcal{Y}^{\alpha}, \mathcal{Z}^{\alpha}, \mathcal{K}^{\alpha})\text{\textbf{1}}_{[0,\tau]}=(\mathcal{Y}^{\alpha}, \mathcal{Z}^{\alpha}, \mathcal{K}^{\alpha})\text{\textbf{1}}_{[0,\tau]}, \,\, \forall \tau \in \mathcal{T}, \,\, \bar{\alpha} \in \textbf{A}_\tau^{\alpha}.
\end{equation}

Moreover, $(\cal{Y}^{\a}, \cal{Z}^{\a}, \cal{K}^{\a})_{\a \in \textbf{A}_{0}}$ is the unique family satisfying \eqref{Eq1}, \eqref{Eqqqq2}, \eqref{Eqq3} and \eqref{Eq4}.
\end{theorem}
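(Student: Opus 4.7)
The plan is to derive the stated representation as a nonlinear Doob--Meyer decomposition of the $g$-supermartingale $\mathcal{Y}^\alpha$. Specializing Lemma \ref{DPP} to $\overline{\alpha}=\alpha$ gives $\mathcal{Y}^\alpha(\tau_1)\leq \mathcal{E}^g_{\tau_1,\tau_2}[\mathcal{Y}^\alpha(\tau_2)]$ for every $\tau_1\leq\tau_2$ in $\mathcal{T}$, so the RCLL aggregator produced in Theorem \ref{cadlag} is a strong $\mathcal{E}^g$-supermartingale. Since the uniform bound \eqref{B} places this family in class (D), Peng's nonlinear Doob--Meyer decomposition for RCLL $g$-supermartingales yields a unique pair $(\mathcal{Z}^\alpha,\mathcal{K}^\alpha)\in\mathbf{H}_2\times\mathbf{I}_2$ with $\mathcal{K}^\alpha_0=0$ satisfying the dynamics \eqref{Eqqqq2}; the uniform estimate \eqref{Eq1} then follows from the standard a priori estimates for BSDEs, using \eqref{B} and the boundedness of $\Phi$. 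The terminal value is identified by observing that $\mathbf{A}_T^\alpha$ forces every admissible $\alpha'$ to agree with $\alpha$ $dt\otimes dP$-a.e. on $[0,T]$, whence $\mathcal{M}_T^{\alpha'}=\mathcal{M}_T^\alpha$ and $\mathcal{Y}^\alpha(T)=\Phi(\mathcal{M}_T^\alpha)$, and the RCLL aggregator carries this equality to the process level.

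The minimality condition \eqref{Eqq3} is the most delicate point. Applying Lemma \ref{DPP} between $\tau_1$ and $\tau_2$ and subtracting the decomposition \eqref{Eqqqq2} written on the same interval, I express the increment $\mathcal{K}^{\overline{\alpha}}_{\tau_2}-\mathcal{K}^\alpha_{\tau_1}$ as the gap between $\mathcal{E}^g_{\tau_1,\tau_2}[\mathcal{Y}^{\overline{\alpha}}(\tau_2)]$ and $\mathcal{Y}^\alpha(\tau_1)$, and then linearize $g$ along the two relevant BSDE solutions in the form $g(s,y_1,z_1)-g(s,y_2,z_2)=\beta_s(y_1-y_2)+\gamma_s^\top(z_1-z_2)$ with $(\beta,\gamma)$ bounded predictable. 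The nonlinear supermartingale property then becomes a classical supermartingale property under an equivalent measure, and passing to the essinf on the right-hand side together with the dynamic programming identity yields $\mathcal{K}^\alpha_{\tau_1}=\essinf_{\overline{\alpha}}\mathbf{E}[\mathcal{K}^{\overline{\alpha}}_{\tau_2}\mid\mathcal{F}_{\tau_1}]$.

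The consistency \eqref{Eq4} is essentially tautological: for $\overline{\alpha}\in\mathbf{A}_\tau^\alpha$, the sets $\mathbf{A}_t^{\overline{\alpha}}$ and $\mathbf{A}_t^\alpha$ coincide for every $t\leq\tau$, so the essinf definitions give $\mathcal{Y}^{\overline{\alpha}}=\mathcal{Y}^\alpha$ on $[0,\tau]$, and the uniqueness of the Doob--Meyer decomposition transports this equality to the triples. Uniqueness of the family satisfying \eqref{Eq1}--\eqref{Eq4} then follows from It\^o's formula applied to the difference of two candidate solutions for fixed $\alpha$, combined with \eqref{Eqq3} to pin down the reflector. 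The main obstacle is expected to be the minimality characterization \eqref{Eqq3}: the linearization must produce a genuine essinf equality rather than merely an inequality, and it is at this step that the nonlinearity of $\mathcal{E}^g$ and of the dynamics of $\mathcal{M}^\alpha$ combine so that the arguments of \cite{BER} require substantive adaptation.
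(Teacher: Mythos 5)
Your proposal follows the route the paper itself intends: the paper offers no proof of this theorem beyond the remark that it follows ``using similar arguments as in Theorem 2.1 in \cite{BER} (points (iii), (iv))'', and those arguments are precisely the nonlinear Doob--Meyer decomposition of the aggregated value process combined with the dynamic programming principle, which is what you propose. Two points need repair. First, a sign: specializing Lemma \ref{DPP} to $\overline{\alpha}=\alpha$ gives $\mathcal{Y}^\alpha(\tau_1)\le\mathcal{E}^g_{\tau_1,\tau_2}[\mathcal{Y}^\alpha(\tau_2)]$, which is the $g$-\emph{sub}martingale inequality, not the supermartingale one; this is exactly why the paper, in the proof of Theorem \ref{cadlag}, works with $-\mathcal{Y}^\alpha$ as a supermartingale for the mirrored driver $(t,y,z)\mapsto -g(t,-y,-z)$. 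Applying Peng's decomposition for $g$-supermartingales verbatim to $\mathcal{Y}^\alpha$ would produce $+(\mathcal{K}^\alpha_T-\mathcal{K}^\alpha_t)$ rather than $\mathcal{K}^\alpha_t-\mathcal{K}^\alpha_T$ in \eqref{Eqqqq2}; you must pass to $-\mathcal{Y}^\alpha$ (or invoke the submartingale form of the decomposition) to land on the stated sign. Second, in the minimality step \eqref{Eqq3} your linearization yields $\essinf_{\overline{\alpha}}\mathbf{E}\left[\int_{\tau_1}^{\tau_2}\Gamma^{\overline{\alpha}}_s\,d\mathcal{K}^{\overline{\alpha}}_s\mid\mathcal{F}_{\tau_1}\right]=0$ for an adjoint process $\Gamma^{\overline{\alpha}}$ associated with the linearized driver, not directly the stated identity with the \emph{plain} conditional expectation; to convert one into the other you must use that $\Gamma^{\overline{\alpha}}$ is bounded above and below by positive constants depending only on $C_g$ and $T$ (so that, the measures $d\mathcal{K}^{\overline{\alpha}}$ being nonnegative, the weighted essential infimum vanishes if and only if the unweighted one does), together with $\mathcal{K}^{\overline{\alpha}}_{\tau_1}=\mathcal{K}^{\alpha}_{\tau_1}$ coming from \eqref{Eq4}. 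With these two points made explicit, your sketch is sound and coincides with the argument of \cite{BER} that the paper delegates to.
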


\section{Existence of optimal controls in the case of concave constraints.}

We show that in the case of concave constraints and under convexity assumptions on $\Phi$ and $g$, we get the existence of an optimal control $\hat \alpha$, that is  $\mathcal{Y}_t^{\hat \alpha}=\mathcal{E}_{t,T}^{g}[\Phi(\mathcal{M}_T^{\hat{\alpha}})].$ 
%We generalize the result obtained in \cite{BER}, but compared to the case of classical expectation constraints, we need additional arguments in order to obtain the optimal control, due to the nonlinearity of the driver~$f$.

For all $(\lambda, m_1, m_2,t,y_1,y_2,z_1,z_2) \in [0,1] \times [0,1]^2 \times [0,T] \times \mathbf{R}^2 \times [\mathbf{R}^{d}]^2$, we assume a.s. the following:\\
$\mathbf{(H_{conc})}$
$$ \lambda f(t,y_1,z_1)+(1-\lambda)f(t,y_2,z_2) \leq f(t,\lambda y_1+(1-\lambda)y_2,\lambda z_1+(1-\lambda)z_2).$$
$\mathbf{(H_{conv})}$
$$\Phi(\lambda m_1+(1-\lambda)m_2) \leq \lambda \Phi(m_1)+(1-\lambda)\Phi(m_2)$$
$$g(t,\lambda y_1+(1-\lambda)y_2,\lambda z_1+(1-\lambda)z_2) \leq \lambda g(t,y_1,z_1)+(1-\lambda)g(t,y_2,z_2).$$

\begin{proposition}\label{optimal}
%, ,

Under Hypothesis $\mathbf{(H_{conv})}$  and $\mathbf{(H_{conc})}$, for any $(\tau, \alpha) \in \mathcal{T} \times \textbf{H}_2$, there exists $\hat{\alpha}^{\tau,\alpha} \in \textbf{A}_\tau^{\alpha}$ such that
$$\mathcal{Y}_\tau^{\alpha}=\mathcal{E}_\tau^g \left[\Phi(\mathcal{M}_T^{\hat{\alpha}^{\tau,\alpha}})  \right]=\mathcal{E}_{\tau,\tau'}^g \left[ \mathcal{Y}_{\tau'}^{\hat{\alpha}^{\tau,\alpha}} \right], \,\,\, \forall \tau' \in \mathcal{T}_{\tau}.$$
\end{proposition}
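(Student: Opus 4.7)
The plan is to produce $\hat\alpha^{\tau,\alpha}$ as the limit of a minimizing sequence of controls by recasting the essential infimum on $\textbf{A}_\tau^\alpha$ as a convex optimisation problem at the level of admissible terminal values; the second identity in the statement will then fall out of Lemma~\ref{DPP}. To each $\alpha'\in\textbf{A}_\tau^\alpha$ I would associate the terminal $X(\alpha'):=\mathcal{M}_T^{\alpha'}\in[0,1]$, which satisfies $\mathcal{E}^f_{\tau,T}[X(\alpha')]=\mathcal{M}^\alpha_\tau$, and introduce the relaxed feasible set
\[
\mathcal{X}_\tau^\alpha:=\bigl\{X\in\textbf{L}_2(\mathcal{F}_T):\,0\le X\le 1\text{ a.s. and }\mathcal{E}^f_{\tau,T}[X]\ge\mathcal{M}^\alpha_\tau\text{ a.s.}\bigr\}.
\]
Monotonicity of $\Phi$ and $\mathcal{E}^g$, together with a scaling reduction ($X\mapsto\lambda^\ast X$ with an $\mathcal{F}_\tau$-measurable $\lambda^\ast\in[0,1]$ picked via the intermediate value theorem so that $\mathcal{E}^f_{\tau,T}[\lambda^\ast X]=\mathcal{M}^\alpha_\tau$), deliver the identity
\[
\mathcal{Y}^\alpha_\tau=\essinf_{X\in\mathcal{X}_\tau^\alpha}\tilde J(X),\qquad \tilde J(X):=\mathcal{E}^g_{\tau,T}[\Phi(X)],
\]
and conversely any minimiser $\hat X$ with $\mathcal{E}^f_{\tau,T}[\hat X]=\mathcal{M}^\alpha_\tau$ supplies $\hat\alpha^{\tau,\alpha}$ on $[\![\tau,T]\!]$ as the $Z$-component of the BSDE representation of $\hat X$ under $f$, prolonged by $\alpha$ on $[\![0,\tau]\!]$.

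Next I would verify that $\mathcal{X}_\tau^\alpha$ is a convex, $\textbf{L}_2$-bounded and $\textbf{L}_2$-closed (hence weakly compact) subset of $\textbf{L}_2(\mathcal{F}_T)$, and that $\tilde J$ is convex and strongly continuous on it. Convexity of $\mathcal{X}_\tau^\alpha$ reduces to concavity of $X\mapsto\mathcal{E}^f_{\tau,T}[X]$, which is the standard consequence of $(H_{conc})$ obtained by comparison between the BSDEs for $X_1$, $X_2$ and $\lambda X_1+(1-\lambda)X_2$; boundedness and closedness are immediate from $0\le X\le 1$ and $\textbf{L}_2$-stability of BSDEs. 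Convexity of $\tilde J$ is obtained by chaining $(H_{conv})$ on $\Phi$ (together with monotonicity of $\mathcal{E}^g$) with $(H_{conv})$ on $g$ (via a second BSDE comparison), and strong continuity is a direct consequence of Assumption~\ref{continuity}, the uniform bound $|\Phi|\le 1$ and $\textbf{L}_2$-stability.

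Existence of a minimiser is then routine: pick $(X_n)\subset\mathcal{X}_\tau^\alpha$ with $\tilde J(X_n)\downarrow\mathcal{Y}^\alpha_\tau$ (lattice property of essential infima), extract a weakly convergent subsequence with limit $\hat X\in\mathcal{X}_\tau^\alpha$, and note that convexity plus strong continuity make $\tilde J$ weakly lower semicontinuous, so $\tilde J(\hat X)\le\mathcal{Y}^\alpha_\tau$; the scaling reduction above then yields the first equality. The second equality follows from Lemma~\ref{DPP}: taking $\bar\alpha=\hat\alpha^{\tau,\alpha}\in\textbf{A}_\tau^\alpha$ gives $\mathcal{Y}^\alpha_\tau\le\mathcal{E}^g_{\tau,\tau'}[\mathcal{Y}^{\hat\alpha^{\tau,\alpha}}_{\tau'}]$, while the flow property of $\mathcal{E}^g$ combined with $\mathcal{Y}^{\hat\alpha^{\tau,\alpha}}_{\tau'}\le\mathcal{E}^g_{\tau',T}[\Phi(\mathcal{M}_T^{\hat\alpha^{\tau,\alpha}})]$ yields the reverse. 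I expect the main obstacle to be the convexification step itself: in the linear case of \cite{BER} convex combinations of admissible controls trivially remain admissible, but here the $f$-martingale property is destroyed under summation of controls, and the move to terminal values combined with the concavity of $f$ from $(H_{conc})$ is precisely what restores a convex feasible set.
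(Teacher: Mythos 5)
Your route is genuinely different in packaging from the paper's: you relax the optimisation to the set $\mathcal{X}_\tau^\alpha$ of terminal values with the \emph{inequality} constraint $\mathcal{E}^f_{\tau,T}[X]\geq\mathcal{M}^\alpha_\tau$, and invoke weak compactness of a convex bounded closed subset of $\textbf{L}_2$ plus weak lower semicontinuity of a convex continuous objective. The paper instead takes a minimizing sequence $(\alpha^n)$ of controls, applies Komlos' theorem to the terminal values $\mathcal{M}_T^{\alpha^n}$, and uses $(H_{conc})$ (resp.\ $(H_{conv})$) through the comparison theorem to show that the Cesàro means stay feasible (resp.\ do not increase the cost); the limit $\tilde{\mathcal{M}}_T$ then only satisfies $\mathcal{E}^f_{\tau,T}[\tilde{\mathcal{M}}_T]\geq\mathcal{M}^\alpha_\tau$, and the admissible optimal control is rebuilt by running the forward $f$-martingale from $\mathcal{M}^\alpha_\tau$ with the $Z$-component of the backward representation of $\tilde{\mathcal{M}}_T$ and switching to $Z^0$ at the hitting time of the lower barrier $\mathcal{E}^f_{\cdot,T}[0]$. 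Your closing diagnosis is exactly right: in both proofs $(H_{conc})$ is what restores convexity/stability of the feasible set of \emph{terminal values} after convex combinations of controls cease to be admissible. Your treatment of the second equality matches the paper's.

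Two steps in your plan need real repair. First, ``convexity plus strong continuity make $\tilde J$ weakly lower semicontinuous'' is a statement about real-valued functionals; here $\tilde J(X)=\mathcal{E}^g_{\tau,T}[\Phi(X)]$ is $\textbf{L}_2(\mathcal{F}_\tau)$-valued and the problem is an \emph{essential} infimum in the a.s.\ order, for which weak lsc is not a meaningful off-the-shelf notion. The correct substitute is to take the directed-downward minimizing sequence $(X_n)$ with $\tilde J(X_n)\downarrow\mathcal{Y}^\alpha_\tau$ a.s., pass to a weakly convergent subsequence, and apply Mazur's lemma: finite convex combinations $Y_m$ of the tail converge strongly to $\hat X$, convexity of $\tilde J$ in the a.s.\ order plus monotonicity give $\tilde J(Y_m)\leq\tilde J(X_{n_m})$ a.s., and strong continuity closes the argument. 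Once written out, this is essentially the paper's Komlos/Cesàro step, so the gain over the paper is smaller than it looks. Second, the scaling reduction is not free when $f$ is nonlinear: $\mathcal{E}^f_{\tau,T}[\lambda X]$ is not $\lambda\,\mathcal{E}^f_{\tau,T}[X]$, so producing an $\mathcal{F}_\tau$-measurable $\lambda^\ast$ with $\mathcal{E}^f_{\tau,T}[\lambda^\ast X]=\mathcal{M}^\alpha_\tau$ requires a version of $\lambda\mapsto\mathcal{E}^f_{\tau,T}[\lambda X]$ jointly continuous in $(\lambda,\omega)$ (via a priori estimates), a measurable selection such as $\lambda^\ast:=\inf\{\lambda\in[0,1]\cap\mathbb{Q}:\mathcal{E}^f_{\tau,T}[\lambda X]\geq\mathcal{M}^\alpha_\tau\}$, and a localization argument to evaluate $\mathcal{E}^f_{\tau,T}$ at a random scaling. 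The paper's barrier stopping time $\theta^{\tilde{\alpha}}$ avoids all of this while achieving the same purpose ($\mathcal{M}_T^{\hat\alpha}\leq\tilde{\mathcal{M}}_T$ with $\mathcal{M}^{\hat\alpha}$ pinned at $\mathcal{M}^\alpha_\tau$ and kept inside $[Y^0,Y^1]$); you may wish to adopt it in place of the intermediate value theorem.
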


\begin{proof}
By Lemma \ref{sequence} in the Appendix, there exists $(\alpha^n)_n \in \textbf{A}_\tau^\alpha$ such that:
\begin{align}\label{convvvv}
\mathcal{Y}_\tau^\alpha= \lim_{n \rightarrow \infty} \mathcal{E}_{\tau,T}^g[\Phi(\mathcal{M}_T^{\alpha^n})].
\end{align}
Recall that $(\mathcal{M}_T^{\alpha^n})_n$ is valued in [0,1]. By Komlos Theorem, $\Tilde{\mathcal{M}}_T^{n}:= \frac{1}{n} \sum_{i \leq n }  \mathcal{M}_T^{\alpha^{i}}$ converges  a.s. to a random variable  $\Tilde{\mathcal{M}_T}$ which belongs  a.s. to $[0,1]$.\\
From the concavity assumption on the driver $f$ and  the comparison theorem for BSDEs we get:
\begin{align}\label{m1}
\mathcal{E}_{\tau,T}^{f}[\Tilde{\mathcal{M}}_T^{n}] \geq \dfrac{1}{n} \sum_{i \leq n } \mathcal{E}_{\tau,T}^f [\mathcal{M}_T^{\alpha^{i}}] = \mathcal{M}_{\tau}^{\alpha},
\end{align}
since $\alpha^n \in \textbf{A}_\tau^\alpha$  for all $n$.\\
The a priori estimates for BSDEs lead to:
\begin{align*}
|\mathcal{E}_{\tau,T}^{f}[\Tilde{\mathcal{M}}_T^{n}]-\mathcal{E}_{\tau,T}^{f}[\Tilde{\mathcal{M}}_T] |^2 \leq \textbf{E}_t[|\Tilde{\mathcal{M}}_T^{n}-\Tilde{\mathcal{M}}_T|^2].
\end{align*}
The a.s. convergence $\Tilde{\mathcal{M}}_T^{n} \rightarrow \Tilde{\mathcal{M}_T}$ and the boundness of the sequence $(\Tilde{\mathcal{M}}_T^{n})_n$ allow us to apply the Lebesgue's theorem and to derive that the right hand side of the above inequality tends to $0$ when $n$ goes to $+ \infty.$ We thus derive that:
\begin{align}\label{m2}
\mathcal{E}_{\tau,T}^{f}[\Tilde{\mathcal{M}}_T^{n}] \rightarrow \mathcal{E}_{\tau,T}^{f}[\Tilde{\mathcal{M}}_T] \,\,\,{a.s.}
\end{align}
Hence, inequality \eqref{m1} combined with \eqref{m2} lead to
%\begin{align}
$\mathcal{E}_{\tau,T}^{f}[\Tilde{\mathcal{M}}_T] \geq \mathcal{M}_{\tau}^{\alpha}.$
%\end{align}

Let us denote by $\Tilde{\alpha}$ the  control associated to the BSDE  with terminal condition $\Tilde{\mathcal{M}}_T$ and driver $f$.
We define the following stopping time:
$$
\theta^{\tilde{\a}}:=\inf \{ \tau \leq s \leq T: \cal{M}_{s}^{\tau, \mathcal{M}_{\tau}^{\alpha},\tilde{\a}} = \cal{E}_{s,T}^{f}[0]\} \wedge T,
$$
with the convention $\inf \emptyset =+\infty.$
We recall that  $(Y^{0},Z^{0})$ represents the solution of the BSDE associated to driver $f$ and terminal condition
$0$ and we define the control $\hat{\a}$ as follows:
\begin{align*}
\hat{\alpha}_{s}: = \alpha_{s} \textbf{1}_{s \leq \tau}+ \tilde{\a}_{s}\mb{1}_{\{\tau < s\leq \t^{\tilde{\a}}\}} + Z_{s}^{0}\mb{1}_{\{s>\t^{\tilde{\a}}\}}.
\end{align*}

Note that $\hat{\a}$ belongs to $\textbf{A}_{\tau}^{\alpha}$. Moreover, by construction, we have:
\begin{align} \label{ee1}
\cal{M}_{T}^{\hat{\alpha}} \leq \Tilde{\cal{M}}_{T} \,\,\, \text{a.s.}
\end{align}
Now, by using hypothesis $\mathbf{(H_{conv})}$ and the comparison theorem, we obtain:
\begin{align}\label{BV}
\Tilde{\mathcal{Y}}_\tau^n:=  \dfrac{1}{n} \sum_{i \leq n }  \mathcal{E}_{\tau,T}^g \left[\Phi(\mathcal{M}_T^{\alpha^{i}})  \right] \geq \mathcal{E}_{\tau,T}^{g} \left[\Phi(\Tilde{\mathcal{M}}_T^{n}) \right].
\end{align}
By $\eqref{convvvv}$ and Cesaro's Lemma we have $\lim_{n \rightarrow \infty} \Tilde{\mathcal{Y}}_\tau^n=\mathcal{Y}_\tau^\alpha$ a.s.

Similar arguments as the ones used to prove $\eqref{m2}$ allow us to deduce that $\lim_{n \rightarrow \infty} \mathcal{E}_{\tau,T}^{g} \left[\Phi(\Tilde{\mathcal{M}}_T^{n})\right]=\mathcal{E}_{\tau,T}^{g} \left[\Phi(\Tilde{\mathcal{M}}_T)\right]$a.s. By letting $n$ tend to $\infty$ in \eqref{BV} we conclude:
\begin{align} \label{ee2}
\mathcal{Y}_\tau^{\alpha} \geq \mathcal{E}_{\tau,T}^{g} \left [ \Phi(\Tilde{\mathcal{M}}_T) \right].
\end{align}
From \eqref{ee1}, \eqref{ee2}, the non-decreasing monotonicity of the map $\Phi$ and the comparison theorem for BSDEs, we finally get:
\begin{align}
\mathcal{Y}_\tau^{\alpha} \geq \mathcal{E}_{\tau,T}^{g} \left [ \Phi(\cal{M}_{T}^{\hat{\alpha}}) \right].
\end{align}
The equality follows by definition of  $\mathcal{Y}_\tau^{\alpha}$ and $\hat{\alpha}$ is hence the optimal control. \\
In order to show the second equality $\mathcal{Y}_\tau^{\alpha}=\mathcal{E}_{\tau,\tau'}^g \left[ \mathcal{Y}_{\tau'}^{\hat{\alpha}^{\tau,\alpha}} \right], \,\,\, \forall \tau' \in \mathcal{T}_{\tau}$, we first observe that $\mathcal{Y}_\tau^{\alpha}=\mathcal{E}_{\tau,\tau'}^g \left[ \mathcal{E}_{\tau',T}^g[\Phi(\mathcal{M}_T^{\hat{\alpha}})] \right] \geq \mathcal{E}_{\tau,\tau'}^g \left[\mathcal{Y}_{\tau'}^{\hat{\alpha}}\right],$ by definition of the value function $\mathcal{Y}_{\tau'}^{\hat{\alpha}}$ and the comparison theorem. As above, there exists $(\hat{\alpha}^n) \in \textbf{A}_{\tau'}^{\hat{\alpha}}$ such that $\mathcal{E}_{\tau',T}^g[\Phi(\mathcal{M}_T^{\hat{\alpha}^n})] \rightarrow \mathcal{Y}_{\tau'}^{\hat{\alpha}}$ a.s. By \eqref{B}, the convergence also holds  in $\textbf{L}_2$. The a priori estimates on BSDEs give: $\mathcal{Y}_\tau^{\alpha} \leq \mathcal{E}_{\tau,\tau'}^g \left[ \mathcal{E}_{\tau',T}^g[\Phi(\mathcal{M}_T^{\hat{\alpha}^n})] \right] \rightarrow \mathcal{E}_{\tau,\tau'}^g[\mathcal{Y}_{\tau'}^{\hat{\alpha}}].$
\end{proof}

\begin{remark}
Note that in $\cite{BER}$, the optimal control is obtained directly by using the martingale representation of $\Tilde{\mathcal{M}}_T$,  due to the linearity of the expectation. In our nonlinear case, that is no longer possible and we need a more complicated construction of the optimal control.
\end{remark}

\section{Properties of the value function}

In this section, we study the continuity and the convexity (defined in a probabilistic sense) of the map $\mathcal{Y}_t(\mu):=\mathcal{Y}(t,\mu)$ with respect to the threshold $\mu$, for any $t <T$.

%\subsection{Monotonicity}
%
%In this paragraph, we show the monotonicity property of the value function $\mathcal{Y}_t.$ In the case of two thresholds $\mu_1, \mu_2$ such that $\mu_1 \leq \mu_2$ a.s., one can easily remark that $\mathcal{Y}_t(\mu_1) \geq \mathcal{Y}_t(\mu_2)$ a.s., by only using the definition of a weak BSDE. Since in the case when $\mu_1 \leq \mu_2$ on $A$ and $\mu_1 \geq \mu_2$ on $A^c$, this property may not be so obvious, we present below a short proof.
%

%We start by giving the definition of monotonicity in our particular context.

\subsection{Continuity}
Fix $t \in [0,T]$. We give below an estimate on the map $\mu \rightarrow \mathcal{Y}_t(\mu)$, ensuring its continuity under some weak assumptions on the map $\Phi$ ( e.g. $\Phi$ is Lipschitz continuous with respect to $x$, uniformly in $\omega$ or deterministic continuous). We obtain a more natural  bound for $|\mathcal{Y}_t(\mu_1)-\mathcal{Y}_t(\mu_2)|$ than the one provided in the case of classical expectations constraints ( see \cite{BER}), which is expressed only through the spread $|\mu_1-\mu_2|^{\frac{1}{2}}$ ( in $\cite{BER}$  it depends on $(1-\frac{\mu_1}{\mu_2})\textbf{1}_{\mu_1<\mu_2}+\frac{\mu_1-\mu_2}{1-\mu_2}\textbf{1}_{\mu_1>\mu_2}$; $(1-\frac{\mu_2}{\mu_1})\textbf{1}_{\mu_2<\mu_1}+\frac{\mu_2-\mu_1}{1-\mu_1}\textbf{1}_{\mu_1<\mu_2}$ and on other two terms related to the case when the thresholds take the boundary values $0$ and $1$). Moreover, our proof is based on BSDEs techniques,  allowing to treat the nonlinear case, contrary to \cite{BER}, where the arguments hold only in the case of linear constraints.\\

\begin{theorem}\label{cont}
Let $t < T$, $ \text {and } \mu_{1},  \mu_{2}\in \textbf{D}_t$.\\
Then $|\mathcal{Y}_t(\mu_1)-\mathcal{Y}_t(\mu_2)| \leq Err_t \left(  \Delta(\mu_1, \mu_2)\right)$, where $\Delta(\mu_1, \mu_2)=C|\mu_1-\mu_2|^{\frac{1}{2}},$ with $C$ a constant depending only on $(C_f,T)$ and
\begin{align}
Err_t(\xi):=  \esssup \{\mathcal{R}_t(M,M'): M,M' \in \textbf{L}_0([0,1]), \textbf{E}_t[|M-M'|^2] \leq \xi  \},
\end{align}
where $\xi \in \textbf{L}_2(\mathbf{R}, \mathcal{F}_t)$ and $\mathcal{R}_t(M,M'):=|\mathcal{E}_{t,T}^g[\Phi(M)]-\mathcal{E}_{t,T}^g[\Phi(M')]|.$
\end{theorem}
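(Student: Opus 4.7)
The plan is to show $\mathcal{Y}_t(\mu_1)-\mathcal{Y}_t(\mu_2) \leq Err_t(\Delta(\mu_1,\mu_2))$ and then swap the roles of $\mu_1,\mu_2$. Given $\varepsilon>0$, I would first select an $\varepsilon$-optimal control $\alpha^{(2)}\in\textbf{A}_{t,\mu_2}$ satisfying $\mathcal{E}_{t,T}^g[\Phi(\mathcal{M}_T^{t,\mu_2,\alpha^{(2)}})]\leq \mathcal{Y}_t(\mu_2)+\varepsilon$ a.s., and then transfer it to an admissible control $\alpha^{(1)}\in\textbf{A}_{t,\mu_1}$ whose terminal controlled threshold is close to $\mathcal{M}_T^{t,\mu_2,\alpha^{(2)}}$ in the $\textbf{E}_t[|\cdot|^2]$-sense captured by $Err_t$. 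Since $\mathcal{Y}_t(\mu_1)\leq \mathcal{E}_{t,T}^g[\Phi(\mathcal{M}_T^{t,\mu_1,\alpha^{(1)}})]$, once the two BSDE $g$-expectations are close in the prescribed sense, sending $\varepsilon\downarrow 0$ will yield the claim.

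To construct $\alpha^{(1)}$, I would use an exit-time switching: run the $f$-SDE from $\mu_1$ with control $\alpha^{(2)}$, let
\[
\theta:=\inf\{s\geq t:\ \mathcal{M}^{t,\mu_1,\alpha^{(2)}}_s\notin(Y^0_s,Y^1_s)\}\wedge T,
\]
and set $\alpha^{(1)}=\alpha^{(2)}$ on $[\![t,\theta]\!]$ and $\alpha^{(1)}=Z^i$ on $(\theta,T]$, where $i\in\{0,1\}$ labels the boundary hit at time $\theta$. Then $\mathcal{M}^{t,\mu_1,\alpha^{(1)}}$ coincides with $Y^i$ after $\theta$ and therefore stays in $[Y^0,Y^1]$, so $\alpha^{(1)}\in\textbf{A}_{t,\mu_1}$. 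This is the natural analogue (in the nonlinear setting) of the truncation/reflection scheme used to prove stability in weak target problems.

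The core quantitative step I would aim at is
\[
\textbf{E}_t\!\left[\bigl|\mathcal{M}_T^{t,\mu_1,\alpha^{(1)}}-\mathcal{M}_T^{t,\mu_2,\alpha^{(2)}}\bigr|^2\right]\leq C\,|\mu_1-\mu_2|,
\]
which, since $|\mu_1-\mu_2|$ is bounded on $\textbf{D}_t\times\textbf{D}_t$, is dominated by $\Delta(\mu_1,\mu_2)=C|\mu_1-\mu_2|^{1/2}$ after adjusting constants. I would split along $\{\theta=T\}$ and $\{\theta<T\}$. On $\{\theta=T\}$ the two controls coincide and the difference of the two $f$-SDEs has vanishing $dW$-term, so a pathwise Gronwall based on the Lipschitz property of $f$ gives $|\mathcal{M}_T^{t,\mu_1,\alpha^{(1)}}-\mathcal{M}_T^{t,\mu_2,\alpha^{(2)}}|\leq e^{C_f T}|\mu_1-\mu_2|$. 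On $\{\theta<T\}$ the same Gronwall only yields $|\mathcal{M}^{t,\mu_2,\alpha^{(2)}}_\theta-Y^i_\theta|\leq C|\mu_1-\mu_2|$ at the exit, and propagating this $\theta$-level bound to $T$ is the main obstacle. Since $\mathcal{M}^{t,\mu_2,\alpha^{(2)}}$ and $Y^i$ are both $f$-martingales on $[\theta,T]$ with terminal values in $[0,1]$ whose $f$-expectations at $\theta$ are $C|\mu_1-\mu_2|$-close, I would linearize the $f$-BSDE difference via the Lipschitz property of $f$ to obtain an equivalent probability $\widetilde Q$ under which the difference becomes a true conditional expectation; this yields $\textbf{E}^{\widetilde Q}_\theta\bigl[|\mathcal{M}_T^{t,\mu_2,\alpha^{(2)}}-Y^i_T|\bigr]\leq C|\mu_1-\mu_2|$, and the $[0,1]$-boundedness of the integrand upgrades this $L^1$-type control to the required $L^2$-bound at the same rate. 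A tower/change-of-measure step brings the estimate back under $\textbf{E}_t$. With this $\textbf{E}_t[|\cdot|^2]$-estimate in hand, the very definition of $Err_t$ furnishes $\bigl|\mathcal{E}_{t,T}^g[\Phi(\mathcal{M}_T^{t,\mu_1,\alpha^{(1)}})]-\mathcal{E}_{t,T}^g[\Phi(\mathcal{M}_T^{t,\mu_2,\alpha^{(2)}})]\bigr|\leq Err_t(\Delta(\mu_1,\mu_2))$, closing the argument. The post-exit linearization is exactly what replaces the martingale-representation shortcut available in the linear case of \cite{BER}, which is unavailable once $f\neq 0$.
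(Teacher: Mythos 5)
Your proposal is correct in substance and rests on the same analytic engine as the paper's proof: an exit-time switching construction to transport an (almost) optimal control from one threshold to the other while preserving admissibility, followed by a comparison-theorem-style linearization of the $f$-dynamics, a Cauchy--Schwarz step against the inverse of the adjoint exponential $H$ (whose coefficients are bounded by $C_f$), and the $[0,1]$-boundedness to upgrade the $L^1$-control to the $L^2$-control required by $Err_t$. The organization differs in two ways. First, the paper begins by reducing to ordered thresholds $\tilde\mu_2:=\mu_1\wedge\mu_2\leq\tilde\mu_1:=\mu_1\vee\mu_2$ via the monotonicity theorem and the locality Lemma \ref{R}; this means only the upper boundary $\mathcal{E}^f_{\cdot,T}[1]$ can be hit and the terminal difference $\mathcal{M}_T^{\tilde\mu_1,\tilde\alpha^n}-\mathcal{M}_T^{\tilde\mu_2,\alpha^n}$ has a global sign, whereas you handle both boundaries and must argue the sign separately on each exit event (it does hold there, precisely because $Y^i_T=i$ is an endpoint of $[0,1]$ --- you should make this explicit, since extracting $\textbf{E}_\theta[H|\xi_1-\xi_2|]$ from $|\textbf{E}_\theta[H(\xi_1-\xi_2)]|$ needs it). Second, the paper performs a single linearization of the difference of the two controlled SDEs over all of $[t,T]$, with the $\beta^n$ term absorbing the discrepancy between the two controls after the switch, rather than your two-stage Gronwall-before-$\theta$ / linearize-after-$\theta$ split; the one-shot version is shorter but the content is the same.

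One quantitative point needs correcting: your target estimate $\textbf{E}_t\bigl[|\mathcal{M}_T^{t,\mu_1,\alpha^{(1)}}-\mathcal{M}_T^{t,\mu_2,\alpha^{(2)}}|^2\bigr]\leq C|\mu_1-\mu_2|$ is not what your own argument delivers. Passing from the weighted bound $\textbf{E}_\theta[H\,|\delta_T|]\leq C|\mu_1-\mu_2|$ back to an unweighted expectation via $\textbf{E}_\theta[|\delta_T|]\leq \textbf{E}_\theta[H^{-1}]^{1/2}\,\textbf{E}_\theta[H\,\delta_T^2]^{1/2}$ costs a square root, so the correct output is $\textbf{E}_t[|\delta_T|^2]\leq C|\mu_1-\mu_2|^{1/2}$ --- exactly the rate built into $\Delta(\mu_1,\mu_2)$, which is why the theorem is stated with the exponent $\tfrac12$. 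Since you only need domination by $\Delta$, this slip does not break the proof, but the intermediate claim as written is false and the "same rate" assertion after the change of measure should be dropped.
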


\begin{proof}
We define $\Tilde \mu_1:=\mu_1 \vee \mu_2$ and $\Tilde{\mu_2}:= \mu_1 \wedge \mu_2.$
By the monotonocity property of the map $\mu \rightarrow \mathcal{Y}_t(\mu)$ (\ref{monotonicity}), we have $\mathcal{Y}_t(\Tilde{\mu_1}) \geq \mathcal{Y}_t(\Tilde{\mu_2})$ a.s.\\
By Lemma \ref{sequence}, it exists $\alpha^n \in \textbf{A}_{t,\Tilde{\mu}_2}$ such that $\lim_{n \rightarrow \infty}\mathcal{E}_{t,T}^g[\Phi(\mathcal{M}^{\Tilde{\mu}_2,\alpha^n})]=\mathcal{Y}_t(\Tilde{\mu}_2)$ a.s.
Fix $n \in \mathbb{N}$. We now construct an admissible control  $\Tilde{\alpha}^n \in \textbf{A}_{t,\Tilde{\mu}_1} $ such that $\mathcal{M}^{\Tilde{\mu}_1, \Tilde{\alpha}^n}_s \in \left[\mathcal{M}^{\Tilde{\mu}_2, \alpha^n}_s, \mathcal{E}^f_{s,T}[1]\right]$, $t \leq s \leq T,$ a.s. It is defined as follows:
$$\Tilde{\a}^{n}_s:= \a_s^n \mb{1}_{ \{ s \leq \tau \}} + Z_s^1 \textbf{1}_{ \{ s >\tau\}},$$
where  $\tau:= \inf \{ s \in [t,T]: \cal{M}_{s}^{\Tilde{\mu}_{1},\a^{n}} = \cal{E}_{s}^{f}[1]\} \wedge T$, with the convention $\inf \emptyset=+\infty$. Recall that $Z^1$ corresponds to the control associated to the BSDE of terminal condition $1$ and driver $f$.

%  Let us denote by $(\mathcal{M}_{\cdot}^{\Tilde{\mu}_1, \bar{\alpha}, \bar{\beta}}, \bar{\alpha}, \bar{\beta}) \in \mathcal{S}^2 \times \mathcal{H}^2 \times \mathcal{H}^2_{\nu}$ the solution of the BSDE associated to the terminal condition $(1-\bar{\lambda})\mathcal{M}_T^{\Tilde{\mu}_2, \alpha_2, \beta^_2}}+\bar{\lambda}1$ and driver $f$, where $\Bar{\lambda}$ represents the measurable map constructed at Step 1.
By definition of the value function $\mathcal{Y}_t$, we get:
\begin{align}\label{eqp}
\mathcal{Y}_t(\Tilde{\mu_1}) \leq \mathcal{E}_{t,T}^g[\Phi(\mathcal{M}_T^{\Tilde{\mu}_1, \Tilde{\alpha}^n})]=\mathcal{E}_{t,T}^g[\Phi(\mathcal{M}_T^{\Tilde{\mu}_1, \Tilde{\alpha}^n})]-\mathcal{E}_{t,T}^g[\Phi(\mathcal{M}_T^{\Tilde{\mu}_2, \alpha^n})]+\mathcal{E}_{t,T}^g[\Phi(\mathcal{M}_T^{\Tilde{\mu}_2, \alpha^n})] .
\end{align}
Let us now estimate $\textbf{E}_t[|\mathcal{M}_T^{\Tilde{\mu}_1, \Tilde{\alpha}^n}-\mathcal{M}_T^{\Tilde{\mu}_2, \alpha^n}|^2].$\\
%The Lipchitz property of the map $\Phi$ and the estimates on BSDEs lead to:
Since $\mathcal{M}_T^{\Tilde{\mu}_1, \Tilde{\alpha}^n}$ and $\mathcal{M}_T^{\Tilde{\mu}_2, \alpha^n}$ belong to $[0,1]$ and by construction $\mathcal{M}_T^{\Tilde{\mu}_1, \Tilde{\alpha}^n} \geq \mathcal{M}_T^{\Tilde{\mu}_2, \alpha^n}$ a.s., we obtain:
\begin{equation}\label{eq1}
\textbf{E}_t[|\mathcal{M}_T^{\Tilde{\mu}_1, \Tilde{\alpha}^n}-\mathcal{M}_T^{\Tilde{\mu}_2, \alpha^n}|^2] \leq \textbf{E}_t[\mathcal{M}_T^{\Tilde{\mu}_1, \Tilde{\alpha}^n}-\mathcal{M}_T^{\Tilde{\mu}_2, \alpha^n}].
\end{equation}
A similar linearization technique as in the  proof of the Comparison Theorem for BSDEs (see for e.g. \cite{15}) yields:
\begin{align}\label{eqs2}
\Tilde{\mu}_1-\Tilde{\mu}_2 \geq \textbf{E}_t \left[ H_{t,T}^n(\mathcal{M}_T^{\Tilde{\mu}_1, \Tilde{\alpha}^n}-\mathcal{M}_T^{\Tilde{\mu}_2, \alpha^n})| \mathcal{F}_t \right]\, \text{ a.s.},
\end{align}
where $(H^n_{t,s})_{s \in [t,T]}$ is the square integrable process satisfying
\begin{align*}
dH^n_{t,s}=H_{t,s}^n\left[\delta_s^n ds+ \beta_s^n dW_s \right]; \,\,\, H^n_{t,t}=1,
\end{align*}
with
\begin{equation*}
\begin{cases}
\delta_t^n:=\displaystyle\frac{f(t, \mathcal{M}_t^{\Tilde{\mu}_1, \Tilde{\alpha}^n},\Tilde{\alpha}_t^n )-f(t, \mathcal{M}_t^{\Tilde{\mu}_2, \alpha^n},\Tilde{\alpha}_t^n )}{\mathcal{M}_t^{\Tilde{\mu}_1, \Tilde{\alpha}^n}-\mathcal{M}_t^{\Tilde{\mu}_2, \alpha^n}}\textbf{1}_{\{ \mathcal{M}_t^{\Tilde{\mu}_1, \Tilde{\alpha}^n} \neq \mathcal{M}_t^{\Tilde{\mu}_2, \alpha^n}\}}; \\\displaystyle
\beta_t^n:=\frac{f(t, \mathcal{M}_t^{\Tilde{\mu}_2, \alpha^n},\Tilde{\alpha}_t^n )-f(t, \mathcal{M}_t^{\Tilde{\mu}_2, \alpha^n},\alpha_t^n )}{|\Tilde{\alpha}_t^n-\alpha_t^n|^2}(\Tilde{\alpha}_t^n-\alpha_t^n)\textbf{1}_{\Tilde{\alpha}_t^n \neq \alpha_t^n}.
\end{cases}
\end{equation*}

%In order to simplify notation, we denote in the sequel $\mathcal{M}_T^{\Tilde{\mu}_1, \bar{\alpha}, \bar{\beta}}$ by $\xi_1$ and $\mathcal{M}_T^{\Tilde{\mu}_2, \alpha_2, \beta_2}$ by $\xi_2$.\\
%We estimate $E[\xi_1-\xi_2]$.\\
%We have:\\
%$$-d(\mathcal{M}^1_t-\mathcal{M}^2_t)=h_t dt-(\Bar{\alpha}_t-\alpha^2_t) dW_t-\int_{\textbf{E}}\left(\Bar{\beta}_t(e)-\beta^2_t(e)\right)\Tilde{N}(dt,de), $$
%with
%$$h_t:=f(t, \mathcal{M}_t^1, \Bar{\alpha}_t,\Bar{\beta}_t)-f(t, \mathcal{M}_t^2, \alpha_t^2,\beta_t^2). $$
%For each $t \in [0,T]$, let $(\Gamma_{t,s})_{s \in [t,T]}$ be the unique solution of the following SDE:
%$$d \Gamma_{t,s}=\Gamma_{t,s^-}[\delta_s ds+ l_s dW_s + \int_{\mathbf{R}^*} \gamma_s(u) \Tilde{N}(ds,du)], $$
%where
%$$\delta_t:=\frac{f(t,X_{t^-}^1,\Bar{\alpha}_t, \Bar{\beta}_t)-f(t,X_{t^-}^2,\alpha_t^{2}, \beta_t^{2})}{X_t^1-X_t^2} \textbf{1}_{\{X_t^1-X_t^2 \neq 0 \}} $$
%and
%$$l_t:= \frac{f(t,X_{t^-}^2,\Bar{\alpha}_t, \Bar{\beta}_t)-f(t,X_{t^-}^2,\alpha_t^{2}, \beta_t^{2})}{\Bar{\alpha}_t-\alpha_t^2} \textbf{1}_{\{\Bar{\alpha}_t-\alpha_t^2 \neq 0 \}}. $$
%By using the hypothesis on the driver $f$, we obtain:
%$$h_t \geq \delta_t(\mathcal{M}^1_t-\mathcal{M}^2_t)+l_t(\Bar{\alpha}_t-\alpha^2_t)+<\gamma_t,\Bar{\beta}_t-\beta_t^2>_{\nu}, \text{ } dt \otimes dP \text{ a.s. }. $$
%By the comparison result with respect to a linear BSDE, we get:
%$$\mathcal{M}^1_t-\mathcal{M}^2_t \geq E[\Gamma_{t,T} (\mathcal{M}^1_T-\mathcal{M}^2_T)]. $$
 Now, from $\eqref{eq1}$ and  the H\"{o}lder  inequality, we obtain:
\begin{align}
\textbf{E}_t[\mathcal{M}_T^{\Tilde{\mu}_1, \Tilde{\alpha}^n}-\mathcal{M}_T^{\Tilde{\mu}_2, \alpha^n}]&=\textbf{E}_t[(H_{t,T}^{n})^{-\frac{1}{2}}(H_{t,T}^{n})^{\frac{1}{2}}( \mathcal{M}_T^{\Tilde{\mu}_1, \Tilde{\alpha}^n}-\mathcal{M}_T^{\Tilde{\mu}_2, \alpha^n})] \nonumber\\&\leq 
\textbf{E}_t\left[(H_{t,T}^{n})^{-1}\right]^{\frac{1}{2}} \textbf{E}_t\left[H_{t,T}^n(\mathcal{M}_T^{\Tilde{\mu}_1, \Tilde{\alpha}^n}-\mathcal{M}_T^{\Tilde{\mu}_2, \alpha^n})^2 \right]^{\frac{1}{2}}.
\end{align}
Note that $(\delta^n)_n, (\beta^n)_n$ are predictable process bounded by $C_f$, the Lipschitz constant of $f$. We thus have for all $n \in \mathbb{N}$, $\textbf{E}_t\left[(H_{t,T}^{n})^{-1}\right]\leq C$, for some $C>0$ depending on $C_f$ and $T$ (by the properties of exponential martingales).\\
The above relation together with \eqref{eqs2} and the fact that $\mathcal{M}_T^{\Tilde{\mu}_1, \Tilde{\alpha}^n}-\mathcal{M}_T^{\Tilde{\mu}_2, \alpha^n}$ takes values in $[0,1]$ a.s., imply:
\begin{align}\label{eqp1}
\textbf{E}_t[|\mathcal{M}_T^{\Tilde{\mu}_1,  \Tilde{\alpha}^n}-\mathcal{M}_T^{\Tilde{\mu}_2, \alpha^n}|^2] \leq C (\Tilde{\mu}_1-\Tilde{\mu}_2)^{\frac{1}{2}},
\end{align}
where $C$ is a constant depending on the Lipschitz constant of the driver $f$.\\
By letting $n$ tend to infinity in inequality \eqref{eqp} and using \eqref{eqp1},  we get:
\begin{equation}\label{concl}
|\mathcal{Y}_t(\Tilde{\mu}_1)-\mathcal{Y}_t(\Tilde{\mu}_2)| \leq Err_t \left( \Delta(\Tilde{\mu}_1,\Tilde{\mu}_2) \right).
\end{equation}
Same arguments as in Step 2 of the proof of Theorem \ref{monotonicity} lead to:
\begin{equation}
|\mathcal{Y}_t(\mu_1)-\mathcal{Y}_t(\mu_2)| \leq Err_t \left( \Delta(\mu_1,\mu_2) \right).
\end{equation}

%\
%
%
%
%\end{align}
\end{proof}

\subsection{Convexity}

In this section, we provide a convexity result adapted to the non-markovian setting which is established for the map $\mu \rightarrow \mathcal{Y}_t(\mu)$, for any $t<T$.
We extend the results of \cite{BER} to the case of nonlinear constraints, which lead to nontrivial additional technicalities. An important difficulty in our context is represented by the fact that the admissibility set is given by the two processes $\mathcal{E}^f[0]$ and $\mathcal{E}^f[1]$, contrary to $\cite{BER}$ where it is given by the two constants $0$ and $1$.\\

We first recall the notion of $\mathcal{F}_t$ - convexity introduced in \cite{BER}.

\begin{definition}[$\mathcal{F}_t$-convexity]

\begin{itemize}
\item[$(i)$] We say that a subset $D \subset \textbf{L}_{2}(\mathbf{R}, \mathcal{F}_t)$ is $\mathcal{F}_t$-convex if for all $\mu_1, \mu_2 \in D$ and $\lambda \in \textbf{L}_0([0,1], \mathcal{F}_t)$, $\lambda \mu_1+(1-\lambda)\mu_2 \in D$.
\item[$(ii)$] Let $D$ be an $\mathcal{F}_t$-convex subset of $\textbf{L}_{2}(\mathbf{R}, \mathcal{F}_t)$. A map $\mathcal{J}:D \mapsto \textbf{L}_2(\mathbf{R}, \mathcal{F}_t)$ is said to be $\mathcal{F}_t$-convex if
$$Epi(\mathcal{J}):=\{(\mu,Y) \in D \times \textbf{L}_2(\mathbf{R}, \mathcal{F}_t): Y \geq \mathcal{J}(\mu)   \}$$
is $\mathcal{F}_t$-convex.
\item[$(iii)$] Let $Epi^c(\mathcal{J})$ be the set of elements of the form $\sum_{n \leq N} \lambda_n(\mu_n, Y_n)$ with $(\mu_n, Y_n, \lambda_n)_{n \leq N} \subset Epi(\mathcal{J}) \times \textbf{L}_0([0,1], \mathcal{F}_t)$ such that $\sum_{n \leq N} \lambda_n =1$, for some $N \geq 1.$ We then denote by $\overline{Epi}^c(\mathcal{J})$ its closure in $\textbf{L}_2$. The $\mathcal{F}_t$-convex envelope of $\mathcal{J}_t$ is defined as
\begin{align}
\mathcal{J}_t^c(\mu):=ess\inf \{Y \in \textbf{L}_2(\mathbf{R}, \mathcal{F}_t): (\mu,Y) \in  \overline{Epi}^c(\mathcal{J}_t)\}.
\end{align}
\end{itemize}
\end{definition}

\begin{assumption}\label{assumpt1}
We assume that the map $\Phi$ is Lipschitz continuous in $x$, uniformly with respect to $\omega$.
\end{assumption}

\begin{proposition}
Under Assumption \ref{assumpt1}, the map $\mu \in \textbf{D}_t  \mapsto \mathcal{Y}_{t}(\mu)$ is $\mathcal{F}_t$-convex, for all $t<T$.
\end{proposition}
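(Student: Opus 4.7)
The plan is to verify $\mathcal{F}_t$-convexity of the epigraph $\mathrm{Epi}(\mathcal{Y}_t)$ directly. The argument will tacitly invoke the hypotheses $\mathbf{(H_{conv})}$ on $\Phi,g$ and $\mathbf{(H_{conc})}$ on $f$ from the previous section, without which convexity of $\mathcal{Y}_t$ has no reason to hold (just as in the linear setting of \cite{BER}, where convexity of $\Phi$ and $g$ is imposed). Fix $(\mu_i, Y_i) \in \mathrm{Epi}(\mathcal{Y}_t)$, $i = 1, 2$, and $\lambda \in \textbf{L}_0([0,1], \mathcal{F}_t)$, and set $\tilde\mu := \lambda \mu_1 + (1-\lambda) \mu_2 \in \textbf{D}_t$; by definition of the epigraph it is enough to establish $\mathcal{Y}_t(\tilde\mu) \leq \lambda \mathcal{Y}_t(\mu_1) + (1-\lambda) \mathcal{Y}_t(\mu_2)$ a.s.

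First, by Lemma \ref{sequence} I pick, for each $\epsilon>0$, near-optimal controls $\alpha^{i,\epsilon} \in \textbf{A}_{t,\mu_i}$ with $\mathcal{E}^g_{t,T}[\Phi(\mathcal{M}^{t,\mu_i,\alpha^{i,\epsilon}}_T)] \leq \mathcal{Y}_t(\mu_i) + \epsilon$, and I set $M^i := \mathcal{M}^{t,\mu_i,\alpha^{i,\epsilon}}$, the candidate control $\tilde\alpha := \lambda \alpha^{1,\epsilon} + (1-\lambda) \alpha^{2,\epsilon}$ (predictable on $[t,T]$ since $\lambda$ is $\mathcal{F}_t$-measurable), and the target trajectory $\tilde M := \lambda M^1 + (1-\lambda) M^2$, so that $\tilde M_t = \tilde\mu$ and $\tilde M_T \in [0,1]$.

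Letting $N := \mathcal{M}^{t,\tilde\mu,\tilde\alpha}$, I next compare $N$ with $\tilde M$: both have diffusion coefficient $\tilde\alpha$ and start at $\tilde\mu$, so $D := N - \tilde M$ satisfies $D_t = 0$ and has no stochastic integral part. Applying $\mathbf{(H_{conc})}$ pointwise gives $\lambda f(s, M^1_s, \alpha^{1,\epsilon}_s) + (1-\lambda) f(s, M^2_s, \alpha^{2,\epsilon}_s) \leq f(s, \tilde M_s, \tilde\alpha_s)$, and linearizing the Lipschitz increment $f(s, N_s, \tilde\alpha_s) - f(s, \tilde M_s, \tilde\alpha_s) = a_s D_s$ with $|a_s|\leq C_f$ produces the pathwise inequality $dD_s \leq -a_s D_s\,ds$; the integrating factor $\exp(\int_t^{\cdot} a_r\, dr)$ then forces $N \leq \tilde M$ on $[t,T]$. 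In particular $N \leq \tilde M \leq Y^1$, but the lower bound $N \geq Y^0$ is not automatic and must be restored by hand: this is the genuinely new technicality anticipated in the paragraph preceding the statement and the main obstacle of the proof, because the admissibility envelope $[Y^0, Y^1]$ is now driven by nonlinear $f$-BSDEs rather than being the constant $[0,1]$ of \cite{BER}.

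To fix it I introduce $\sigma := \inf\{s\in[t,T]: N_s \leq Y^0_s\}\wedge T$ and splice the control, setting $\hat\alpha_s := \tilde\alpha_s \textbf{1}_{\{s\leq\sigma\}} + Z^0_s \textbf{1}_{\{s>\sigma\}}$, where $(Y^0, Z^0)$ denotes the solution of the BSDE with driver $f$ and terminal condition $0$. Pathwise uniqueness of the forward SDE defining $\hat N := \mathcal{M}^{t,\tilde\mu,\hat\alpha}$ forces $\hat N = N$ on $[t,\sigma]$ and $\hat N = Y^0$ on $[\sigma,T]$, whence $\hat N \in [Y^0, Y^1]$ so that $\hat\alpha \in \textbf{A}_{t,\tilde\mu}$, while $\hat N \leq \tilde M$ throughout. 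Using $\hat\alpha$ as a competitor in the essinf defining $\mathcal{Y}_t(\tilde\mu)$, and chaining (i) comparison for $g$-BSDEs together with monotonicity of $\Phi$, (ii) convexity of $\Phi$ from $\mathbf{(H_{conv})}$ at $\tilde M_T = \lambda M^1_T + (1-\lambda) M^2_T$ together with another application of comparison, and (iii) convexity of the operator $\xi \mapsto \mathcal{E}^g_{t,T}[\xi]$ -- itself a direct consequence of convexity of $g$ via the standard observation that the convex combination of two $g$-BSDE solutions solves a BSDE whose driver dominates $g$ evaluated at that combination -- I obtain
\begin{align*}
\mathcal{Y}_t(\tilde\mu) &\leq \mathcal{E}^g_{t,T}[\Phi(\hat N_T)] \leq \mathcal{E}^g_{t,T}[\Phi(\tilde M_T)] \\
&\leq \mathcal{E}^g_{t,T}[\lambda \Phi(M^1_T) + (1-\lambda)\Phi(M^2_T)] \\
&\leq \lambda \mathcal{E}^g_{t,T}[\Phi(M^1_T)] + (1-\lambda) \mathcal{E}^g_{t,T}[\Phi(M^2_T)] \\
&\leq \lambda \mathcal{Y}_t(\mu_1) + (1-\lambda) \mathcal{Y}_t(\mu_2) + \epsilon,
\end{align*}
and letting $\epsilon \to 0$ concludes.
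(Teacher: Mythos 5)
Your proof establishes a different, strictly weaker statement than the one claimed. The proposition asserts $\mathcal{F}_t$-convexity of $\mu\mapsto\mathcal{Y}_t(\mu)$ under Assumption \ref{assumpt1} alone (Lipschitz continuity of $\Phi$), with no convexity or concavity hypotheses on $\Phi$, $g$ or $f$. You import $\mathbf{(H_{conv})}$ and $\mathbf{(H_{conc})}$ from Section 4 and justify this by asserting that ``without which convexity of $\mathcal{Y}_t$ has no reason to hold (just as in the linear setting of \cite{BER}, where convexity of $\Phi$ and $g$ is imposed).'' This premise is incorrect: in \cite{BER}, and in the present paper, convexity of the value function in the threshold is \emph{not} a consequence of convexity of the data. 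It is a structural property of the weak constraint, coming from the possibility of \emph{randomizing} the threshold. Concretely, given $(\mu_n,\alpha_n)$ near-optimal for the thresholds $\mu_n$ and $\mathcal{F}_t$-measurable weights $\lambda_n$, one builds an $\mathcal{F}_{t+\varepsilon}$-measurable random variable $\zeta_N^\varepsilon$ equal to $\mathcal{M}^{\mu_n,\alpha_n}_{t+\varepsilon}$ with conditional probability $\lambda_n$ given $\mathcal{F}_t$, sets $\mu_N^\varepsilon:=\mathcal{E}^f_{t,t+\varepsilon}[\zeta_N^\varepsilon]$, and uses the dynamic programming principle, the continuity estimate of Theorem \ref{cont}, the zero-one law (Lemma \ref{R}) and the small-time comparison $\mathcal{E}^g_{t,t+\varepsilon}\approx \mathbf{E}_t$ (Proposition \ref{estimation}) to show $\mathcal{Y}_t(\mu_N^\varepsilon)\lesssim \sum_n\lambda_n\mathcal{Y}_t(\mu_n)$ and $\mu_N^\varepsilon\to\sum_n\lambda_n\mu_n$ as $\varepsilon\to 0$. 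This randomization is the key idea your argument misses, and it is precisely what makes the result true without $\mathbf{(H_{conv})}$ and $\mathbf{(H_{conc})}$. The paper's proof is then organized around the $\mathcal{F}_t$-convex envelope $\mathcal{Y}_t^c$, showing $\mathcal{Y}_t=\mathcal{Y}_t^c$.

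That said, under the extra hypotheses your argument is essentially sound and is a clean, direct route: the comparison $N\leq\tilde M$ via $\mathbf{(H_{conc})}$ and a Gronwall argument on $D=N-\tilde M$ (which has no martingale part), the splicing at the hitting time of $Y^0$ to restore admissibility, and the chain combining monotonicity and convexity of $\Phi$ with convexity of $\mathcal{E}^g$ are all correct. If the convexity assumptions on the coefficients were part of the statement, this would be an acceptable and arguably simpler proof. As a proof of the proposition actually stated, however, it has a genuine gap: it does not cover the general (non-convex $\Phi$, non-convex $g$, non-concave $f$) case, which is the whole content of the result.
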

The proof is divided in several steps. We follow the arguments used in the proof of Proposition 3.2 in \cite{BER} up to non trivial modifications due to the  nonlinearity of the driver $f$. The technical arguments specific to the nonlinear case are mostly needed in Step 5 of the proof. For  convienence of the reader, we also present the main ideas of Steps 1-4.\\

% . We give only the Proof of Step 5, which is more technical, involving  additional non trivial arguments, due to the nonlinearity .\\

\begin{proof}

1.
 $(\mu, \mathcal{Y}_t^c(\mu)) \in \overline{Epi}^c(\mathcal{Y}_t)$, for all $\mu \in \textbf{D}_t.$\\
For every fixed element $\mu \in \textbf{D}_t$, the family $F:=\{Y \in \textbf{L}_2(\mathbf{R}, \mathcal{F}_t): (\mu,Y) \in \overline{Epi}^c(\mathcal{Y}_t) \}$ is direct downward since $Y^1 \textbf{1}_{\{  Y^1 \leq Y^2\}}+Y^2 \textbf{1}_{\{Y^1>Y^2\}} \in F$ for all $Y^1$, $Y^2$, by $\mathcal{F}_t$-convexity of $\overline{Epi}^c(\mathcal{Y}_t)$. It then follows that we can find a sequence $(Y^n)_{n \geq 1} \subset F$ such that $Y^n \downarrow \mathcal{Y}_t^c(\mu)$ a.s. Moreover, $Y^1$ and $\mathcal{Y}_t^c(\mu)$ belong to $\textbf{L}_2$, and thus the monotone convergence Theorem leads to $Y^n \rightarrow \mathcal{Y}_t^c(\mu)$ in $\textbf{L}_2$, as $n$ goes to infinity. The set $\overline{Epi}^c(\mathcal{Y}_t)$ is closed in $\textbf{L}_2$ and hence the result follows.\\

2. Let $\eta \in \mathbf{S}_2$ be as in  $\eqref{B}$. Then, $|\mathcal{Y}_t^c(\mu)| \leq \eta_t$, for all $t \leq T$ and $\mu \in \textbf{D}_t.$\\
We first show that $\mathcal{Y}_t^c(\mu) \geq - \eta_t.$ By Point 1, it follows that $(\mu, \mathcal{Y}_t^c(\mu)) \in \overline{Epi}^c(\mathcal{Y}_t)$  is obtaind as $\textbf{L}_2$-limit of elements of the form $\sum_{n \leq N} \lambda_n(\mu_n,Y_n)$ with $(\mu_n, Y_n, \lambda_n) \subset Epi(\mathcal{Y}_t) \times \textbf{L}_0([0,1], \mathcal{F}_t),$ such that $\sum_{n \leq N} \lambda_n=1.$ Inequality $\ref{B}$  implies that each $Y^n$ of the above family is bounded below by $-\eta_t$ and hence this also holds for $\mathcal{Y}_t^c(\mu)$. The converse inequality $\mathcal{Y}_t^c \leq \eta_t$ is clear since \eqref{B} holds and, by construction, $\mathcal{Y} \geq \mathcal{Y}_t^c$.\\

3. The map $\mu \in \textbf{D}_t \mapsto \mathcal{Y}_t^c(\mu)$ is $\mathcal{F}_t$-convex.\\
We have the show that $Epi(\mathcal{Y}_t^c)$ is $\mathcal{F}_t-$convex. Let us fix $\mu^{1}, \mu^{2} \in \textbf{D}_t$ and $\lambda \in \textbf{L}_0([0,1], \mathcal{F}_t)$. Since $\overline{Epi}^c(\mathcal{Y}_t)$ is $\mathcal{F}_t$-convex and $(\mu^{i}, \mathcal{Y}_t^c(\mu^{i})) \in \overline{Epi}^c(\mathcal{Y}_t)$, for $i=1,2$, it follows that $(\lambda \mu^{1}+(1-\lambda)  \mu^{2}, \lambda \mathcal{Y}_t^c(\mu^1)+ (1-\lambda)\mathcal{Y}_t^c(\mu^2)) \in \overline{Epi}^c(\mathcal{Y}_t)$, and thus $\lambda \mathcal{Y}_t^c(\mu^1)+(1-\lambda)\mathcal{Y}_t^c(\mu^2) \geq \mathcal{Y}_t^c(\lambda \mu^1+(1-\lambda) \mu^2,$ by definition of $\mathcal{Y}_t^c(\mu)$. We obtain that $\lambda Y^1+(1-\lambda)Y^2 \geq \mathcal{Y}_t^c(\lambda \mu^1+ (1-\lambda)\mu^2)$, for any $Y^1, Y^2$ such that $(\mu^i,Y^{i}) \in Epi(\mathcal{Y}_t^c)$, $i=1,2$. The result follows.\\
\bigskip

4. $\mathcal{Y}_{t}(\mu) \geq \mathcal{Y}_t^c(\mu)$, for all $\mu \in \textbf{D}_t.$\\
Let $(\mu_n)_n \in \textbf{D}_t$ be such that $\mu_n \rightarrow \mu$ a.s. when $n \rightarrow \infty.$ Recall that under Assumption \ref{assumpt1}, the map $\mu \rightarrow \mathcal{Y}_t(\mu)$ is a.s. continuous and hence $\mathcal{Y}_t(\mu_n) \rightarrow \mathcal{Y}_t(\mu)$ a.s. when $n \rightarrow \infty$. Moreover, by \ref{B} we have $\mathcal{Y}_t(\mu_n) \rightarrow \mathcal{Y}_t(\mu)$ in $\mathbf{L}_2$. Note that $Epi(\mathcal{Y}_t) \subset \overline{Epi}^c(\mathcal{Y}_t)$ and thus $(\mu, \mathcal{Y}_{t}(\mu)) \in \overline{Epi}^c(\mathcal{Y}_t).$ The result follows by using the definition of $\mathcal{Y}_t^c$.\\

%Fix $\varepsilon>0$. We set $D_\mu^\varepsilon:= \{|\mu'-\mu| \leq \varepsilon, \,\, \mu' \in \textbf{D}_t\}.$ By Lemma \ref{sequence}, the family $\{\mathcal{Y}_t(\mu'): \,\, \mu' \in D_{\mu}^\varepsilon\}$ is directed downward. We can thus find a sequence $(\mu_n^\varepsilon)_{n \geq 1} \subset D_\mu^\varepsilon$ such that $\mathcal{Y}_t(\mu_n^\varepsilon) \rightarrow Z_\varepsilon(\mu):= ess \inf \{\mathcal{Y}_t(\mu'): \,\, \mu' \in D_\mu^{\varepsilon}\} \,\, \mathbb{P}-$a.s.
%Also, by \eqref{d}, we have $\lim_{N \rightarrow \infty} Z_{\frac{1}{N}}(\mu)=\mathcal{Y}_{t^*}(\mu)$. Since Remark \ref{B} holds, we have  $\mathcal{Y}_t(\mu_n^{\frac{1}{N}})_{n \rightarrow 1} \rightarrow_{n} Z_{\frac{1}{N}}(\mu)$ in $\textbf{L}_2$ and set
%$k_N:= \min \{n \geq 1: \|\mathcal{Y}_t(\mu_n^{\frac{1}{N}})-Z_{\frac{1}{N}}\|_{\textbf{L}_2} \leq \frac{1}{N}\}.$ We hence get $(\mu_{k_N}^{\frac{1}{N}}, \mathcal{Y}_t(\mu_{k_N}^{\frac{1}{N}}))$ in $\textbf{L}_2$ when $N \rightarrow \infty.$ 

\bigskip
5. $\mathcal{Y}_t^c(\mu) \geq \mathcal{Y}_{t}(\mu)$, for all $\mu \in \textbf{D}_t.$\\

(i) It follows from Point 1, that there exists a sequence
$$(\mu_n, Y_n, \lambda_n^N)_{n \geq 1, N \geq 1} \subset Epi(\mathcal{Y}_t) \times \textbf{L}_0([0,1], \mathcal{F}_t)$$
such that $\sum_{n \leq N} \lambda_n^N=1$, for all $N$, and
\begin{align}\label{conv}
(\hat{\mu}_N, \hat{Y}_N):=\sum_{n \leq N} \lambda_n^N(\mu_n,Y_n) \mapsto (\mu, \mathcal{Y}_t^c) \in \textbf{L}_2.
\end{align}

Fix $N \geq 1$ and $M \geq 1$. We claim that $\mathcal{Y}_t(\hat{\mu}_N) \leq \hat{Y}_N.$ The proof is postponed to Step 5, point (ii).
We deduce:
\begin{align*}
\lim \inf_{N \rightarrow \infty} \mathcal{Y}_t(\hat{\mu}_N) \leq \mathcal{Y}_t^c(\mu).
\end{align*}
We now define:
\begin{align*}
Z_M(\mu):= \essinf \{\mathcal{Y}_t(\mu'): \,\,\, |\mu'-\mu| \leq \frac{1}{M}\}.
\end{align*}
and set $D_{\mu}^{M}:= \{\mu' \in \mathbf{D}_t: \,\,|\mu'-\mu|\leq \frac{1}{M}\}.$
By Lemma \ref{sequence}, it exists a sequence $(\mu_n^M)_n$ with $\mu_n^M \in D_\mu^M$ for all $n$ such that
\begin{align}\label{eqconv}
\mathcal{Y}_t(\mu_n^M) \rightarrow Z_M(\mu) \text{ a.s. when } n \rightarrow \infty.
\end{align}
  
One can easily remark that under Assumption \ref{assumpt1}, the estimate given in Theorem \ref{cont}  becomes:
\begin{align}\label{eqconv1}
|\mathcal{Y}_t(\mu_n^M)-\mathcal{Y}_t(\mu)| \leq Err_t(\Delta |\mu_n^M-\mu|) \leq K |\mu_n^M-\mu|^{\frac{1}{4}} \leq K \frac{1}{M^\frac{1}{4}},
\end{align}
where $K$ is a constant depending on $C_f, T$ and the Lipschitz constant of $\Phi$.\\
Note that:
\begin{align*}
|\mathcal{Y}_t(\mu)-Z_M(\mu)|\leq |\mathcal{Y}_t(\mu)-\mathcal{Y}_t(\mu_n^M)|+|\mathcal{Y}_t(\mu_n^M)-Z_M(\mu)|.
\end{align*}

Coupling the above inequality with $\eqref{eqconv1}$and $\eqref{eqconv}$, letting first $n$ and then $M$ to $\infty$, we get 
\begin{align}\label{convconv}
Z_M(\mu) \rightarrow \mathcal{Y}_t(\mu) \text{ a.s. when } M \rightarrow +\infty.
\end{align}

Now, the convergence  $\hat{\mu}_N \rightarrow \mu$ a.s. and Lemma \ref{R} imply that:
\begin{align}
Z_M(\mu) \leq \lim \inf_{N \rightarrow \infty} \mathcal{Y}_t(\bar{\mu}_N)= \lim \inf_{N \rightarrow \infty} \left(\mathcal{Y}_t(\hat{\mu}_N) \textbf{1}_{|\hat{\mu}_N-\mu| \leq \frac{1}{M}}+\mathcal{Y}_t(\mu) \textbf{1}_{|\hat{\mu}_N-\mu|>\frac{1}{M}}\right) \leq \mathcal{Y}_t^c(\mu),
\end{align}
where:
$$\bar{\mu}_N:= \hat{\mu}_N \textbf{1}_{|\hat{\mu}_N-\mu| \leq\frac{1}{M}}+\mu \textbf{1}_{|\hat{\mu}_N-\mu|>\frac{1}{M}} \in D_{\mu}^{M}.$$
Also, since by \eqref{convconv}, $Z_M(\mu) \uparrow \mathcal{Y}_{t}(\mu)$ as $M$ goes to $+\infty$, the result follows.\\

(ii)
It remains to prove:
\begin{align}\label{estim}
\mathcal{Y}_t(\hat{\mu}_N) \leq \hat{Y}_N.
\end{align}

Fix $\varepsilon >0$. Let us consider a random variable, $\mathcal{F}_{t+\varepsilon}$ measurable $\zeta_N^{\varepsilon}$ such that $P\left[\zeta_N^{\varepsilon}= \mathcal{M}_{t+\varepsilon}^{\mu_n, \alpha_n}| \mathcal{F}_t\right]=\lambda_n^N$, where $\alpha_n \in \textbf{A}_{t, \mu_n}$. Clearly, by construction, $\zeta_N^{\varepsilon}$ belongs to $\left[\mathcal{E}_{t+\varepsilon,T}^f[0], \mathcal{E}_{t+\varepsilon,T}^f[1]\right]$ a.s.
We set:
\begin{align}\label{defmu}
\mu_{N}^{\varepsilon}:= \mathcal{E}_{t,t+\varepsilon}^f \left[\zeta_N^\varepsilon \right].
\end{align}
We rewrite $\mathcal{Y}_t(\hat{\mu}_N)$ as follows:
\begin{align}
\mathcal{Y}_t(\hat{\mu}_N)= \mathcal{Y}_t(\hat{\mu}_N)-\mathcal{Y}_t({\mu}_N^\varepsilon)+\mathcal{Y}_t({\mu}_N^\varepsilon)
%\leq C Err_t(\Delta(\hat{\mu}_N-\mu_N^\varepsilon))+\mathcal{Y}_t({\mu}_N^\varepsilon).
\end{align}
and by appealing to  Theorem $\ref{cont}$, we obtain:
\begin{align}\label{em1}
\mathcal{Y}_t(\hat{\mu}_N) \leq Err_t(\Delta(\hat{\mu}_N-\mu_N^\varepsilon))+\mathcal{Y}_t({\mu}_N^\varepsilon).
\end{align}

We now show that $\lim \sup_{\varepsilon \rightarrow 0} \left[Err_t(\Delta(\hat{\mu}_N-\mu_N^\varepsilon))+\mathcal{Y}_t({\mu}_N^\varepsilon)\right]\leq \hat{Y}_N.$

To this purpose, we split the proof in several steps:

\bigskip

\textbf{Step a.}  We prove that $\lim_{\varepsilon \rightarrow 0}Err_t(\Delta(\hat{\mu}_N-\mu_N^\varepsilon))=0$ a.s.\\

We start by showing that
$\lim_{\varepsilon \rightarrow 0} \mu_N^{\varepsilon}= \hat{\mu}_N$ a.s.

Since $(\mu_n)_{n \leq N}$ are $\mathcal{F}_t$-measurable and $P\left[\zeta_N^{\varepsilon}=
\mathcal{M}_{t+\varepsilon}^{\mu_n, \alpha_n}|\mathcal{F}_t\right]=\lambda_n^N$, we have  $$\hat{\mu}_N= \textbf{E}_t[\sum_{n \leq N} \textbf{1}_{\zeta_N^{\varepsilon}=
\mathcal{M}_{t+\varepsilon}^{\mu_n, \alpha_n}} \mu_n]$$ a.s.
We split the difference between $\mu_{N}^{\varepsilon}$ and $\mu_N$ in two terms as follows:
\begin{align}\label{l1}
|\mu_N^\varepsilon-\hat{\mu}_N|&=|\mathcal{E}_{t,t+\varepsilon}^{f}[\sum_{n \leq N} \textbf{1}_{\zeta_N^{\varepsilon}=
\mathcal{M}_{t+\varepsilon}^{\mu_n, \alpha_n}} \mathcal{M}_{t+\varepsilon}^{\mu_n, \alpha_n}]-\textbf{E}_{t}[\sum_{n \leq N} \textbf{1}_{\zeta_N^{\varepsilon}=
\mathcal{M}_{t+\varepsilon}^{\mu_n, \alpha_n}}\mu_n]|^2 \nonumber\\&\leq
 2|\mathcal{E}_{t,t+\varepsilon}^{f}[\sum_{n \leq N} \textbf{1}_{\zeta_N^{\varepsilon}=
\mathcal{M}_{t+\varepsilon}^{\mu_n, \alpha_n}}\mathcal{M}_{t+\varepsilon}^{\mu_n, \alpha_n}]-\mathcal{E}_{t,t+\varepsilon}^{f}[\sum_{n \leq N} \textbf{1}_{\zeta_N^{\varepsilon}=
\mathcal{M}_{t+\varepsilon}^{\mu_n, \alpha_n}}\mu_n]|^2\\&+2|\mathcal{E}_{t,t+\varepsilon}^{f}[\sum_{n \leq N}\textbf{1}_{\zeta_N^{\varepsilon}=
\mathcal{M}_{t+\varepsilon}^{\mu_n, \alpha_n}}\mu_n]-\textbf{E}_{t}[\sum_{n \leq N} \textbf{1}_{\zeta_N^{\varepsilon}=
\mathcal{M}_{t+\varepsilon}^{\mu_n, \alpha_n}}\mu_n]|^2. \nonumber\end{align}
From the a priori estimations on BSDEs, we obtain:
\begin{align}\label{leq2}
&|\mathcal{E}_{t,t+\varepsilon}^{f}[\sum_{n \leq N} \textbf{1}_{\zeta_N^{\varepsilon}=
\mathcal{M}_{t+\varepsilon}^{\mu_n, \alpha_n}} \mathcal{M}_{t+\varepsilon}^{\mu_n, \alpha_n}]-\mathcal{E}_{t,t+\varepsilon}^{f}[\sum_{n \leq N} \textbf{1}_{\zeta_N^{\varepsilon}=
\mathcal{M}_{t+\varepsilon}^{\mu_n, \alpha_n}}\mu_n]|^2 \nonumber\\&\qquad \leq \textbf{E}_t[\sum_{n \leq N} \textbf{1}_{\zeta_N^{\varepsilon}=
\mathcal{M}_{t+\varepsilon}^{\mu_n, \alpha_n}}(\mathcal{M}_{t+\varepsilon}^{\mu_n, \alpha_n}-\mu_n)^2] \leq
\sum_{n \leq N} \textbf{E}_t[ (\mathcal{M}_{t+\varepsilon}^{\mu_n, \alpha_n}-\mu_n)^2].
\end{align}
%Since $\mathcal{E}^f_{\cdot,T}[0]$ and $\mathcal{E}^f_{\cdot,T}[1]$ are RCLL processes, we obtain: $\textbf{1}_{B_n^\varepsilon} \rightarrow \textbf{1}_{\{ \mu_n \in ]\mathcal{E}_t^f[0], \mathcal{E}_t^f[1]  [ \}}$ and $\textbf{1}_{C_n^\varepsilon} \rightarrow 0$ a.s. when $\varepsilon \rightarrow 0.$

Since for all $n \leq N$ the processes $\mathcal{M}_\cdot^{\mu_n, \alpha_n}$ are continuous and belong to $\mathbf{S}_2$, we can apply  Lebesgue's  theorem and obtain that the right member of \eqref{leq2} tends to $0$ when $\varepsilon \rightarrow 0.$
Moreover, by applying Proposition \ref{estimation} with $\xi^\varepsilon=\sum_{n \geq 1} \textbf{1}_{A_n^\varepsilon}\mu_n$, we derive that it exists $\eta_\varepsilon$, with $\eta_\varepsilon \rightarrow 0$ a.s. when $\varepsilon \rightarrow 0$ such that:
\begin{align}\label{l3}
|\mathcal{E}_{t,t+\varepsilon}^{f}[\sum_{n \leq N} \textbf{1}_{\zeta_N^{\varepsilon}=
\mathcal{M}_{t+\varepsilon}^{\mu_n, \alpha_n}}\mu_n]-\textbf{E}_{t}[\sum_{n \leq N} \textbf{1}_{\zeta_N^{\varepsilon}=
\mathcal{M}_{t+\varepsilon}^{\mu_n, \alpha_n}}\mu_n]|^2 \leq \eta_\varepsilon.
\end{align}
From \eqref{l1}, \eqref{leq2} and \eqref{l3}, by letting $\varepsilon$ tend to $0$, we get that $\lim_{\varepsilon \rightarrow 0} \mu_N^{\varepsilon}= \hat{\mu}_N$ a.s. This implies that $\Delta(\hat{\mu}_N-\mu_N^\varepsilon) =C|\hat{\mu}_N-\mu_N^\varepsilon|^{\frac{1}{2}}\rightarrow 0.$ Since $\Phi$  satisfies Assumption \ref{assumpt1},  we get by  Theorem \ref{cont}, the desired result.\\

\textbf{Step b.} We prove that for each $n \leq N$, $\lim_{\varepsilon \rightarrow 0}\textbf{E}_t[|\mathcal{Y}_{t+\varepsilon}(\mathcal{M}_{t+\varepsilon}^{\mu_n, \alpha_n})-\mathcal{Y}_t(\mu_n)|]=0$ a.s.\\
%Set $\mu_0^n:= \mathcal{E}^f_{0,t}[\mu_n]$ and let $\beta_n$ be the process associated to the BSDE representation of $\mu_n$. We now consider a control $\Tilde{\alpha}_n$ which coincides with $\beta_n$ up to time $t$ and then (after time $t$) with $\alpha_n$. 

As Assumption $\ref{assumpt1}$, inequality \ref{B} and Remark $\ref{RRR}$ hold, we can apply Theorem \ref{cadlag} and Lebesgue's  Theorem, which lead to the desired result.\\

\textbf{Step c.}

Recall that by $\eqref{defmu}$ we have $\mu_{N}^{\varepsilon}= \mathcal{E}_{t,t+\varepsilon}^f \left[\zeta_N^\varepsilon \right].$ Lemma \ref{DPP} gives:
\begin{align*}
\mathcal{Y}_t({\mu}_N^\varepsilon) \leq \mathcal{E}_{t,t+\varepsilon}^g[\mathcal{Y}_{t+\varepsilon}(\zeta_N^\varepsilon)] =  \mathcal{E}_{t,t+\varepsilon}^g \left( \mathcal{Y}_{t+\varepsilon}(\sum_{n \leq N} \textbf{1}_{\zeta_N^{\varepsilon}=
\mathcal{M}_{t+\varepsilon}^{\mu_n, \alpha_n}}\mathcal{M}_{t+\varepsilon}^{\mu_n, \alpha_n}) \right).
\end{align*}

By Lemma \ref{R}, we obtain:
\begin{align}\label{em2}
\mathcal{Y}_t({\mu}_N^\varepsilon) \leq \mathcal{E}_{t,t+\varepsilon}^g \left(\sum_{n \leq N} \textbf{1}_{\zeta_N^{\varepsilon}=
\mathcal{M}_{t+\varepsilon}^{\mu_n, \alpha_n}} \mathcal{Y}_{t+\varepsilon}(\mathcal{M}_{t+\varepsilon}^{\mu_n, \alpha_n}) \right).
\end{align}
We now apply Proposition \ref{estimation} with $\xi^\varepsilon:=\sum_{n \geq 1} \textbf{1}_{\zeta_N^{\varepsilon}=
\mathcal{M}_{t+\varepsilon}^{\mu_n, \alpha_n}}\mathcal{Y}_{t+\varepsilon}(\mathcal{M}_{t+\varepsilon}^{\mu_n, \alpha_n})$ and  derive that it exists $\eta'_\varepsilon$, with $\eta'_\varepsilon \rightarrow 0$ a.s. when $\varepsilon \rightarrow 0$ such that:
$$\mathcal{Y}_t({\mu}_N^\varepsilon) \leq \mathcal{E}_{t,t+\varepsilon}^g \left(\sum_{n \leq N} \textbf{1}_{\zeta_N^{\varepsilon}=
\mathcal{M}_{t+\varepsilon}^{\mu_n, \alpha_n}} \mathcal{Y}_{t+\varepsilon}(\mathcal{M}_{t+\varepsilon}^{\mu_n, \alpha_n}) \right)\leq   \textbf{E}_{t}\left[\sum_{n \leq N} \textbf{1}_{\zeta_N^{\varepsilon}=
\mathcal{M}_{t+\varepsilon}^{\mu_n, \alpha_n}} \mathcal{Y}_{t+\varepsilon}(\mathcal{M}_{t+\varepsilon}^{\mu_n, \alpha_n}) \right]+\eta'_\varepsilon.$$
We finally get:
$$\mathcal{Y}_t({\mu}_N^\varepsilon)\leq \textbf{E}_t \left[\sum_{n \leq N}\textbf{1}_{\zeta_N^{\varepsilon}=
\mathcal{M}_{t+\varepsilon}^{\mu_n, \alpha_n}} \mathcal{Y}_t(\mu_n)\right]+\sum_{n \leq N} \textbf{E}_t \left[|\mathcal{Y}_{t+\varepsilon}(\mathcal{M}_{t+\varepsilon}^{\mu_n, \alpha_n})-\mathcal{Y}_t(\mu_n)|\right]+\eta'_\varepsilon. $$
Letting $\varepsilon$ tend to $0$ in the above inequality, we obtain, by Step b:
\begin{align}\label{last}
\lim \sup_{\varepsilon \rightarrow 0} \mathcal{Y}_t({\mu}_N^\varepsilon) \leq  \sum_{n \leq N} \lambda_n^N \mathcal{Y}_t(\mu_n) \leq \sum_{n \leq N} \lambda_n^N Y_n= \hat{Y}_n,
\end{align}
where the last inequality follows by definition of the sequence $(\mu_n, Y_n)_{n}$ and $\eqref{conv}.$

%\eqref{em1} and using \eqref{em2}, \textbf{5.b}, we finally obtain:
%\begin{align}
%\mathcal{Y}_t(\hat{\mu}_N) \leq \sum_{n \leq N} \lambda_n^N \mathcal{Y}_t(\mu_n) \leq \sum_{n \leq N} \lambda_n^N Y_n= \hat{Y}_n.
%\end{align}
The desired result \eqref{estim} is obtained by combining \eqref{em1}  with Step a and \eqref{last}.

\end{proof}

\begin{remark}
We recall that in \cite{BER} the authors do not assume the continuity of the map $\mu \rightarrow \mathcal{Y}_{t}(\mu)$ and obtain the convexity of the lower semi-continuous envelope  $\mathcal{Y}_{t^*}(\mu)$, which is defined:
$$\mathcal{Y}_{t^*}(\mu):= \lim_{\varepsilon \rightarrow 0} \essinf\{\mathcal{Y}_t(\mu'): \,\, |\mu'-\mu|\leq \varepsilon\}. $$
In our nonlinear setting, using exactly the same arguments as above, the fact that $\lim \inf_{N \rightarrow \infty} \mathcal{Y}_{t^*}(\mu_N) \geq \mathcal{Y}_{t^*}(\mu)$ when $\mu_N \rightarrow \mu$ a.s. and $\mathcal{Y}_{t^*}(\mu) \leq \mathcal{Y}_{t}(\mu)$, we obtain the convexity of the lower-semicontinuous envelope  $\mathcal{Y}_{t^*}(\mu)$ as in $\cite{BER}$.
\end{remark}

\section{Dual representation in the case of concave constraints}

We now provide a dual representation of the value function defined by \eqref{probl}, which takes the form of a stochastic control problem in Meyer form. The results of this section extend the ones given in \cite{BER}, but involve technical additional proofs, due to  the nonlinearity of the coefficient $f$.

%More precisely, we show that the value function of our problem represents the polar function of the value function of a stochastic control problem in Meyer form.\\
For each $(\omega,t)$, let $\Tilde{f}(\omega,t, \cdot, \cdot, \cdot)$  be
be the concave conjugate  of $f$  with respect to $(x,\pi)$, defined for each $(p,q)$ in $\mathbf{R} \times \mathbf{R}^d$ as follows:

$$\Tilde{f}: (\omega,t,p,q) \in \Omega \times [0,T] \times \mathbf{R} \times \mathbf{R}^d  \rightarrow  \inf_{(x,\pi) \in \mathbf{R} \times \mathbf{R}^d }\left(xp+\pi ^\top q-f(\omega,t,x,\pi)\right). $$

For each $(\omega,t)$, we denote by $\Tilde{g}(\omega,t, \cdot, \cdot, \cdot)$ the convexe conjugate  of $g$  with respect to $(y,z)$, defined for each $(u,v)$ in $\mathbf{R} \times \mathbf{R}^d$ as follows:

$$\Tilde{g}: (\omega,t,u,v) \in \Omega \times [0,T] \times \mathbf{R} \times \mathbf{R}^d  \rightarrow  \sup_{(y,z) \in \mathbf{R} \times \mathbf{R}^d} \left(yu+z^\top v-g(\omega,t,y,z)\right).$$
We also introduce for each $\omega$, the polar function of $\Phi$ with respect to $m$:
$$\Tilde{\Phi}:(\omega,l) \in \Omega \times \mathbf{R} \rightarrow \sup_{m \in [0,1]} \left(ml-\Phi(\omega,m)\right).$$

In the sequel, we denote by  $\mathcal{U}$ the set of predictable processes valued in $D^1$, respectively by $\mathcal{V}$ the set  of predictable processes valued in $D_t^2$, where for each $(t, \omega) \in [0,T] \times \Omega$, $D^1_t(\omega)$ and  $D_t^2(\omega)$ are defined as follows:
\begin{align}
D_t^1(\omega):= \{(p,q): \,\, \Tilde{f}(t,\omega,p,q) > -\infty  \}; \,\,\, D_t^2(\omega):= \{(u,v): \,\, \Tilde{g}(t,\omega,u,v) < +\infty\}.
\end{align}

\begin{remark}\label{boundness}
For each $(t, \omega),$ $ D_t^1(\omega) \subset U$, where $U$ is the closed subset of  $\mathbf{R} \times \mathbf{R}^d $ of elements $\alpha=(\alpha_1, \alpha_2)$ such that $|\alpha_1| \leq C_g$ and $|\alpha_2^{i}|\leq C_g$, $\forall i=\overline{1,d}.$
The same remark holds for the elements belonging to $D_t^2(\omega)$, with $C_f$ instead of $C_g$.
\end{remark}
To each $l>0$, $\gamma=(\kappa, \vartheta) \in \mathcal{V}$  (resp. $\lambda=(\mu, \nu) \in \mathcal{U}$ ), we associate the processes $\mathcal{A}^{l,\gamma}$ (resp.   $\mathcal{L}^{\lambda}$ ) defined by
$$\mathcal{A}_t^{l, \gamma}=l+\int_0^t \mathcal{A}_s^{l, \gamma} \kappa_s ds+\int_0^t \mathcal{A}_s^{l,\gamma} \vartheta_s dW_s, \text{  } t \in [0,T];$$
$$\mathcal{L}_t^{\lambda}=1+\int_0^t \mathcal{L}_s^{\lambda} \mu_s ds+\int_0^t \mathcal{L}_{s}^{\lambda} \nu_s dW_s, \text{  } t \in [0,T].$$

The dual formulation of $\mathcal{Y}_0$ is expressed in terms of
$$\mathcal{X}_0(l):=\inf_{(\lambda, \gamma) \in \mathcal{U} \times \mathcal{V}} X_0^{l,\lambda, \gamma} $$
where
%%$$X_0^{l, \lambda, \Bar{\lambda}}:=E[\int_0^T L_s{l,\Bar{\lambda}}$
%$$X_0^{l, \lambda, \Bar{\lambda}}:=E[\int_0^T L_s^{1,\lambda}\Tilde{g(s,\lambda_s)}-\int_0^T
$$X_0^{l, \lambda, \gamma}:=E \left[\int_0^T \mathcal{L}_s^{\lambda}\Tilde{g}(s,\lambda_s)ds-\int_0^T\mathcal{A}_s^{l, \gamma} \Tilde{f}(s,\gamma_s)ds+\mathcal{L}_T^{\lambda}\Tilde{\Phi}(\dfrac{\mathcal{A}_T^{l, \gamma}}{\mathcal{L}_T^{\lambda}}) \right].$$

\begin{proposition}\label{f11}
$\mathcal{Y}_0(m) \geq \sup_{l>0}(lm-\mathcal{X}_0(l))$, for all $m \in \left[\mathcal{E}_{0,T}^f[0],\mathcal{E}_{0,T}^f[1]\right].$
\begin{proof}
Fix $\alpha \in \textbf{A}_{0,m}$, $\lambda=(\nu,\mu) \in \mathcal{U}$, $l>0$ and $\gamma=(\kappa,\vartheta) \in \mathcal{V}$. The definition of $\Tilde{\Phi}$, together with Ito formula imply:
\begin{equation}
\textbf{E}[Y_T^{m,\alpha} \mathcal{L}_T^{\lambda}] \leq Y_0^{m, \alpha}+\textbf{E}[\int_0^T \mathcal{L}_s^{\lambda}\Tilde{g}(s,\lambda_s)ds]
\end{equation}
and
\begin{align}
\textbf{E}[Y_T^{m,\alpha}\mathcal{L}_T^{\lambda}]&=\textbf{E}[\Phi(\mathcal{M}_T^{m,\alpha})\mathcal{L}_T^{\lambda}] \geq \textbf{E}[\mathcal{A}_T^{l,\gamma} \mathcal{M}_T^{m,\alpha}-\mathcal{L}_T^{\lambda}\Tilde{\Phi}(\dfrac{\mathcal{A}_T^{l,\gamma}} {\mathcal{L}_T^{\lambda}})]\nonumber\\
& \geq \textbf{E}[lm+\int_0^T\mathcal{A}_s^{l,\gamma} \Tilde{f}(s,\gamma_s)ds-\mathcal{L}_T^{\lambda}\Tilde{\Phi}(\dfrac{\mathcal{A}_T^{l,\gamma}} {\mathcal{L}_T^{\lambda}})].
\end{align}
Note that since $Y^{m,\alpha}, \mathcal{L}^\lambda, \mathcal{M}^{m,\alpha}, \mathcal{A}^{l, \gamma} \in \mathbf{S}_2$, $Z^{m,\alpha}, \alpha \in \textbf{H}_2$ and Remark \ref{boundness} holds, by applying Burkholder-Davis-Gundy inequality, we obtain that the local martingales $\int_0^\cdot Y_s^{m,\alpha}\mathcal{L}_s^\lambda \nu_s^\top dW_s,$ $\int_0^\cdot \mathcal{L}_s^\lambda Z_s^{m,\alpha, \top }dW_s$, $\int_0^\cdot \mathcal{M}_s^{m,\alpha}\mathcal{A}_s^{l, \gamma} \vartheta_s^\top dW_s,\int_0^\cdot \mathcal{A}_s^{l,\gamma} \alpha_s^\top dW_s$ are in fact martingales. Hence we can cancel their expectations.
From the two above inequalities, we derive that:
$$Y_0^{m, \alpha} \geq lm-\textbf{E}\left[\int_0^T \mathcal{L}_s^{\lambda}\Tilde{g}(s,\lambda_s)ds-\int_0^T\mathcal{A}_s^{l, \gamma} \Tilde{f}(s,\gamma_s)ds+\mathcal{L}_T^{\lambda}\Tilde{\Phi}(\dfrac{\mathcal{A}_T^{l, \gamma}}{\mathcal{L}_T^{\lambda}})\right].$$
By arbitrariness of $(\lambda, \gamma) \in \mathcal{U} \times \mathcal{V}$, we get:
$$Y_0^{m, \alpha} \geq lm-\mathcal{X}_0(l).$$
We then take the essential infimum on $\alpha \in \textbf{A}_0^m$ and  the supremum on $l>0$. The result follows.
\end{proof}
\end{proposition}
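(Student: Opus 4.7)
The plan is to fix arbitrary $\alpha \in \textbf{A}_{0,m}$, dual parameters $l>0$, $\lambda=(\mu,\nu) \in \mathcal{U}$ and $\gamma=(\kappa,\vartheta) \in \mathcal{V}$, and to derive a chain of three inequalities linking $Y_0^{m,\alpha}$ to $lm - X_0^{l,\lambda,\gamma}$. Each link is a Fenchel--Young inequality, one each coming from the conjugates $\widetilde g$, $\widetilde f$ and $\widetilde\Phi$, and the three are stitched together by It\^o's product rule applied to $Y^{m,\alpha}\mathcal{L}^\lambda$ and $\mathcal{M}^{m,\alpha}\mathcal{A}^{l,\gamma}$. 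Taking $\essinf_{\alpha \in \textbf{A}_{0,m}}$ at the end (which yields $\mathcal{Y}_0(m)$ by \eqref{erq} and the definition of $\textbf{A}_{0,m}$), followed by $\inf_{\lambda,\gamma}$ and $\sup_{l>0}$, will deliver the claim.

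First I would apply It\^o to $Y^{m,\alpha}\mathcal{L}^\lambda$, where $(Y^{m,\alpha},Z^{m,\alpha})$ is the solution of the BSDE with driver $g$ and terminal condition $\Phi(\mathcal{M}^{m,\alpha}_T)$. The drift is $\mathcal{L}^\lambda_s\bigl[Y^{m,\alpha}_s\mu_s + Z^{m,\alpha}_s\cdot\nu_s - g(s,Y^{m,\alpha}_s,Z^{m,\alpha}_s)\bigr]$, bounded above by $\mathcal{L}^\lambda_s\widetilde g(s,\lambda_s)$ by definition of the convex conjugate, hence
\[
\textbf{E}[Y_T^{m,\alpha}\mathcal{L}_T^\lambda] \le Y_0^{m,\alpha} + \textbf{E}\Bigl[\int_0^T \mathcal{L}_s^\lambda\,\widetilde g(s,\lambda_s)\,ds\Bigr].
\]
Symmetrically, applying It\^o to $\mathcal{M}^{m,\alpha}\mathcal{A}^{l,\gamma}$ using the SDE for $\mathcal{M}^{m,\alpha}$ and the concave-conjugate inequality $f(x,\pi) \le xp + \pi^\top q - \widetilde f(p,q)$, valid on $D^1$, gives
\[
\textbf{E}[\mathcal{M}_T^{m,\alpha}\mathcal{A}_T^{l,\gamma}] \ge lm + \textbf{E}\Bigl[\int_0^T \mathcal{A}_s^{l,\gamma}\,\widetilde f(s,\gamma_s)\,ds\Bigr].
\]

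To link the two left-hand sides I would invoke the polar inequality $\Phi(m') \ge \ell m' - \widetilde\Phi(\ell)$ at the random point $(m',\ell)=(\mathcal{M}_T^{m,\alpha},\mathcal{A}_T^{l,\gamma}/\mathcal{L}_T^\lambda)$ and multiply through by $\mathcal{L}_T^\lambda > 0$:
\[
\Phi(\mathcal{M}_T^{m,\alpha})\,\mathcal{L}_T^\lambda \ge \mathcal{A}_T^{l,\gamma}\mathcal{M}_T^{m,\alpha} - \mathcal{L}_T^\lambda\,\widetilde\Phi\bigl(\mathcal{A}_T^{l,\gamma}/\mathcal{L}_T^\lambda\bigr).
\]
Since $Y_T^{m,\alpha}=\Phi(\mathcal{M}_T^{m,\alpha})$, combining the three displays after taking expectations yields $Y_0^{m,\alpha} \ge lm - X_0^{l,\lambda,\gamma}$, and the three successive optimisations described above then close the argument.

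The main obstacle I anticipate is entirely technical: the stochastic integrals produced by the two It\^o expansions are a priori only local martingales, so their expectations may be set to zero only after a genuine integrability check. Here I would combine the $\mathbf{S}_2$-bounds on $Y^{m,\alpha}$ and $\mathcal{M}^{m,\alpha}$, the $\textbf{H}_2$-bounds on $Z^{m,\alpha}$ and $\alpha$, the uniform boundedness of the dual controls $(\mu,\nu,\kappa,\vartheta)$ inherited from Remark \ref{boundness}, and classical $L^p$-estimates for the exponential-type SDEs governing $\mathcal{L}^\lambda$ and $\mathcal{A}^{l,\gamma}$, together with Burkholder--Davis--Gundy, to upgrade each integrand to a true martingale integral.
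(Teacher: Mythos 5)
Your proposal follows essentially the same route as the paper's proof: the same two It\^o product expansions for $Y^{m,\alpha}\mathcal{L}^{\lambda}$ and $\mathcal{M}^{m,\alpha}\mathcal{A}^{l,\gamma}$, the same three Fenchel--Young inequalities coming from $\Tilde g$, $\Tilde f$ and $\Tilde\Phi$, and the same justification (boundedness of the dual controls from Remark \ref{boundness} plus $\mathbf{S}_2$/$\mathbf{H}_2$ estimates and Burkholder--Davis--Gundy) for replacing the local martingale terms by true martingales before combining and optimising. The argument is correct and matches the paper's in both structure and detail.
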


We now show that equality holds under some additional assumptions.
\begin{assumption}\label{HYP}
We make the following assumptions:
\begin{itemize}
\item[$(a)$] For each $(t,\omega) \in \Omega \times [0,T]$, the maps $\Tilde{\Phi}(\omega,\cdot)$, $\Tilde{f}(t,\omega,\cdot)$ and $\Tilde{g}(t,\omega, \cdot)$ are of class $C^1_b$.  Also  $D_t^1(\omega)$and $D_t^2(\omega)$ are closed.

\item[$(b)$] $|\nabla \Tilde{\Phi}(\omega,\cdot)|+|\!|\nabla \Tilde f(\omega,t,\cdot)|\!|_{\mathbf{R} \times \mathbf{R}^d}+|\!|\nabla \Tilde g(\omega,t,\cdot)|\!|_{\mathbf{R} \times \mathbf{R}^d} \leq C_{ \Tilde{\Phi},\Tilde f,\Tilde g},$ for some $C_{ \Tilde{\Phi},\Tilde f,\Tilde g} \in \textbf{L}_2(\mathbf{R});$

\item[$(c)$] $\Phi(\omega,m)= \sup_{l >0} \left(lm-\Tilde{\Phi}(\omega,l)\right)$,  for all $m \in [0,1]$;

\item[$(d)$] $f(\omega,t,x,\pi)= \min_{(p,q) \in D^2_t(\omega)} \left(px+\pi ^\top q-\Tilde{f}(\omega,t,p,q)\right)$, for all $(x,\pi) \in \mathbf{R} \times \mathbf{R}^d$;

\item[$(e)$] $g(\omega,t,y,z)= \max_{(u,v) \in D^1_t(\omega)} \left(yu+z^\top v-\Tilde{g}(\omega,t,u,v)\right)$, for all $(y,z) \in \mathbf{R} \times \mathbf{R}^d$.

\end{itemize}

\end{assumption}

\begin{proposition}
Assume that there exists $\hat{l}>0$, $\hat{\lambda} \in \mathcal{U}$ and $\hat{\gamma} \in \mathcal{V}$ such that
\begin{align}
\sup(lm-\mathcal{X}_0(l))=\hat{l}m-\mathcal{X}_0(\hat{l})=\hat{l}m-X_0^{\hat{l},\hat{\lambda},\hat{\gamma}}.
\end{align}
Then there exists $\hat{\alpha} \in \textbf{H}_2$ such that
\begin{align}
\mathcal{Y}_0(m)=Y_0^{m, \hat{\alpha}}=\hat{l}m-\mathcal{X}_0({\hat{l}}).
\end{align}
Also it satisfies
\begin{equation}\label{cond1}
\begin{cases}
f(\cdot, \mathcal{M}^{m, \hat{\alpha}}, \hat{\alpha})=\hat{\kappa}\mathcal{M}^{m, \hat{\alpha}}+\hat{\vartheta}^\top \hat{\alpha}-\Tilde{f}(\cdot,\hat{\gamma}); \ \; \text{  }   \mathcal{M}_T^{m, \hat{\alpha}}=\nabla \Tilde{\Phi}\left(\frac{\mathcal{A}_T^{ \hat{l},\hat{\gamma}}}{\mathcal{L}_T^{\hat{\lambda}}}\right);\\

g(\cdot, Y^{m, \hat{\alpha}},Z^{m, \hat{\alpha}})=\hat{\mu}Y^{m, \hat{\alpha}}+\hat{\nu}^\top Z^{m, \hat{\alpha}}-\Tilde{g}(\cdot, \hat{\lambda}); \ \; \text{ }

\Phi( \mathcal{M}_T^{m, \hat{\alpha}})= \frac{\mathcal{M}_T^{m, \hat{\alpha}} \mathcal{A}_T^{\hat{l},\hat{\gamma}}}{\mathcal{L}_T^{\hat{\lambda}}}-\Tilde{\Phi}(\frac{ \mathcal{A}_T^{\hat{l},\hat{\gamma}}}{\mathcal{L}_T^{\hat{\lambda}}}).
\end{cases}
\end{equation}
\end{proposition}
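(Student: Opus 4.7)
The strategy is to exploit the fact that the chain of inequalities in the proof of Proposition \ref{f11} rests on three Fenchel-type bounds,
\begin{align*}
\Phi(m) &\geq lm-\Tilde{\Phi}(l),\\
f(t,x,\pi) &\leq px+\pi^\top q-\Tilde{f}(t,p,q),\\
g(t,y,z) &\geq yu+z^\top v-\Tilde{g}(t,u,v),
\end{align*}
and that, by Assumption \ref{HYP}(a)--(e) together with the $C^1_b$ regularity of $\Tilde{\Phi},\Tilde{f},\Tilde{g}$, each bound is tight precisely along its subdifferential: equality in the three lines means $m=\nabla\Tilde{\Phi}(l)$, $(x,\pi)=\nabla_{(p,q)}\Tilde{f}(t,p,q)$ and $(y,z)=\nabla_{(u,v)}\Tilde{g}(t,u,v)$. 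I will build $\hat\alpha$ directly from the dual optimizers $(\hat l,\hat\lambda,\hat\gamma)$ so that all three bounds become equalities, whence the very Itô computation of Proposition \ref{f11} will collapse to $Y_0^{m,\hat\alpha}=\hat l\,m-\mathcal{X}_0(\hat l)$.

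For the construction, set $\hat\xi_T := \nabla\Tilde{\Phi}\bigl(\mathcal{A}_T^{\hat l,\hat\gamma}/\mathcal{L}_T^{\hat\lambda}\bigr)$, which by Fenchel duality (Assumption \ref{HYP}(c)) is valued in $[0,1]$ and lies in $\textbf{L}_2$ thanks to the boundedness of $\nabla\Tilde{\Phi}$ in \ref{HYP}(b). Let $(\hat M,\hat\alpha)$ be the solution of the BSDE with driver $f$ and terminal condition $\hat\xi_T$, so that $\mathcal{M}_T^{\hat M_0,\hat\alpha}=\hat\xi_T$ by construction, which is the terminal Fenchel identity in \eqref{cond1}. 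The key point is to verify $\hat M_0=m$, so that $\hat\alpha\in\textbf{A}_{0,m}$ and $\mathcal{M}^{m,\hat\alpha}=\hat M$. This is the envelope-type consequence of $\hat l$ being an interior maximizer of $l\mapsto lm-\mathcal{X}_0(l)$: combining $\mathcal{X}_0(l)\le X_0^{l,\hat\lambda,\hat\gamma}$ with equality at $l=\hat l$ and using $\mathcal{A}^{l,\hat\gamma}=l\,\mathcal{A}^{1,\hat\gamma}$ to differentiate $X_0^{l,\hat\lambda,\hat\gamma}$ in $l$, one rewrites $\partial_l X_0^{l,\hat\lambda,\hat\gamma}|_{l=\hat l}$ as the expectation produced by the very Itô expansion of $\hat M\,\mathcal{A}^{l,\hat\gamma}$ appearing in the middle inequality of the proof of Proposition \ref{f11}; that expectation telescopes to $\hat M_0$, and the stationarity condition $m=\partial_l X_0^{l,\hat\lambda,\hat\gamma}|_{l=\hat l}$ delivers $\hat M_0=m$.

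The remaining two Fenchel equalities come from first-order optimality of $\hat\gamma$ and $\hat\lambda$ in $\mathcal{X}_0(\hat l)$. Perturbing $\hat\gamma$ inside the closed, set-valued domain $\mathcal{V}$ (cf.\ \ref{HYP}(a)) and exchanging expectation and derivative via the uniform bounds of \ref{HYP}(b), stationarity forces $(\mathcal{M}^{m,\hat\alpha}_t,\hat\alpha_t)=\nabla_{(p,q)}\Tilde{f}(t,\hat\gamma_t)$ $dt\otimes dP$-a.e., which by Fenchel equality is precisely the first identity of \eqref{cond1}; the analogous perturbation of $\hat\lambda$ yields $(Y^{m,\hat\alpha}_t,Z^{m,\hat\alpha}_t)=\nabla_{(u,v)}\Tilde{g}(t,\hat\lambda_t)$, i.e.\ the second identity. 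Substituting these pointwise equalities and the terminal identity into the two Itô expansions of the proof of Proposition \ref{f11}, every inequality there becomes an equality and one obtains $Y_0^{m,\hat\alpha}=\hat l\,m-X_0^{\hat l,\hat\lambda,\hat\gamma}=\hat l\,m-\mathcal{X}_0(\hat l)$; combined with Proposition \ref{f11} and the dual attainment hypothesis, this produces $\mathcal{Y}_0(m)\le Y_0^{m,\hat\alpha}=\hat l\,m-\mathcal{X}_0(\hat l)\le\mathcal{Y}_0(m)$, so $\hat\alpha$ is primal-optimal. The main obstacle will be in the second paragraph above: since $X_0^{l,\lambda,\gamma}$ depends on path-dependent solutions of linear SDEs, both the envelope identity $\partial_l X_0=\hat M_0$ and the pointwise stationarity for $\hat\gamma,\hat\lambda$ require a careful directional-derivative argument on a constrained domain, where the uniform bounds of Remark \ref{boundness} and \ref{HYP}(b), together with an admissible family of perturbations that stays inside $D^1$ and $D^2$, are what legitimizes the exchange of limit and expectation.
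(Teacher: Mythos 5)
Your overall architecture coincides with the paper's: define $\hat\alpha$ through a BSDE with terminal condition $\nabla\Tilde{\Phi}\bigl(\mathcal{A}_T^{\hat l,\hat\gamma}/\mathcal{L}_T^{\hat\lambda}\bigr)$, use stationarity in $l$ to recover the initial value $m$, use stationarity in $\hat\gamma$ and $\hat\lambda$ to force the Fenchel bounds into equalities, and close the argument with Proposition \ref{f11}. However, there is a genuine gap in the central step, and it is not merely the limit-exchange issue you flag at the end. You work throughout with the solution $(\hat M,\hat\alpha)$ of the BSDE driven by $f$, and claim that the It\^o expansion of $\mathcal{A}^{1,\hat\gamma}\hat M$ ``telescopes to $\hat M_0$'' against the stationarity identity $m=\partial_l X_0^{l,\hat\lambda,\hat\gamma}|_{l=\hat l}$. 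With the $f$-driver this expansion produces the integrand $\mathcal{A}_s^{1,\hat\gamma}\bigl(\hat M_s\hat\kappa_s+\hat\alpha_s^\top\hat\vartheta_s-f(s,\hat M_s,\hat\alpha_s)\bigr)$, which by the Fenchel \emph{inequality} only dominates $\mathcal{A}_s^{1,\hat\gamma}\Tilde{f}(s,\hat\gamma_s)$; you therefore obtain $\hat M_0\leq m$, not equality. Equality requires the pointwise identity $f(\cdot,\hat M,\hat\alpha)=\hat\kappa\hat M+\hat\vartheta^\top\hat\alpha-\Tilde{f}(\cdot,\hat\gamma)$, which in your plan is derived only afterwards from the stationarity in $\hat\gamma$ --- and that derivation itself cannot be carried out on the $f$-BSDE solution, because localizing the G\^ateaux derivative of $X_0^{\hat l,\cdot,\hat\lambda}$ into a pointwise condition requires integrating the terminal term $\hat K_T\mathcal{A}_T^{1,\hat\gamma}\nabla\Tilde{\Phi}(\cdot)$ by parts against a process whose dynamics cancel the $\hat K_s\Tilde{f}(s,\hat\gamma_s)\mathcal{A}_s^{1,\hat\gamma}$ term exactly. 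The argument is therefore circular as sketched.

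The paper breaks this circularity by introducing an auxiliary pair $(\hat M,\hat N)$ solving the \emph{linearized} BSDE with driver $h(s,y,z)=-\Tilde{f}(s,\hat\gamma_s)+y\hat\kappa_s+z^\top\hat\vartheta_s$ and the same terminal condition. For this linear solution the product $\mathcal{A}^{1,\hat\gamma}\hat M$ telescopes exactly, giving both the representation $\hat M_0=m$ and a clean reduction of the $\gamma$-stationarity to the statement that $\hat\gamma_t$ maximizes $(p,q)\mapsto\Tilde{f}(t,p,q)-p\hat M_t-q^\top\hat N_t$ over $D_t^2$ (this step also needs a measurable selection of a maximizing perturbation, which the paper obtains from Assumption \ref{HYP}(a), Remark \ref{boundness} and Aliprantis--Border). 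Only then does Assumption \ref{HYP}(d) yield $h(\cdot,\hat M,\hat N)=f(\cdot,\hat M,\hat N)$, whence uniqueness of BSDE solutions identifies $(\hat M,\hat N)$ with $(\mathcal{E}^f_{\cdot,T}[\nabla\Tilde{\Phi}(\cdot)],\hat\alpha)=(\mathcal{M}^{m,\hat\alpha},\hat\alpha)$ and delivers the first line of \eqref{cond1}; the $g$/$\hat\lambda$ part is handled symmetrically. To repair your proof you must insert this auxiliary linear BSDE (or an equivalent device) before invoking either stationarity condition.
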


\begin{proof}

The proof is divided in two steps.\\

\vspace{3mm}
\noindent{\bf Step 1.} We denote by $\left( \mathcal{E}_{\cdot,T}^{f}\left[\nabla \Tilde{\Phi}(\frac{\mathcal{A}_T^{\hat{l}, \hat{\gamma}}}{\mathcal{L}_T^{\hat{\lambda}}}) \right], \hat{\alpha} \right)$ the solution of the BSDE associated to the terminal condition $\nabla \Tilde{\Phi}\left(\frac{\mathcal{A}_T^{\hat{l}, \hat{\gamma}}}{\mathcal{L}_T^{\hat{\lambda}}}\right)$ and driver $f$. We first need to show that
$\mathcal{E}^{f}_{0,T}\left[\nabla \Tilde{\Phi}\left(\frac{\mathcal{A}_T^{\hat{l}, \hat{\gamma}}}{\mathcal{L}_T^{\hat{\lambda}}}\right)\right]=m.$\\
By the optimality of $\hat{l}$, we get:
\begin{align*}
&\hat{l}m-\textbf{E}\left[\mathcal{L}_T^{\hat{\lambda}}\Tilde{\Phi} \left(\dfrac{\mathcal{A}_T^{\hat{l},\hat{\gamma}}}{\mathcal{L}_T^{\hat{\lambda}}}\right)- \int_0^T\mathcal{A}_s^{\hat{l},\hat{\gamma}} \Tilde{f}(s,\hat{\gamma}_s)ds\right] \\&\qquad \qquad \geq m(\hat{l}+\varepsilon)- \textbf{E}\left[\mathcal{L}_T^{\hat{\lambda}} \Tilde{\Phi}(\dfrac{\mathcal{A}_T^{\hat{l}+\varepsilon,\hat{\gamma}}}{\mathcal{L}_T^{\hat{\lambda}}})- \int_0^T\mathcal{A}_s^{\hat{l}+\varepsilon,\hat{\gamma}} \Tilde{f}(s,\hat{\gamma}_s)ds\right],
\end{align*}
for all $\varepsilon > - \hat{l}.$
Note that  $\mathcal{A}^{l, \gamma}=l \mathcal{A}^{1, \gamma}$ for all $l \in \mathbf{R}$. Since by construction $\Tilde{\Phi}$ is  a.s. convex, we deduce that:

\begin{align*}
m \varepsilon \leq \textbf{E}\left[-\int_0^T \Tilde{f}(s,\hat{\gamma}_s)\mathcal{A}_s^{1,\hat{\gamma}}+\mathcal{A}_T^{1,\hat{\gamma}}\nabla\Tilde{\Phi}\left(\frac{\mathcal{A}_T^{ \hat{l}+\varepsilon,\hat{\gamma}}}{\mathcal{L}_T^{\hat{\lambda}}}\right)\right] \varepsilon.
\end{align*}
We  take in the above inequality $\varepsilon=\frac{1}{n}$ and $\varepsilon=-\frac{1}{n}$. By letting $n$ tend to $\infty$ and using \eqref{HYP} (a) and Lebesgue's Theorem, we finally get:
\begin{align}\label{eq2}
m= \textbf{E}\left[-\int_0^T \Tilde{f}(s,\hat{\gamma}_s)\mathcal{A}_s^{1,\hat{\gamma}}+\mathcal{A}_T^{1,\hat{\gamma}}\nabla\Tilde{\Phi}\left(\frac{\mathcal{A}_T^{ \hat{l},\hat{\gamma}}}{\mathcal{L}_T^{\hat{\lambda}}}\right)\right].
\end{align}

%Moreover, by using Assumption $\textbf{H_1}$, $c$, we have:
%\begin{equation}
%\Phi( \mathcal{M}_T^{m, \hat{\alpha}, \hat{\beta}})= \frac{\mathcal{M}_T^{m, \hat{\alpha}, \hat{\beta}} \mathcal{A}_T^{\hat{l},\hat{\gamma}}}{\mathcal{L}_T^{\hat{\lambda}}}-\Tilde{\Phi}(\frac{ \mathcal{A}_T^{\hat{l},\hat{\gamma}}}{\mathcal{L}_T^{\hat{\lambda}}}).
%\end{equation}

We now introduce the  processes $(\hat{M}, \hat{N}) \in \mathbf{S}_2 \times \textbf{H}_2$, solution of the BSDE associated to the terminal condition $\nabla \Tilde{\Phi}\left(\frac{\mathcal{A}_T^{\hat{l},\hat{\gamma}}}{\mathcal{L}_T^{\hat{\lambda}}}\right)$ and driver
\begin{align}\label{driver}
h(s,\omega,y,z):=-\Tilde{f}(s,\hat{\kappa}_s(\omega),\hat{\vartheta}_s(\omega))+y\hat{\kappa}_s(\omega)+z ^\top \hat{\vartheta}_s(\omega).
\end{align}
Note that $h$ is Lipschitz continuous with respect to $(y,z)$, uniformly in $(s,\omega)$ (see Remark \ref{boundness}). Existence and uniqueness of the solution of the above BSDE is thus guaranteed.

   We apply It\^ o formula to $\mathcal{A}^{1,\hat{\gamma}}\hat{M}$ and obtain:
\begin{align}\label{procdefd}
\mathcal{A}^{1, \hat{\gamma}}_t \hat{M}_t=\mathcal{A}^{1, \hat{\gamma}}_T \nabla \Tilde{\Phi}\left(\frac{\mathcal{A}_T^{ \hat{l},\hat{\gamma}}}{\mathcal{L}_T^{\hat{\lambda}}}\right)- \int_t^T\Tilde{f}(s,\hat{\gamma}_s)\mathcal{A}_s^{1,\hat{\gamma}}ds-\int_t^T\mathcal{A}_s^{1,\hat{\gamma}}\Tilde{N}_sdW_s,
\end{align}
where $\Tilde{N}$ is defined by $\Tilde{N}:=\hat{N}+\hat{M}\hat{\vartheta}.$ Clearly, $\Tilde{N}$ belongs to $\textbf{H}_2$ since $\hat{N} \in \textbf{H}_2$, $\hat{M} \in \mathbf{S}_2$ and $|\!|\hat{\vartheta}|\!|_{\mathbf{R}^d} \leq C$, by Remark \ref{boundness} .

%\begin{align}\label{eq3}
% \hat{N}:=\Bar{N}-\hat{M}\hat{\vartheta};  \,\, \quad \hat{P}:=\frac{\Bar{P}}{1+\hat{\chi}}-\hat{M} \frac{\hat{\chi}}{1+\hat{\chi}}.
%\end{align}
%We take the conditional expectation in $\eqref{procdefd}$ and we get:
%\begin{align}\label{defM}
%\hat{M}_t:=(\mathcal{A}_t^{1, \hat{\gamma}})^{-1} E[-\int_t^T \Tilde{f}(s,\hat{\gamma}_s)\mathcal{A}_s^{1,\hat{\gamma}}+\mathcal{A}_T^{1,\hat{\gamma}}\nabla\Tilde{\Phi}(\frac{\mathcal{A}_T^{ \hat{l},\hat{\gamma}}}{\mathcal{L}_T^{\hat{\gamma}}})| \mathcal{F}_t].
%\end{align}

Let us now fix $\gamma=(\kappa, \vartheta) \in \mathcal{V}$. Since $\mathcal{V}$ is convex, we get that for all $\varepsilon \in [0,1],$ $\gamma^{\varepsilon}:=(1-\varepsilon)(\hat{\kappa}, \hat{\vartheta})+ \varepsilon (\kappa, \vartheta) \in \mathcal{V}.$\\
Using now the optimality condition $X_0^{\hat{l}, \gamma^{\varepsilon}, \hat{\lambda}} \geq X_0^{\hat{l}, \hat{\gamma}, \hat{\lambda}} $, the fact that $\hat{l}>0$, the Lagrange's and Lebesgue's Theorems, one can easily show that $\nabla \Tilde{f}(\cdot, \hat{\gamma})$ satisfies:
\begin{align}\label{optim}
0 &\geq \textbf{E}\left[\int_0^T -\mathcal{A}_s^{1,\hat{\gamma}} \left(\hat{K}_s \Tilde{f}(s, \hat{\gamma}_s)+\nabla_p \Tilde{f}(s, \hat{\gamma}_s) \delta \kappa_s+\nabla_q \Tilde{f}(s, \hat{\gamma}_s) ^\top \delta \vartheta_s\right)ds\right.\nonumber\\&\qquad \qquad \left.+\hat{K}_T\mathcal{A}_T^{1,\hat{\gamma}}\nabla \Tilde{\Phi}\left(\frac{\mathcal{A}_T^{ \hat{l},\hat{\gamma}}}{\mathcal{L}_T^{\hat{\lambda}}}\right)\right],
\end{align}
where $(\delta \kappa, \delta \vartheta):=(\kappa-\hat{\kappa}, \vartheta-\hat{\vartheta})$ and $\hat{K}:=\int_0^\cdot \left(\delta \kappa_s-\delta \vartheta_s \hat{\vartheta}_s\right)ds+\int_0^\cdot \delta \vartheta_sdW_s.$\\
By $\eqref{procdefd}$ we have $\mathcal{A}_T^{1,\hat{\gamma}}\nabla \Tilde{\Phi}(\frac{\mathcal{A}_T^{ \hat{l},\hat{\gamma}}}{\mathcal{L}_T^{\hat{\lambda}}})=\mathcal{A}_T^{1,\hat{\gamma}} \hat{M}_T$. Hence inequality  $\eqref{optim}$ can be re-written as follows:
\begin{align}\label{optim1}
0 &\geq \textbf{E}\left[\int_0^T -\mathcal{A}_s^{1,\hat{\gamma}} \left(\hat{K}_s \Tilde{f}(s, \hat{\gamma}_s)+\nabla_p \Tilde{f}(s, \hat{\gamma}_s) \delta \kappa_s\right.\right.\nonumber\\&\left.\left.\qquad \qquad \qquad +\nabla_q \Tilde{f}(s, \hat{\gamma}_s) ^\top \delta \vartheta_s \right)ds+\hat{K}_T\mathcal{A}_T^{1,\hat{\gamma}}\hat{M}_T\right].
\end{align}

The  definition of $\hat{K}$ together with $\eqref{optim1}$ and  It\^ o formula implies:
\begin{align}\label{eq}
0 \leq \textbf{E} \left[\int_0^T\mathcal{A}_s^{1,\hat{\gamma}} \left((\nabla_p \Tilde{f}(s, \hat{\gamma}_s)-\hat{M}_s)\delta \kappa_s +(\nabla_q \Tilde{f}(s, \hat{\gamma}_s)-\hat{N}_s)^\top \delta \vartheta_s \right)ds \right].
\end{align}
We introduce the  map
$\Theta: [0,T] \times \Omega \times \mathbf{R} \times \mathbf{R}^d \mapsto \mathbf{R}$ defined as follows: $$ \Theta:(\omega,t,u,v) \mapsto (\nabla_p \Tilde{f}(\omega,t, \hat{\gamma}_t(\omega))-\hat{M}_t(\omega))(u-\hat{\kappa}_t(\omega)) +(\nabla_q \Tilde{f}(\omega,t, \hat{\gamma}_t(\omega))-\hat{N}_t(\omega)) ^\top (v-\hat{\vartheta}_t(\omega)).$$
By Remark \ref{boundness}, Assumption \ref{HYP} (a) and Theorem 18.19, p.605 in \cite{AB}, there exists a predictable $\Bar{\gamma}$ belonging to $\mathcal{V}$ such that $\Bar{\gamma}=$argmin$ \{\Theta(\cdot, u,v), \,\, (u,v) \in D^2\}$. For each $(t,\omega) \in [0,T] \times \Omega$, define the map $F$ as follows:
\begin{align}
(p,q) \in D_t^2(\omega) \mapsto F(\omega,t,p,q):=\Tilde{f}(\omega,t,p,q)-p \hat{M}_t(\omega)-q^\top \hat{N}_t(\omega).
\end{align}
Note that we have:
$$\Theta(t, \omega, u,v)=\nabla_p F(t,\omega,\hat{\gamma}_t(\omega))(u-\hat{\kappa}_t(\omega))+\nabla_q F(t,\omega,\hat{\gamma}_t(\omega))^\top (v-\hat{\vartheta}_t(\omega)).$$
Since $\eqref{eq}$ holds for all $\gamma \in \mathcal{V}$,  we can take $\Bar{\gamma} \textbf{1}_{\Theta(\cdot, \Bar{\gamma})>0}+\hat{\gamma}\textbf{1}_{\Theta(\cdot, \Bar{\gamma}) \leq 0}$. Hence we derive that, for $dt \otimes dP$- a.e. $(\omega,t) \in \Omega \times [0,T]$, we have:
\begin{align*}
\Theta(t, \omega, u,v) \leq 0, \,\, \forall \,(u,v) \in D_t^2(\omega).
\end{align*}

By a result of convex analysis, this implies that $\hat{\gamma}_t(\omega)$ maximizes $F(\omega,t, \cdot)$ for $dt \otimes dP$- a.e. $(\omega,t) \in \Omega \times [0,T]$ and thus by Assumption \ref{HYP} (d) we get:
\begin{align}\label{rel1}
\Tilde{f}(\cdot, \hat{\gamma})=\hat{\kappa} \hat{M}+\hat{\vartheta}^\top \hat{N}-f(\cdot,\hat{M}, \hat{N}).
\end{align}
The above relation together with the definition of $h$ ( see \eqref{driver}) leads to:
$$h(\cdot,\hat{M}, \hat{N})= f(\cdot,\hat{M}, \hat{N}).$$
Recall that $(\hat{M}, \hat{N})$ represents the solution of the BSDE of terminal condition $\nabla  \Tilde{\Phi}(\frac{\mathcal{A}_T^{ \hat{l},\hat{\gamma}}}{\mathcal{L}_T^{\hat{\lambda}}})$ and driver $h$. Hence by applying the comparison theorem for BSDEs, we get
\begin{align}\label{r1}
(\hat{M}, \hat{N})=(\mathcal{E}_{\cdot,T}^f[\nabla  \Tilde{\Phi}(\frac{\mathcal{A}_T^{ \hat{l},\hat{\gamma}}}{\mathcal{L}_T^{\hat{\lambda}}})], \Hat{\alpha}).
\end{align}
Now, we take the conditional expectation in $\eqref{procdefd}$ and we get:
\begin{align}\label{defM}
\hat{M}_t:=(\mathcal{A}_t^{1, \hat{\gamma}})^{-1} \textbf{E}[-\int_t^T \Tilde{f}(s,\hat{\gamma}_s)\mathcal{A}_s^{1,\hat{\gamma}}+\mathcal{A}_T^{1,\hat{\gamma}}\nabla\Tilde{\Phi}(\frac{\mathcal{A}_T^{ \hat{l},\hat{\gamma}}}{\mathcal{L}_T^{\hat{\gamma}}})| \mathcal{F}_t].
\end{align}
We have cancelled the expectation of $\int_\cdot^T\mathcal{A}_s^{1,\hat{\gamma}}\Tilde{N}_sdW_s$, since by martingale inequalities, it is a martingale.

From $\eqref{eq2}$, $\eqref{r1}$ and $\eqref{defM}$, we derive that  $\mathcal{E}_{0,T}^f \left[\nabla  \Tilde{\Phi}(\frac{\mathcal{A}_T^{ \hat{l},\hat{\gamma}}}{\mathcal{L}_T^{\hat{\lambda}}}) \right]=m$.   Moreover, as $\eqref{rel1}$ holds and $\mathcal{M}_t^{m, \hat{\alpha}}=\mathcal{E}_{t,T}^{f}[\nabla\Tilde{\Phi}(\frac{\mathcal{A}_T^{ \hat{l},\hat{\gamma}}}{\mathcal{L}_T^{\hat{\gamma}}})]$, the first statement of $\eqref{cond1}$ is satisfied.\\

Since $\Tilde{\Phi}$ is a.s.incresing, we derive that $\nabla \Tilde{\Phi}(\frac{\mathcal{A}_T^{\hat{l}, \hat{\gamma}}}{\mathcal{L}_T^{\hat{\lambda}}})\geq 0$ a.s. Also, by construction, $\Tilde{\Phi}$ is a.s. $\textbf{1}$-Lipschitz, which implies that $\nabla \Tilde{\Phi}(\frac{\mathcal{A}_T^{\hat{l}, \hat{\gamma}}}{\mathcal{L}_T^{\hat{\lambda}}})\in [-1,1]$ a.s. We thus conclude that $\nabla \Tilde{\Phi}(\frac{\mathcal{A}_T^{\hat{l}, \hat{\gamma}}}{\mathcal{L}_T^{\hat{\lambda}}}) \in [0,1]$ a.s. and $\mathcal{E}^f_{0,T}[\nabla \Tilde{\Phi}(\frac{\mathcal{A}_T^{\hat{l}, \hat{\gamma}}}{\mathcal{L}_T^{\hat{\lambda}}})]=m$.\\

\noindent{\bf Step 2.}
First, recall that $(Y^{m, \hat{\alpha}},Z^{m, \hat{\alpha}})$ represents the solution of the BSDE with terminal condition $\Phi(\mathcal{M}_T^{m,\hat{\alpha}})$ and driver $g$, where by \textit{Step 1},  $\mathcal{M}_T^{m,\hat{\alpha}}=\nabla \Tilde{\Phi}(\frac{\mathcal{A}_T^{\hat{l}, \hat{\gamma}}}{\mathcal{L}_T^{\hat{\lambda}}})$.

Now, Assumption $\ref{HYP}$ (c) yields
\begin{align}
\Phi(\mathcal{M}_T^{m,\hat{\alpha}})=  \frac{\mathcal{M}_T^{m, \hat{\alpha}} \mathcal{A}_T^{\hat{l},\hat{\gamma}}}{\mathcal{L}_T^{\hat{\lambda}}}-\Tilde{\Phi}(\frac{ \mathcal{A}_T^{\hat{l},\hat{\gamma}}}{\mathcal{L}_T^{\hat{\lambda}}}).
\end{align}

Now, by  using the optimality of $\hat{\lambda}$, i.e. for all $\varepsilon >0$, $X_0^{\hat{l}, \hat{\gamma}, \lambda^\varepsilon} \geq X_0^{\hat{l}, \hat{\gamma}, \hat{\lambda}}$ and similar arguments as in \textit{Step 1}, we get:
\begin{align}\label{egal}
(Y^{m,\hat{\alpha}}, Z^{m,\hat{\alpha}})= (\hat{Y}, \hat{Z}),
\end{align}
where $(\hat{Y}, \hat{Z})$ corresponds to the solution of the BSDE associated to the terminal condition $\Phi(\mathcal{M}_T^{m,\hat{\alpha}})$ and driver $-\Tilde{g}(s,\hat{\mu}_s(\omega),\hat{\nu}_s(\omega))+y\mu_s(\omega)+z ^\top \hat{\nu}_s(\omega).$ Also by the same arguments given at Step 1, $\hat{Y}$ satisfies:
\begin{align}\label{Eq3}
\hat{Y}=(\mathcal{L}^{\hat{\lambda}})^{-1} \textbf{E}_{\cdot}[-\int_\cdot^T \Tilde{g}(s,\hat{\lambda}_s)\mathcal{L}_s^{\hat{\lambda}}+\mathcal{L}_T^{\hat{\lambda}}{\Phi}(\mathcal{M}_T^{m,\hat{\alpha}})].
\end{align}
%and $$\hat{U}:=\frac{\Bar{U}}{1+\hat{\eta}}-\hat{Y} \frac{\hat{\eta}}{1+\hat{\eta}},$$
%where $(\Bar{Z}, \Bar{U}) \in \textbf{H}^2 \times \textbf
%{H}^2_{\nu}$ are given by:
%\begin{align}\label{procdef}
%\mathcal{L}^{\hat{\lambda}}_t \hat{Y}_t=\mathcal{L}^{\hat{\lambda}}_T {\Phi}(\zeta)- \int_t^T\Tilde{g}(s,\hat{\lambda}_s)\mathcal{L}_s^{\hat{\lambda}}ds-\int_t^T\mathcal{L}_s^{\hat{\lambda}}\Bar{Z}_sdW_s -\int_t^T \mathcal{L}_s^{\hat{\lambda}}\Bar{U}_s(e)\Tilde{N}(ds,de).
%\end{align}
%and $\zeta:= \nabla \Tilde{\Phi}(\frac{\mathcal{A}_T^{ \hat{l},\hat{\gamma}}}{\mathcal{L}_T^{\hat{\lambda}}}).$ We recall that $(Y^{m, \hat{\alpha}, \hat{\beta}}, Z^{m, \hat{\alpha}, \hat{\beta}}, U^{m, \hat{\alpha}, \hat{\beta}})$ represents the solution of the BSDE associated to te terminal condition $\Phi(\zeta)$ and driver $g$.\\
Since by  $\eqref{egal}$ and $\eqref{Eq3}$ we have $\Hat{Y}_0=Y_0^{m,\Hat{\alpha}}$ and $\mathcal{L}^{\hat{\lambda}}_0=1$,  we obtain:
\begin{align}
Y_0^{m, \Hat{\alpha}}&=\textbf{E} \left[ \mathcal{L}^{\hat{\lambda}}_T \Phi(\mathcal{M}_T^{m,\hat{\alpha}})-\int_0^T \mathcal{L}^{\hat{\lambda}}_s \Tilde{g}(s, \Hat{\lambda})ds \right]=\textbf{E} \left[\mathcal{M}_T^{m,\hat{\alpha}} \mathcal{A}_T^{\Hat{l}, \hat{\gamma}} \right]\nonumber\\&-\textbf{E} \left[ \mathcal{L}^{\hat{\lambda}}_T \Tilde{\Phi}(\frac{\mathcal{A}_T^{\hat{l}, \hat{\gamma}}}{\mathcal{L}_T^{\hat{\lambda}}})+\int_0^T \mathcal{L}^{\hat{\lambda}}_s \Tilde{g}(s, \Hat{\lambda})ds \right].
\end{align}
Now, we appeal to \eqref{eq2}  and since by \textit{Step 1},  $\mathcal{M}_T^{m,\hat{\alpha}}=\nabla \Tilde{\Phi}(\frac{\mathcal{A}_T^{\hat{l}, \hat{\gamma}}}{\mathcal{L}_T^{\hat{\lambda}}})$,  we  get $\textbf{E} \left[\mathcal{M}_T^{m,\hat{\alpha}} \mathcal{A}_T^{\hat{l}, \hat{\gamma}} \right]=\Hat{l}\left(m+\textbf{E} \left[\int_0^T \Hat{\mathcal{A}}_s^{1, \hat{\gamma}} \Tilde{f}(s, \Hat{\gamma})ds \right] \right)$=$m \Hat{l}+\textbf{E} \left[\int_0^T \Hat{\mathcal{A}}_s^{\Hat{l}, \hat{\gamma}} \Tilde{f}(s, \Hat{\gamma})ds \right]$.
From the two above equalities, we finally obtain $$Y_0^{m, \Hat{\alpha}}=\Hat{l}m-\textbf{E} \left[ \mathcal{L}_T^{\hat{\lambda}} \Tilde \Phi(\mathcal{M}^{m,\hat{\alpha}}_T)-\int_0^T \Hat{\mathcal{A}}_s^{\Hat{l}, \hat{\gamma}} \Tilde{f}(s, \hat{\gamma})ds+\int_0^T \mathcal{L}_s^{\hat{\lambda}} \Tilde{g}(s, \Hat{\lambda})ds \right].$$
The above equality together with Proposition \ref{f11} give the desired  result.
\end{proof}

We now show that the existence of an optimal control in the primal problem implies the existence of an optimal control in the dual problem, under the following assumptions:

\begin{assumption}\label{A2}
\item[$(a)$] For each $(t,\omega)$, the maps $\Phi(\omega)$, $f(\omega,t,\cdot)$ and $g(\omega,t,\cdot)$ are $C_b^1$ on $[0,1]$ and $\mathbf{R} \times \mathbf{R}^d$ respectively;

\item[$(b)$] $|\nabla \Phi(\omega,\cdot)| \leq C_{\Phi}(\omega),$ for some $C_\Phi \in \textbf{L}_2(\mathbf{R}).$
\end{assumption}

\begin{proposition}
Let $l>0$ be fixed and assume that there exists $\Hat{m} \in [\mathcal{E}^{f}_{0,T}[0],\mathcal{E}^{f}_{0,T}[1]]$ and $\Hat{\alpha} \in \textbf{A}_{0,\hat{m}}$ such that
\begin{align}
\sup_{m \in [\mathcal{E}^f_{0,T}[0], \mathcal{E}^f_{0,T}[1]]}(ml-\mathcal{Y}_0(m))=\hat{m}l-Y_0^{\hat{m},\hat{\alpha}}.
\end{align}
Then, there exists $(\Hat{\lambda},\Hat{\gamma}) \in \mathcal{U} \times \mathcal{V} $ such that
\begin{align}
\mathcal{Y}_0(\Hat{m})=\Hat{m}l-\mathcal{X}_0(l)=\Hat{m}l-X_0^{l,\Hat{\gamma},\Hat{\lambda}}.
\end{align}
\end{proposition}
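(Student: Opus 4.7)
My plan is to construct the dual controls by linearising $g$ and $f$ along the primal optimum and then to sandwich $\mathcal{X}_0(l)$ between $X_0^{l,\hat\lambda,\hat\gamma}$ and $l\hat m - \mathcal{Y}_0(\hat m)$ using the weak duality of Proposition~\ref{f11}. I set
\[ \hat{\mu}_s := \partial_y g(s, Y_s^{\hat m,\hat\alpha}, Z_s^{\hat m,\hat\alpha}),\ \hat{\nu}_s := \partial_z g(s,Y_s^{\hat m,\hat\alpha},Z_s^{\hat m,\hat\alpha}),\ \hat{\kappa}_s := \partial_x f(s, \mathcal{M}_s^{\hat m,\hat\alpha}, \hat\alpha_s),\ \hat{\vartheta}_s := \partial_\pi f(s, \mathcal{M}_s^{\hat m,\hat\alpha}, \hat\alpha_s). \]
These are well-defined and bounded by $C_g, C_f$ under Assumption~\ref{A2}(a) together with the Lipschitz property of $f, g$, so $\hat{\lambda} := (\hat{\mu}, \hat{\nu}) \in \mathcal{U}$ and $\hat{\gamma} := (\hat{\kappa}, \hat{\vartheta}) \in \mathcal{V}$. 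The choice of gradients yields the pointwise Fenchel equalities $\tilde{g}(s,\hat{\lambda}_s) = Y_s^{\hat m,\hat\alpha}\hat{\mu}_s + Z_s^{\hat m,\hat\alpha,\top}\hat{\nu}_s - g(\cdot)$ and $\tilde{f}(s,\hat{\gamma}_s) = \mathcal{M}_s^{\hat m,\hat\alpha}\hat{\kappa}_s + \hat{\alpha}_s^\top\hat{\vartheta}_s - f(\cdot)$, so that applying It\^o to $\mathcal{L}^{\hat\lambda} Y^{\hat m,\hat\alpha}$ and to $\mathcal{A}^{l,\hat\gamma} \mathcal{M}^{\hat m,\hat\alpha}$ (the stochastic integrals being true martingales by Burkholder--Davis--Gundy and the boundedness of $\hat{\lambda},\hat{\gamma},\hat{\alpha}$) produces
\[ Y_0^{\hat m,\hat\alpha} = \textbf{E}[\mathcal{L}_T^{\hat\lambda} \Phi(\mathcal{M}_T^{\hat m,\hat\alpha})] - \textbf{E}\Big[\int_0^T \mathcal{L}_s^{\hat\lambda} \tilde{g}(s,\hat{\lambda}_s)\,ds\Big], \]
\[ \textbf{E}[\mathcal{A}_T^{l,\hat\gamma} \mathcal{M}_T^{\hat m,\hat\alpha}] = l\hat m + \textbf{E}\Big[\int_0^T \mathcal{A}_s^{l,\hat\gamma} \tilde{f}(s,\hat{\gamma}_s)\,ds\Big]. \]

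The main obstacle is the remaining terminal Fenchel identity $\mathcal{A}_T^{l,\hat\gamma}/\mathcal{L}_T^{\hat\lambda} = \nabla\Phi(\mathcal{M}_T^{\hat m,\hat\alpha})$, which together with Assumption~\ref{HYP}(c) would give $\mathcal{L}_T^{\hat\lambda}\Phi(\mathcal{M}_T^{\hat m,\hat\alpha}) = \mathcal{A}_T^{l,\hat\gamma}\mathcal{M}_T^{\hat m,\hat\alpha} - \mathcal{L}_T^{\hat\lambda}\tilde{\Phi}(\mathcal{A}_T^{l,\hat\gamma}/\mathcal{L}_T^{\hat\lambda})$. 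To establish it I exploit the primal first-order optimality conditions. For variations $\alpha^\varepsilon = \hat\alpha + \varepsilon\delta\alpha$ with $\delta\alpha \in \mathbf{H}_2$ bounded, admissibility of $\alpha^\varepsilon$ for small $\varepsilon$ follows from a strict-interior/localisation argument on $\mathcal{M}^{\hat m,\hat\alpha}$. Applying It\^o to $\mathcal{L}^{\hat\lambda}\delta Y$ and to $\mathcal{A}^{1,\hat\gamma}\delta\mathcal{M}$ makes them local martingales, for the drift contributions cancel exactly by the definition of $\hat\lambda, \hat\gamma$ as the respective gradients; the necessary condition $\delta Y_0 = 0$ then reads $\textbf{E}[\mathcal{L}_T^{\hat\lambda}\nabla\Phi(\mathcal{M}_T^{\hat m,\hat\alpha})\,\delta\mathcal{M}_T] = 0$ for every $\delta\mathcal{M}_T$ in the subspace $K := \{\xi \in L^2(\mathcal{F}_T): \textbf{E}[\mathcal{A}_T^{1,\hat\gamma}\xi] = 0\}$. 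This subspace is the full image of the linearised state map: given $\xi \in K$, the process $(\mathcal{A}^{1,\hat\gamma})^{-1}\textbf{E}_t[\mathcal{A}_T^{1,\hat\gamma}\xi]$ starts at $0$ and ends at $\xi$, and its Brownian-integrand, combined with the ratio decomposition, produces a $\delta\alpha$ realising $\delta\mathcal{M}_T = \xi$. $L^2$-orthogonality therefore forces $\mathcal{L}_T^{\hat\lambda}\nabla\Phi(\mathcal{M}_T^{\hat m,\hat\alpha}) = c\,\mathcal{A}_T^{1,\hat\gamma}$ for some constant $c$.

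The constant is pinned down by the $m$-variation. The envelope theorem applied at $(\hat m,\hat\alpha)$ together with the optimality of $\hat m$ in $\sup_m(lm - \mathcal{Y}_0(m))$ and the $C_b^1$ regularity of $\Phi$ gives $l = \partial_m Y_0^{m,\hat\alpha}|_{m=\hat m} = \textbf{E}[\mathcal{L}_T^{\hat\lambda}\nabla\Phi(\mathcal{M}_T^{\hat m,\hat\alpha})\,\xi_T]$, where $\xi_t = \exp(-\int_0^t \hat\kappa_s\,ds)$ is the sensitivity $\partial_m \mathcal{M}_t^{m,\hat\alpha}$. Inserting $\mathcal{L}_T^{\hat\lambda}\nabla\Phi(\mathcal{M}_T^{\hat m,\hat\alpha}) = c\,\mathcal{A}_T^{1,\hat\gamma}$ and noting that $\mathcal{A}_T^{1,\hat\gamma}\xi_T = \exp\bigl(-\tfrac{1}{2}\int_0^T|\hat\vartheta_s|^2\,ds + \int_0^T\hat\vartheta_s^\top dW_s\bigr)$ is the Dol\'eans exponential of $\int_0^\cdot \hat\vartheta^\top dW$---a true martingale of terminal expectation $1$ by the boundedness of $\hat\vartheta$ (Remark~\ref{boundness})---I obtain $c = l$. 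Combining the three equalities then produces $Y_0^{\hat m,\hat\alpha} = l\hat m - X_0^{l,\hat\lambda,\hat\gamma}$; together with the weak-duality chain $X_0^{l,\hat\lambda,\hat\gamma} \geq \mathcal{X}_0(l) \geq l\hat m - \mathcal{Y}_0(\hat m) = l\hat m - Y_0^{\hat m,\hat\alpha}$ coming from Proposition~\ref{f11} this forces all inequalities to be equalities, yielding $\mathcal{X}_0(l) = X_0^{l,\hat\lambda,\hat\gamma} = l\hat m - \mathcal{Y}_0(\hat m)$ and completing the proof.
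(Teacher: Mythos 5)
Your choice of dual controls as the gradients of $g$ and $f$ along the optimal trajectory, the driver Fenchel equalities, the It\^o computations for $\mathcal{L}^{\hat\lambda}Y^{\hat m,\hat\alpha}$ and $\mathcal{A}^{l,\hat\gamma}\mathcal{M}^{\hat m,\hat\alpha}$, and the final weak-duality sandwich via Proposition \ref{f11} all coincide with the paper's argument. The gap is concentrated exactly where you locate the ``main obstacle'': your derivation of the terminal identity $\mathcal{A}_T^{l,\hat\gamma}/\mathcal{L}_T^{\hat\lambda}=\nabla\Phi(\mathcal{M}_T^{\hat m,\hat\alpha})$ does not go through. You fix $\hat m$, perturb $\alpha$ two-sidedly, and read off the stationarity condition $\delta Y_0=0$ on the whole subspace $K$, concluding $\mathcal{L}_T^{\hat\lambda}\nabla\Phi(\hat{\mathcal{M}}_T)=c\,\mathcal{A}_T^{1,\hat\gamma}$ by orthogonality. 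But the admissibility constraint $\mathcal{M}^{\hat m,\alpha}\in[\mathcal{E}^f_{\cdot,T}[0],\mathcal{E}^f_{\cdot,T}[1]]$ is one-sided wherever $\hat{\mathcal{M}}$ touches the boundary of the band, and in this class of problems the optimal terminal value typically \emph{does} sit on the boundary $\{0,1\}$ on a set of positive measure (think of quantile hedging, where $\hat{\mathcal{M}}_T$ is an indicator). A ``strict-interior/localisation argument'' cannot remove this: two-sided variations are simply not admissible there, so the first-order condition is only a variational inequality, and the pointwise gradient identity — which is strictly stronger than the Fenchel equality you actually need — can fail. Your determination of $c=l$ compounds the problem: the envelope-theorem step requires differentiability of $m\mapsto\mathcal{Y}_0(m)$ and interiority of $\hat m$ in $[\mathcal{E}^f_{0,T}[0],\mathcal{E}^f_{0,T}[1]]$, neither of which is granted by the hypotheses.

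The paper circumvents all of this by never splitting the optimality into ``optimal $\alpha$ at fixed $\hat m$'' plus ``optimal $\hat m$''. It perturbs along convex combinations $\mathcal{M}^\varepsilon_\cdot=\mathcal{E}^f_{\cdot,T}[\hat{\mathcal{M}}_T+\varepsilon(\mathcal{M}_T-\hat{\mathcal{M}}_T)]$ towards an \emph{arbitrary} admissible $f$-martingale $\mathcal{M}$ (automatically admissible by convexity of $[0,1]$, so only $\varepsilon\in[0,1]$ is needed), lets the initial datum $m_\varepsilon=\mathcal{M}^\varepsilon_0$ move with $\varepsilon$, and invokes the joint optimality $Y_0^\varepsilon-lm_\varepsilon\geq\hat Y_0-l\hat m$. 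After differentiating the two BSDEs in $\varepsilon$ (its Steps 1--2, which you skip) this yields the one-sided condition
$\textbf{E}\bigl[\mathcal{L}_T^{\hat\lambda}\,\delta\mathcal{M}_T\bigl(\nabla\Phi(\hat{\mathcal{M}}_T)-\mathcal{A}_T^{l,\hat\gamma}/\mathcal{L}_T^{\hat\lambda}\bigr)\bigr]\geq 0$
for all $\mathcal{M}_T$ valued in $[0,1]$, hence that $\hat{\mathcal{M}}_T$ minimizes $m\mapsto\Phi(\omega,m)-m\,\mathcal{A}_T^{l,\hat\gamma}/\mathcal{L}_T^{\hat\lambda}$ over the compact $[0,1]$. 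That gives the Fenchel equality $\hat{\mathcal{M}}_T\mathcal{A}_T^{l,\hat\gamma}-\mathcal{L}_T^{\hat\lambda}\Phi(\hat{\mathcal{M}}_T)=\mathcal{L}_T^{\hat\lambda}\Tilde{\Phi}(\mathcal{A}_T^{l,\hat\gamma}/\mathcal{L}_T^{\hat\lambda})$ directly, with no interiority, no gradient identity, and no separate computation of a constant. You should replace your orthogonality-plus-envelope argument by this joint perturbation; the rest of your proof can then be kept as is.
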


\begin{proof}
We use some similar arguments as in \cite{BER}. However, the proof is more involved as we also have to deal with the nonlinear driver $f$. The proof is divided in three steps.

\medskip
\noindent {\bf Step 1.}
Let $\mathcal{M}_\cdot$ be an arbitrary $f$-martingale valued in $[\mathcal{E}_\cdot^f[0], \mathcal{E}_\cdot^f[1]]$ and $\varepsilon \in [0,1]$. We denote by $\mathcal{M}^\varepsilon$ the process defined as $\mathcal{M}_\cdot^\varepsilon:= \mathcal{E}_{\cdot,T}^f \left[\Hat{\mathcal{M}}_T+\varepsilon(\mathcal{M}_T-\mathcal{\Hat{M}}_T) \right]$, where $\Hat{\mathcal{M}}:=\mathcal{M}^{\hat{m}, \hat{\alpha}}$. We set $m_\varepsilon:=\mathcal{M}_0^\varepsilon$ and $(\delta \mathcal{M}, \delta \alpha):=(\mathcal{M}-\hat{\mathcal{M}}, \alpha- \hat{\alpha})$.\\
We now consider the BSDE associated to $\delta \mathcal{M}_T$  and generator: $$h_1(t, \omega,u,v):= \nabla_x f(t, \omega,\hat{\mathcal{M}}_t(\omega), \hat{\alpha}_t(\omega))u+ \nabla_\pi f(t, \omega,\hat{\mathcal{M}}_t(\omega), \hat{\alpha}_t(\omega))^\top v.$$
Since  $\delta \mathcal{M}_T$  belongs to $\textbf{L}_2(\mathcal{F}_T)$  and since by Assumption $\ref{A2}$ on the coefficient $f$, $h$ is uniformly Lipschitz in $(u,v)$ with respect to $(t,\omega)$ , we conclude that the above  BSDE admits an unique solution. This unique solution will be denoted by $(\nabla M, \nabla \alpha)$.

Our aim is to show that $\varepsilon^{-1}( \delta \mathcal{M}^\varepsilon, \delta \alpha^{\varepsilon})$ converges in $\mathbf{S}_2 \times \textbf{H}_2$ as $\varepsilon \rightarrow 0$ to  $(\nabla M, \nabla \alpha).$
%\begin{align}
%\nabla \mathcal{M}_t= \delta \mathcal{M}_T+\int_t^T \nabla_m f(s,\Hat{\mathcal{M}}_s, \Hat{\alpha}_s, \Hat{\beta}_s)\nabla \mathcal{M}_s ds+\int_t^T \nabla_\alpha f(s,\Hat{\mathcal{M}}_s, \Hat{\alpha}_s, \Hat{\beta}_s)^{\top}\nabla \alpha_s ds -\int_t^T \nabla \alpha_s^\top dW_s.
%\end{align}
%Note that the above BSDE admits an unique solution, since the terminal condition $\delta \mathcal{M}_T$ belongs to $\textbf{L}_2$ and its driver\\ $h(t, \omega,u,v):= \nabla_m f(t, \omega,\hat{\mathcal{M}}_t(\omega), \hat{\alpha}_t(\omega))u+ \nabla_\alpha f(t, \omega,\hat{\mathcal{M}}_t(\omega), \hat{\alpha}_t(\omega))^\top v$

First, observe that $\varepsilon^{-1}(\delta \mathcal{M}_s^\varepsilon, \delta \alpha_s^\varepsilon)$ solves the following equation:
\begin{align}
\dfrac{\delta \mathcal{M}_t^\varepsilon}{\varepsilon} = \delta \mathcal{M}_T+ \int_t^T \left(B_s^{\mathcal{M},\varepsilon} \frac{\delta \mathcal{M}_s^\varepsilon}{\varepsilon}+B_s^{\alpha,\varepsilon,\top} \frac{\delta \alpha_s^\varepsilon}{\varepsilon}\right)ds-\int_t^T \frac{\delta \alpha_s^\varepsilon}{\varepsilon}^\top dW_s,
\end{align}
where\\
$$B_s^{\mathcal{M},\varepsilon}:=\int_0^1 \nabla_x f \left(s, \Hat{\mathcal{M}}_s+r \delta \mathcal{M}_s^\varepsilon, \hat{\alpha}_s\right)dr;\,\, B_s^{\alpha,\varepsilon}:=\int_0^1 \nabla_\pi f \left(s, \Hat{\mathcal{M}}_s, \hat{\alpha}_s+r \delta \alpha_s^\varepsilon\right)dr.$$

We now introduce the processes $\Xi^\varepsilon:=\varepsilon^{-1} \delta \mathcal{M}^\varepsilon-\nabla \mathcal{M}$ and  $\Pi^\varepsilon:=\varepsilon^{-1} \delta \alpha^\varepsilon-\nabla \alpha$. We can remark that $(\Xi^\varepsilon,\Pi^\varepsilon)$ solves the BSDE associated to terminal condition $0$ and driver:
$$h_2(t,\omega,u,v):=B_t^{\mathcal{M},\varepsilon}(\omega)u+B_t^{\alpha,\varepsilon}(\omega)^\top v+D_t^\varepsilon(\omega),$$ where
$D_t^\varepsilon:= \nabla \mathcal{M}_t \left(B_t^{\mathcal{M},\varepsilon}-\nabla_x f(t, \Hat{\mathcal{M}_t}, \hat{\alpha}_t)\right)+\nabla \alpha_t ^\top \left(B_t^{\alpha,\varepsilon}-\nabla_\pi f(t, \Hat{\mathcal{M}_t}, \hat{\alpha}_t)\right).$\\
We apply the stability result with  BSDE$(\xi,h_2)$ and BSDE$(\xi,0)$, where $\xi=0$.
We thus get:
\begin{align}\label{convv1}
|\!|\Xi^\varepsilon|\!|_{\mathbf{S}_2}+|\!|\Pi^\varepsilon|\!|_{\textbf{H}_2} \leq C |\!|D^\varepsilon|\!|_{\textbf{H}_2}.
\end{align}
In order to show the convergence of $|\!|D^\varepsilon|\!|_{\textbf{H}_2}$ to $0$ when $\varepsilon \rightarrow 0$, we  prove that $(\mathcal{M}^\varepsilon, \alpha^\varepsilon)$ converges to $(\mathcal{M}, \alpha)$ in $\mathbf{S}_2 \times \textbf{H}_2$. To this purpose, we apply again the stability result for BSDEs and obtain:
\begin{align} \label{convv}
|\!|\mathcal{M}^\varepsilon-\mathcal{M}|\!|^2_{\mathbf{S}_2}+|\!|\alpha^\varepsilon-\alpha|\!|^2_{\textbf{H}_2}\leq C(|\!|\mathcal{M}_T^\varepsilon-\mathcal{M}_T|\!|^2_{\textbf{L}_2})  \rightarrow _{\varepsilon \rightarrow 0} 0.
\end{align}

By \eqref{convv}, Assumption $\ref{A2}$ and the Lebesgue's Theorem, we get that $|\!|D^\varepsilon|\!|_{\textbf{H}_2} \rightarrow 0$ when $\varepsilon \rightarrow  0$. Finally, by $\eqref{convv1}$, we derive that $\varepsilon^{-1}( \delta \mathcal{M}^\varepsilon, \delta \alpha^{\varepsilon})$ converges in $\mathbf{S}_2 \times \textbf{H}_2$ to $(\nabla M, \nabla \alpha)$ as $\varepsilon \rightarrow 0$.

\medskip

\noindent {\bf Step 2.}
We denote by $(Y^\varepsilon, Z^\varepsilon)$ the solution of the BSDE$(g, \Phi(\mathcal{M}_T^\varepsilon))$ and we set $(\hat{Y}, \hat{Z}):=(Y^{m, \hat{\alpha}}, Z^{m, \hat{\alpha}}).$ Using the same arguments as in \textit{Step 2}, one can show that $(\frac{\delta Y^\varepsilon}{\varepsilon},\frac{\delta Z^\varepsilon}{\varepsilon}):= (\frac{Y^\varepsilon-\hat{Y}}{\varepsilon},\frac{Z^\varepsilon-\hat{Z}}{\varepsilon})$ converges in $\mathbf{S}_2 \times \textbf{H}_2$ to the unique solution $(\nabla Y, \nabla Z)$ of the following BSDE:
\begin{align}
\nabla Y_t &= \nabla \Phi(\hat{\mathcal{M}}_T)\delta \mathcal{M}_T+\int_t^T \nabla_yg(s,\hat{Y}_s, \hat{Z}_s) \nabla Y_s ds\nonumber\\&+ \int_t^T \nabla_z g(s,\hat{Y}_s, \hat{Z}_s)^\top \nabla Z_s ds -\int_t^T \nabla Z_s^\top dW_s.
\end{align}

\noindent{\bf Step 3.} Since $(\Hat{m}, \Hat{\alpha})$ is optimal, we have $Y_0^\varepsilon-m_\varepsilon-\Hat{Y}_0+\Hat{m}l \geq 0$, for any $\varepsilon >0$. Dividing now by $\varepsilon >0$ and sending $\varepsilon \rightarrow 0$, we get
\begin{align}\label{ineqqq}
0 &\leq \nabla \Phi(\hat{\mathcal{M}_T}) \delta \mathcal{M}_T+\int_0^T \nabla g(s, \hat{Y}_s, \hat{Z}_s)^\top (\nabla{Y}_s, \nabla{Z}_s)ds-\int_0^T \nabla Z_s^\top dW_s\\&
-l \left( \delta \mathcal{M}_T+ \int_0^T \nabla f(s, \hat{\mathcal{M}}_s, \hat{\alpha}_s)^\top(\nabla{\mathcal{M}}_s, \nabla{\alpha}_s)ds-\int_0^T \nabla \alpha_s ^\top dW_s\right)= \nabla Y_0 - l  \nabla \mathcal{M}_0.\nonumber
\end{align}

We set $\hat{\gamma}_t:= \nabla f(s,\hat{\mathcal{M}}_t, \hat{\alpha}_t)$ and $\hat{\lambda}_t:= \nabla g(s,\hat{Y}_t, \hat{Z}_t)$, which belong to $\mathcal{V}$ and, respectively, $\mathcal{U}$. Since $\hat{\gamma}_t$ ( resp. $\hat{\lambda}_t$) belongs to the superdifferential of $f$ at $(\mathcal{M}_t, \hat{\alpha}_t)$ (resp. the subdifferential of $g$ at $(\hat{Y}_t, \hat{Z}_t)$) we have (see \cite{BP}):
\begin{align}\label{f1}
 f(\cdot,\hat{\mathcal{M}}, \hat{\alpha})= \hat{\kappa}\hat{\mathcal{M}}+ \hat{\vartheta}^\top \hat{\alpha}-\Tilde{f}(\cdot, \hat{\gamma}).
\end{align}
and
\begin{align}\label{f2}
 g(\cdot,\hat{Y}, \hat{Z})= \hat{\mu}\hat{Y}+ \hat{\nu}^\top \hat{Z}-\Tilde{g}(\cdot, \hat{\lambda}).
\end{align}

Now, by applying Ito's formula, we obtain that  $\mathcal{A}^{l,\hat{\gamma}} \nabla {\mathcal{M}}$ and $\mathcal{L}^{\hat{\lambda}} \nabla {Y}$ are martingales. As $\mathcal{L}_0^{\hat{\lambda}}=1$ and  \eqref{ineqqq} holds, we thus obtain:
\begin{align}
\hat{\mathcal{L}}_0 \nabla Y_0-l \nabla \mathcal{M}_0&= E\left[{\mathcal{L}}^{\hat{\lambda}}_T \nabla Y_T-\mathcal{A}^{\hat{l}, \hat{\gamma}}_T \nabla \mathcal{M}_T \right]\nonumber\\&=E \left[\mathcal{L}^{\hat{\lambda}}_T \delta {\mathcal{M}}_T \left(\nabla \Phi(\hat{\mathcal{M}}_T)- \frac{\mathcal{A}^{\hat{l}, \hat{\gamma}}_T}{\mathcal{L}^{\hat{\lambda}}_T} \right)  \right] \geq 0.
\end{align}
Since $\mathcal{M}_T$ can be arbitrary choses with values in $[0,1]$, we obtain that $\hat{\mathcal{M}}_T(\omega)$ minimizes the map $m \in [0,1] \mapsto \Phi(\omega,m)-m \frac{{\mathcal{A}}^{l,\hat{\gamma}}_T}{{\mathcal{L}}^{\hat{\lambda}}_T}(\omega).$ Thus, we obtain: $\hat{\mathcal{M}}_T \mathcal{A}_T^{\hat{l}, \hat{\gamma}}-\mathcal{L}^{\hat{\lambda}}_T \Phi(\hat{\mathcal{M}}_T)=\mathcal{L}^{\hat{\lambda}}_T \Tilde{\Phi}(\frac{\mathcal{A}_T^{\hat{l}, \hat{\gamma}}}{\mathcal{L}^{\hat{\lambda}}_T})$. This inequality together with \eqref{f1}, \eqref{f2} and Ito's formula allow to conclude that $l\hat{m}-\hat{Y}_0=X_0^{l, \hat{\lambda}, \hat{\gamma}}.$ The conclusion follows by Proposition
\ref{f11}.
\end{proof}

\appendix
\section{Appendix}

\textbf{Proof of Proposition} \ref{link}. The proof is standard. We provide it for completeness.
Let $(Y,Z)$ be a supersolution of $BSDE (g, f,\Psi,\mu,\tau)$. Now, the BSDE representation of $\Psi(Y_T)$ implies that it exists $\bar{\alpha} \in \mb{A}_{\tau, \rho}$ such that $\Psi(Y_{T}) = \cal{M}_{T}^{\tau,\rho,\bar{\a}}$, where $\rho:= \cal{E}_{\tau,T}^{f}[\Psi(Y_{T})].$   Since condition $\eqref{Equ2}$ is satisfied, we have $\rho \geq \mu$ a.s.
We define the following stopping time
$$
\sigma^{\bar{\a}}:=\inf \{\tau \leq s \leq T: \cal{M}_{s}^{\tau,\mu,\bar{\a}} = \cal{E}_{s,T}^{f}[0]\} \wedge T,
$$
with the convention $\inf \emptyset=+\infty.$
Recall that  $(Y^{0},Z^{0})$ represents the solution of the BSDE associated to driver $f$ and terminal condition
$0$. We define the control $\Tilde \a$ as follows:
\begin{equation}
\Tilde \a_{s}: = \bar{\a}_{s}\mb{1}_{\{s\leq \sigma^{\bar{\a}}\}} + Z_{s}^{0}\mb{1}_{\{s>\sigma^{\bar{\a}}\}}.
\end{equation}
Note that $\Tilde \a$ belongs to $\textbf{A}_{\tau,\mu}$. The control is constructed in such a way that $\mathcal{M}_\cdot^{\tau,\mu, \Tilde{\alpha}}$ belongs to $[\mathcal{E}_{\cdot,T}^f[0], \mathcal{E}_{\cdot,T}^f[1]]$. We have not considered the hitting time of the process $\mathcal{E}_{\cdot,T}^f[1]$, since clearly $\mathcal{M}_\cdot^{\tau,\mu, \bar{\alpha}} \leq \mathcal{M}_\cdot^{\tau,\rho, \bar{\alpha}}$.
 We can easily remark that $\cal{M}_{T}^{\tau,\rho,\tilde{\a}} \geq \cal{M}_{T}^{\tau,\mu,{\a}}$ a.s. The
monotonocity of $\Phi$ and the identity $\Psi(Y_{T}) = \cal{M}_{T}^{\tau,\rho,\tilde{\a}}$ imply that
\begin{equation}
Y_{T}\geq (\Phi \circ \Psi) (Y_{T})\geq \Phi(\cal{M}_{T}^{\tau,\mu, {\a}}).
\end{equation}
Hence, by the comparison theorem for  BSDEs, we obtain that $Y_{t}\geq \cal{E}_{t,T}^{g}[\Phi(\cal{M}_{T}^{\tau,\mu, {\a}})]$ for $t\in [0,T]$. Conversely, let $\a \in \textbf{A}_{\tau, \mu}$ be such that $Y_{t}\geq \cal{E}_{t,T}^{g}[\Phi(\cal{M}_{T}^{\tau,\mu, {\a}})]$ for $t\in [0,T]$
and suppose that $(Y,Z)$ satifies \eqref{Equ1}. We thus get
$$
\Psi(Y_{T})\geq (\Psi \circ \Phi) (\cal{M}_{T}^{\tau,\mu,{\a}})\geq \cal{M}_{T}^{\tau,\mu,\a}.
$$
Taking the $f$-conditional expectation on both sides, the result follows.

\begin{lemma}\label{sequence}
Fix $\theta, \nu \in \mathcal{T}$, with $\theta \geq \tau, \mu \in \textbf{D}_\tau$ and $\alpha \in \textbf{A}_{\tau,\mu}.$ Then there exists a sequence $(\alpha'_n) \subset \textbf{A}_{\tau,\mu}^{\theta,\alpha}:= \{\alpha' \in \textbf{A}_{\tau,\mu}, \alpha' \textbf{1}_{[0,\theta)}=\alpha\textbf{1}_{[0,\theta)}\}$ such that $\lim_{n\rightarrow \infty} \downarrow \mathcal{E}_{\theta,T}^g[\Phi(\mathcal{M}_T^{\tau,\mu,\alpha'_n})]=\mathcal{Y}_\theta^\alpha(\mathcal{M}_\theta^{\tau,\mu,\alpha})$ a.s.
\end{lemma}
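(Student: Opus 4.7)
Write $J(\alpha'):=\mathcal{E}_{\theta,T}^{g}\bigl[\Phi(\mathcal{M}_T^{\tau,\mu,\alpha'})\bigr]$ for $\alpha'\in \textbf{A}_{\tau,\mu}^{\theta,\alpha}$. I first observe that
\[
\essinf_{\alpha'\in \textbf{A}_{\tau,\mu}^{\theta,\alpha}} J(\alpha') \;=\; \mathcal{Y}_\theta^{\alpha}\bigl(\mathcal{M}_\theta^{\tau,\mu,\alpha}\bigr)\quad \text{a.s.}
\]
This is essentially a matter of unpacking definitions: every $\alpha'\in \textbf{A}_{\tau,\mu}^{\theta,\alpha}$ coincides with $\alpha$ on $[\![0,\theta]\!]$, hence $\mathcal{M}_\theta^{\tau,\mu,\alpha'}=\mathcal{M}_\theta^{\tau,\mu,\alpha}$, and the restriction of $\alpha'$ to $[\![\theta,T]\!]$ is exactly an element of $\textbf{A}_{\theta,\mathcal{M}_\theta^{\tau,\mu,\alpha}}$ coinciding with $\alpha$ on $[\![0,\theta]\!]$; the correspondence is a bijection and preserves the value of $J$. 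In view of this identity, the lemma reduces to the claim that the essential infimum of the family $\{J(\alpha')\}$ is attained as a decreasing limit.

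To invoke Neveu's classical characterization of the essential infimum as the decreasing pointwise limit of a sequence in the family, I need to check that $\{J(\alpha')\colon\alpha'\in \textbf{A}_{\tau,\mu}^{\theta,\alpha}\}$ is directed downward. Given $\alpha'_1,\alpha'_2\in \textbf{A}_{\tau,\mu}^{\theta,\alpha}$, set $B:=\{J(\alpha'_1)\leq J(\alpha'_2)\}\in \mathcal{F}_\theta$ and define
\[
\alpha'_3 \;:=\; \alpha\,\textbf{1}_{[\![0,\theta)\!)} \;+\; \bigl(\alpha'_1\,\textbf{1}_B + \alpha'_2\,\textbf{1}_{B^c}\bigr)\,\textbf{1}_{[\![\theta,T]\!]}.
\]
Clearly $\alpha'_3$ is predictable and coincides with $\alpha$ on $[\![0,\theta)\!)$. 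By uniqueness of the forward SDE defining $\mathcal{M}^{\tau,\mu,\cdot}$ started from $\mathcal{M}_\theta^{\tau,\mu,\alpha}$ at time $\theta$, the process $\mathcal{M}^{\tau,\mu,\alpha'_3}$ equals $\mathcal{M}^{\tau,\mu,\alpha'_1}$ on $B$ and $\mathcal{M}^{\tau,\mu,\alpha'_2}$ on $B^c$ throughout $[\![\theta,T]\!]$; in particular it remains in $[Y^0,Y^1]$, so $\alpha'_3\in \textbf{A}_{\tau,\mu}^{\theta,\alpha}$. The same uniqueness argument applied to the BSDE with driver $g$ on $[\![\theta,T]\!]$ gives $J(\alpha'_3)=J(\alpha'_1)\textbf{1}_B+J(\alpha'_2)\textbf{1}_{B^c}=J(\alpha'_1)\wedge J(\alpha'_2)$, establishing the directed-downward property.

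With this stability at hand, the standard lattice argument (see e.g.\ Neveu) produces a sequence $(\alpha'_n)\subset \textbf{A}_{\tau,\mu}^{\theta,\alpha}$ such that $J(\alpha'_n)\downarrow \essinf_{\alpha'} J(\alpha')=\mathcal{Y}_\theta^{\alpha}(\mathcal{M}_\theta^{\tau,\mu,\alpha})$ a.s., which is the claimed property. The only delicate point is the pasting construction: one has to justify that gluing two admissible controls along an $\mathcal{F}_\theta$-measurable set produces admissible forward and backward processes whose values on each piece of $B,B^c$ are precisely the corresponding originals. This is handled by strong uniqueness for the SDE/BSDE (together with the fact that $B\in \mathcal{F}_\theta$, so multiplying coefficients and terminal data by $\textbf{1}_B$ still yields a well-posed equation on $[\![\theta,T]\!]$); once this pasting is in place, the rest of the argument is routine and parallel to the analogous statement in \cite{BER}.
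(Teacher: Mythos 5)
Your proof is correct and follows the same route as the paper: identify the essential infimum of the family $\{J(\alpha')\}$ with $\mathcal{Y}_\theta^{\alpha}(\mathcal{M}_\theta^{\tau,\mu,\alpha})$, show the family is directed downward by pasting two controls along the $\mathcal{F}_\theta$-measurable set $\{J(\alpha'_1)\leq J(\alpha'_2)\}$, and conclude by the standard lattice argument. Your additional remarks on admissibility of the pasted control and on strong uniqueness for the forward SDE and the BSDE (the zero-one law) simply make explicit what the paper leaves implicit.
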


\begin{proof}
In order to obtain the desired result, we only have to prove that $$\{J(\alpha'):=\mathcal{E}_{\theta,T}^g[\Phi(\mathcal{M}_T^{\tau,\mu,\alpha'})],\;\;\alpha' \in \textbf{A}_{\tau,\mu}^{\theta,\alpha}\}$$ is directed downward. Set $A:=\{J(\alpha'_1) \leq J(\alpha'_2)\} \in \mathcal{F}_\theta$ and fix $\alpha'_1, \alpha'_2 \in \textbf{A}_{\tau,\mu}^{\theta,\alpha}$. We denote $\Tilde{\alpha}':=\alpha \textbf{1}_{[0,\theta)}+\textbf{1}_{[\theta,T]}(\alpha'_1 \textbf{1}_A+\alpha'_2 \textbf{1}_{A^c}).$ Note that $\Tilde{\alpha}' \in  \textbf{A}_{\tau,\mu}^{\theta,\alpha}.$ We get: $J(\Tilde{\alpha}')=\mathcal{E}_{\theta,T}[\Phi(\mathcal{M}_\theta^{\tau,\mu,\alpha'_1})\textbf{1}_A+\Phi(\mathcal{M}_\theta^{\tau,\mu,\alpha'_2})\textbf{1}_{A^c}]=\min \{J(\alpha'_1), J(\alpha'_2)\}.$
\end{proof}

\begin{theorem}\label{monotonicity}
Fix $t \in [0,T]$. The map $\cal{Y}_{t}:\mu\to \cal{Y}_{t}(\mu); \,\ \textbf{D}_t \mapsto \textbf{L}_2$; is non-decreasing, i.e. for all $\mu_{1},\mu_{2}\in \textbf{D}_t$, we have $\cal{Y}_{t}(\mu_{1})\leq \cal{Y}_{t}(\mu_{2})$ on $\{\mu_1 \leq \mu_2 \}$ and $\cal{Y}_{t}(\mu_{1})\geq \cal{Y}_{t}(\mu_{2})$ on $\{\mu_1 \geq \mu_2 \}$.
\end{theorem}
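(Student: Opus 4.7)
The plan is to establish the monotonicity in two reductions: first an $\mathcal{F}_{t}$-localization that brings us back to the almost-sure case, then a direct control-pasting argument for that case. The preliminary local property I would prove is: if $\mu=\nu$ on some $B\in \mathcal{F}_{t}$, then $\mathcal{Y}_{t}(\mu)=\mathcal{Y}_{t}(\nu)$ on $B$. Indeed, given $\alpha\in \textbf{A}_{t,\mu}$ and $\beta\in \textbf{A}_{t,\nu}$, the process $\gamma:=\mathbf{1}_{B}\alpha+\mathbf{1}_{B^{c}}\beta$ is predictable because $B\in \mathcal{F}_{t}$, and by uniqueness of the forward SDE defining $\mathcal{M}^{t,\cdot,\cdot}$ and of the $g$-BSDE, $\gamma$ is admissible for the initial condition $\mathbf{1}_{B}\mu+\mathbf{1}_{B^{c}}\nu$ and realises the same value of $\mathcal{E}_{t,T}^{g}[\Phi(\mathcal{M}_{T}^{t,\cdot,\cdot})]$ as $\alpha$ on $B$; the symmetric pasting yields the equality. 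Applied with $B:=\{\mu_{1}\le \mu_{2}\}\in \mathcal{F}_{t}$ and the replacements $\tilde{\mu}_{1}:=\mu_{1}\wedge \mu_{2}$, $\tilde{\mu}_{2}:=\mu_{1}\vee \mu_{2}$, this reduces the first inequality of the theorem to the a.s. statement $\tilde{\mu}_{1}\le \tilde{\mu}_{2}\text{ a.s.}\;\Rightarrow\;\mathcal{Y}_{t}(\tilde{\mu}_{1})\le \mathcal{Y}_{t}(\tilde{\mu}_{2})\text{ a.s.}$

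For the a.s. case, assume $\mu_{1}\le \mu_{2}$ a.s. and fix an arbitrary $\alpha^{2}\in \textbf{A}_{t,\mu_{2}}$. Feeding the same control $\alpha^{2}$ into the $\mathcal{M}$-SDE with the smaller initial value $\mu_{1}$, the SDE comparison theorem, which applies because the drift $-f$ is Lipschitz in the state by Assumption~\ref{f} and the diffusion is identical on both sides, gives
$$\mathcal{M}^{t,\mu_{1},\alpha^{2}}_{s}\;\le\;\mathcal{M}^{t,\mu_{2},\alpha^{2}}_{s}\;\le\; Y^{1}_{s},\qquad s\in[t,T],$$
so the upper admissibility bound is automatic; only the lower bound $Y^{0}=\mathcal{E}_{\cdot,T}^{f}[0]$ may be violated. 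To enforce it, I would introduce the hitting time
$$\sigma:=\inf\{s\ge t:\,\mathcal{M}_{s}^{t,\mu_{1},\alpha^{2}}\le Y^{0}_{s}\}\wedge T,$$
and paste with the $Z$-component of the BSDE of Definition~\ref{deff},
$$\alpha^{1}:=\alpha^{2}\mathbf{1}_{[t,\sigma]}+Z^{0}\mathbf{1}_{(\sigma,T]}.$$
By continuity of $\mathcal{M}^{t,\mu_{1},\alpha^{2}}$ and $Y^{0}$, $\mathcal{M}_{\sigma}^{t,\mu_{1},\alpha^{2}}=Y^{0}_{\sigma}$ on $\{\sigma<T\}$; reading the backward equation of $Y^{0}$ as a forward SDE of the same form as the one defining $\mathcal{M}$ (namely $dY^{0}_{s}=-f(s,Y^{0}_{s},Z^{0}_{s})ds+Z^{0}_{s}dW_{s}$), uniqueness then yields $\mathcal{M}^{t,\mu_{1},\alpha^{1}}\equiv Y^{0}$ on $[\sigma,T]$. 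Consequently $\alpha^{1}\in \textbf{A}_{t,\mu_{1}}$ and $\mathcal{M}_{T}^{t,\mu_{1},\alpha^{1}}\le \mathcal{M}_{T}^{t,\mu_{2},\alpha^{2}}$ a.s.

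The non-decreasing monotonicity of $\Phi$ and the comparison theorem for $g$-BSDEs then give
$$\mathcal{Y}_{t}(\mu_{1})\;\le\;\mathcal{E}_{t,T}^{g}[\Phi(\mathcal{M}_{T}^{t,\mu_{1},\alpha^{1}})]\;\le\;\mathcal{E}_{t,T}^{g}[\Phi(\mathcal{M}_{T}^{t,\mu_{2},\alpha^{2}})],$$
and taking the $\essinf$ over $\alpha^{2}\in \textbf{A}_{t,\mu_{2}}$ delivers $\mathcal{Y}_{t}(\mu_{1})\le \mathcal{Y}_{t}(\mu_{2})$ a.s.; the second statement on $\{\mu_{1}\ge \mu_{2}\}$ follows by exchanging $\mu_{1}$ and $\mu_{2}$. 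The main delicate point is the pasting at $\sigma$: one has to check that $\mathcal{M}^{t,\mu_{1},\alpha^{1}}$ genuinely stays on $Y^{0}$ throughout $(\sigma,T]$, for which it is crucial that the BSDE for $Y^{0}$ coincides, when rewritten forward, with the $\mathcal{M}$-SDE driven by $Z^{0}$; this step is slightly more subtle than in the linear setting of \cite{BER}, where $\mathcal{M}$ is simply a Brownian martingale and no nonlinear drift must be matched.
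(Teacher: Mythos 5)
Your proposal is correct and follows essentially the same route as the paper: the a.s.-ordered case is handled exactly as in the paper's Step 1, by running the $\mu_{2}$-admissible control from the smaller initial value and switching to $Z^{0}$ at the hitting time of $\mathcal{E}_{\cdot,T}^{f}[0]$, and the general case is recovered by the same $\mathcal{F}_{t}$-pasting on $\{\mu_{1}\le\mu_{2}\}$ as in the paper's Step 2 (your preliminary local property is the paper's Lemma~\ref{R}). The only differences are cosmetic: you take the essential infimum over an arbitrary admissible control rather than along the minimizing sequence of Lemma~\ref{sequence}, and you make explicit the pathwise flow comparison for the $\mathcal{M}$-SDE that the paper leaves implicit.
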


\begin{proof}
The proof is divided in two steps.\\

\noindent \textbf{Step 1.} 
We set $\Tilde{\mu}_1:= \mu_{1} \wedge \mu_{2}$ and $\Tilde{\mu}_2:=  \mu_{1}  \vee  \mu_{2}.$ Remark that $\Tilde{\mu}_1$ and $\Tilde{\mu}_2$ belong to $\textbf{D}_t$.
%By definition of $\cal{Y}_{t}$, we know that it exists $ (\an}, \beta^n)\in \mb{A}_{t}^{\Tilde{\mu}_{2}}$  s.t. $\cal{E}_{t}^{g}[\Phi(\cal{M}_{T}^{\mu_{2},\a_{n}})]\to \cal{Y}_{t}(\Tilde{\mu}_{2})$  a.s.

By Lemma \ref{sequence}, we know that it exists $ \a^{n} \in \mb{A}_{t, \Tilde{\mu}_{2}}$  s.t. $\cal{E}_{t,T}^{g}[\Phi(\cal{M}_{T}^{\Tilde{\mu}_{2},\a^{n}})]\to \cal{Y}_{t}(\Tilde{\mu}_{2})$  a.s.

Fix $n \in \mathbb{N}$. We define $\tilde{\a}^{n} \in \cal{A}_{t, \Tilde{\mu}_{1}}$  as follows:
$$
\tilde{\a}^{n}_{s}:= \a_{s}^{n}\mb{1}_{s\leq \tau} + Z_{s}^{0}\mb{1}_{s>\tau},
$$
where $\tau:= \inf \{ t \leq s \leq T: \cal{M}_{s}^{\Tilde{\mu}_{1},\a^{n}} = \cal{E}_{s,T}^{f}[0]\} \wedge T$, with the convention $\inf \emptyset =+\infty.$ Recall that $Z^0$ is the associated control to the the BSDE with terminal condition $0$ and driver $f$.

By construction of $\tilde{\a}^{n}$, we have $\cal{M}_{T}^{\Tilde{\mu}_{1},\a^{n}} \in [0,1]$ a.s. Now, by using the fact that $\Phi$ in nondecreasing
and the comparison theorem for BSDEs, we obtain:
$$
\cal{E}_{t,T}^{g}[\Phi(\cal{M}_{T}^{\tilde{\mu}_{1},\tilde{\a}^{n}})]\leq \cal{E}_{t,T}^{g}[\Phi(\cal{M}_{T}^{\tilde{\mu}_{2},\a^{n}})]\; \text{a.s.}
$$
which implies
\begin{equation}
\cal{Y}_{t}(\Tilde{\mu}_{1})\leq \cal{E}_{t,T}^{g}[\Phi(\cal{M}_{T}^{\Tilde{\mu}_{2},\a^{n}})]\; \text{a.s.}
\end{equation}
By letting $n\to\infty$ in the above relation, we obtain $\cal{Y}_{t}(\Tilde{\mu}_{1})\leq \cal{Y}_{t}(\Tilde{\mu}_{2})$  a.s.\\

\noindent{\bf Step 2.}
We define $A:=\{\mu_1 \leq \mu_2 \} \in \mathcal{F}_t.$  Let us show that $\mathcal{Y}_t(\Tilde{\mu}_1)=\mathcal{Y}_t(\mu_1)\textbf{1}_A+\mathcal{Y}_t(\mu_2)\textbf{1}_{A^c}.$  For all $\alpha_{i} \in \textbf{A}_{t,\mu_{i}}$, $i=1,2$, we set $\Tilde{\alpha}:= \textbf{1}_{[t,T]}\left(\alpha_1 \textbf{1}_A+\alpha_2 \textbf{1}_{A^c} \right) \in \textbf{A}_{t,{\Tilde{\mu}}_1}.$ Bt the zero-one law for $f$- conditional expectations, we get $\mathcal{E}_{t,T}^g[\Phi(\mathcal{M}_T^{\Tilde{\mu}_1, \Tilde{\alpha}})]=\mathcal{E}_{t,T}^g[\Phi(\mathcal{M}_T^{\mu_1, \alpha_1})]\textbf{1}_A+\mathcal{E}_{t,T}^g[\Phi(\mathcal{M}_T^{\mu_2, \alpha_2})]\textbf{1}_{A^c}$ and by arbitrariness of $\alpha_i$, $i=1,2$, we derive that $\mathcal{Y}_t(\Tilde{\mu}_1) \leq \mathcal{Y}_t(\mu_1)\textbf{1}_A+ \mathcal{Y}_t(\mu_2)\textbf{1}_{A^c}.$ In order to show that $\mathcal{Y}_t(\Tilde{\mu}_1) \geq \mathcal{Y}_t(\mu_1)\textbf{1}_A+ \mathcal{Y}_t(\mu_2)\textbf{1}_{A^c},$ we use the previous equality with $\alpha_1 := \Tilde{\alpha} \textbf{1}_A +\Tilde{\alpha}_1\textbf{1}_{A^c}$ and $\alpha_2:= \Tilde{\alpha}_2 \textbf{1}_{A}+ \Tilde{\alpha} \textbf{1}_{A^c}$ , for all  $\Tilde{\alpha}  \in \textbf{A}_{t, \Tilde{\mu}_1}$, $\Tilde{\alpha}_1 \in \textbf{A}_{t, \mu_1}$ and $\Tilde{\alpha}_2 \in \textbf{A}_{t, \mu_2}$. Similarly, one can prove that
$\mathcal{Y}_t(\Tilde{\mu}_2)=\mathcal{Y}_t(\mu_2)\textbf{1}_A+\mathcal{Y}_t(\mu_1)\textbf{1}_{A^c}.$

From Step 1 and Step 2, the result follows.

\end{proof}

Using the same arguments as in Step 2 of the above proof, one can easily show:

\begin{lemma}\label{R}
Fix $t \in [0,T]$. We have $\mathcal{Y}_t(\mu_1\textbf{1}_A+\mu_2\textbf{1}_{A^c})=\mathcal{Y}_t(\mu_1)\textbf{1}_A+\mathcal{Y}_t(\mu_2)\textbf{1}_{A^c}$, for all $A \in \mathcal{F}_t$, $\mu_1, \mu_2 \in \textbf{D}_t$.
\end{lemma}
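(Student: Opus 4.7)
The plan is to mimic Step 2 of the proof of Theorem \ref{monotonicity}, exploiting the zero-one law for $f$-conditional expectations (which follows from the uniqueness and pasting properties of BSDE solutions). Set $\mu := \mu_1 \textbf{1}_A + \mu_2 \textbf{1}_{A^c}$ and note that since $A \in \mathcal{F}_t$ and each $\mu_i \in \textbf{D}_t$, we have $\mu \in \textbf{D}_t$. The key observation that makes everything work is that for any admissible controls $\alpha_1 \in \textbf{A}_{t,\mu_1}$ and $\alpha_2 \in \textbf{A}_{t,\mu_2}$, the glued control $\alpha := \alpha_1 \textbf{1}_A + \alpha_2 \textbf{1}_{A^c}$ (defined on $[\![t,T]\!]$) belongs to $\textbf{A}_{t,\mu}$, and by the zero-one law applied to the SDE satisfied by $\mathcal{M}$, we have $\mathcal{M}^{\mu,\alpha}_s = \mathcal{M}^{\mu_1,\alpha_1}_s \textbf{1}_A + \mathcal{M}^{\mu_2,\alpha_2}_s \textbf{1}_{A^c}$ on $[\![t,T]\!]$, hence in particular the admissibility constraint $\mathcal{M}^{\mu,\alpha} \in [Y^0, Y^1]$ is preserved. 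The same zero-one law applied to $\mathcal{E}^g$ yields
\begin{equation*}
\mathcal{E}_{t,T}^g[\Phi(\mathcal{M}_T^{\mu,\alpha})] = \mathcal{E}_{t,T}^g[\Phi(\mathcal{M}_T^{\mu_1,\alpha_1})]\textbf{1}_A + \mathcal{E}_{t,T}^g[\Phi(\mathcal{M}_T^{\mu_2,\alpha_2})]\textbf{1}_{A^c}.
\end{equation*}

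For the inequality $\mathcal{Y}_t(\mu) \leq \mathcal{Y}_t(\mu_1)\textbf{1}_A + \mathcal{Y}_t(\mu_2)\textbf{1}_{A^c}$, I would take arbitrary $\alpha_i \in \textbf{A}_{t,\mu_i}$, form the above $\alpha \in \textbf{A}_{t,\mu}$, apply the identity just displayed, and then pass to the essential infimum over $\alpha_1$ and $\alpha_2$ separately using Lemma \ref{sequence} to select minimizing sequences on each of the sets $A$ and $A^c$. For the reverse inequality, I would fix an arbitrary $\Tilde{\alpha} \in \textbf{A}_{t,\mu}$, pick arbitrary reference controls $\Tilde{\alpha}_i \in \textbf{A}_{t,\mu_i}$ ($i=1,2$), and set $\alpha_1 := \Tilde{\alpha}\textbf{1}_A + \Tilde{\alpha}_1\textbf{1}_{A^c}$, $\alpha_2 := \Tilde{\alpha}_2 \textbf{1}_A + \Tilde{\alpha}\textbf{1}_{A^c}$; these lie in $\textbf{A}_{t,\mu_1}$ and $\textbf{A}_{t,\mu_2}$ respectively, and the zero-one identity then gives $\mathcal{E}_{t,T}^g[\Phi(\mathcal{M}_T^{\mu,\Tilde{\alpha}})] \geq \mathcal{Y}_t(\mu_1)\textbf{1}_A + \mathcal{Y}_t(\mu_2)\textbf{1}_{A^c}$ after taking essential infimums on each side. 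Taking the essential infimum over $\Tilde{\alpha}$ closes the argument.

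The only mildly delicate point, and what I would check carefully, is the verification of the zero-one law at the level of the controlled $f$-martingale $\mathcal{M}^{\mu,\alpha}$ and of $\mathcal{E}^g$. For $\mathcal{E}^g$ this is a standard consequence of uniqueness of BSDE solutions together with the fact that $g(t,0,0)\textbf{1}_A$ behaves well under indicator splitting; for $\mathcal{M}^{\mu,\alpha}$ it is the analogous statement for the forward SDE with Lipschitz driver $f$. Both rely on the fact that $A \in \mathcal{F}_t$, so the pasted processes remain adapted. Once this is in hand, the two one-sided inequalities combine to give the claimed equality, and no further ingredient beyond what is already used in Step 2 of Theorem \ref{monotonicity} is needed.
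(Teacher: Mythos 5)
Your proposal is correct and follows essentially the same route as the paper, which simply invokes Step 2 of the proof of Theorem \ref{monotonicity}: gluing controls along $A\in\mathcal{F}_t$, using the zero-one law for the controlled $f$-martingale and for $\mathcal{E}^g$, and deriving the two one-sided inequalities by the symmetric choice of pasted controls. Your explicit attention to the admissibility of the glued control and to the forward-SDE version of the zero-one law is a welcome elaboration of details the paper leaves implicit, but it is not a different argument.
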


We now recall the following result, which can be found in  \cite{BER}.

\begin{proposition}\label{estimation}
Let the Assumption \ref{f} (with $g$ instead $f$) holds. Then:
\begin{itemize}
%\item[(i)] There exists $C>0$ which only depends on $C_g$ and $T$ such that
%\begin{align}
\item[$(i)$] There exist $\chi_g \in \mathbf{L}_2$ and $C>0$ which only depends on $C_g$ and $T$ such that:
$$\esssup_{\xi \in \textbf{L}_0([0,1])}|\mathcal{E}_{t,T}^g[\xi] \leq C(1+E_t[|\chi_g|^2])|^{\frac{1}{2}}), \,\, \, 0 \leq t \leq T. $$
\item[$(ii)$] For some $\xi \in \textbf{L}_2$ and $t \in [0,T]$, consider a family $(\xi^\varepsilon)_{\varepsilon \geq 0} \subset \textbf{L}_0(\mathbf{R}^d)$ satisfying $|\xi^\varepsilon| \leq \xi$ and $\xi^\varepsilon \in \textbf{L}_0(\mathcal{F}_{(t+\varepsilon) \wedge T}),$ for any $\varepsilon>0.$ Then, there exists a family $(\eta_\varepsilon)_{\varepsilon >0} \subset \textbf{L}_0(\mathbf{R})$ which converges to $0$ $\mathbb{P}$ - a.s. as $\varepsilon \rightarrow 0$ such that:
\begin{align*}
\left|\mathcal{E}^g_{t,t+\varepsilon}[\xi^\varepsilon]-E_t[\xi^\varepsilon] \right| \leq \eta_{\varepsilon}, \,\,\, \forall \varepsilon \in [0,T-t].
\end{align*}
\item[$(iii)$] Let $(\xi^\varepsilon)_{\varepsilon>0}$ and $t \in [0,T]$ be as in $(ii)$. Then, there exists a family $(\eta_\varepsilon)_{\varepsilon>0} \subset \textbf{L}_0(\mathbf{R})$ which converges to $0$ a.s. as $\varepsilon \rightarrow 0$ such that
\begin{align*}
\left|\mathcal{E}^g_{t-\varepsilon,t}[\xi^\varepsilon]-E_t[\xi^\varepsilon] \right| \leq \eta_{\varepsilon}, \,\,\, \forall \varepsilon \in [0,t].
\end{align*}
 \end{itemize}

\end{proposition}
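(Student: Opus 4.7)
The plan is to treat all three statements via the BSDE representation of the $g$-conditional expectation together with classical a priori estimates for Lipschitz BSDEs. Throughout, let $(Y^\xi, Z^\xi) \in \mathbf{S}_2 \times \mathbf{H}_2$ denote the unique solution of the BSDE with driver $g$ and terminal condition $\xi$, so that $\mathcal{E}^g_{s,T}[\xi] = Y^\xi_s$.

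For point (i), I would apply It\^o's formula to $|Y^\xi_t|^2$, use the Lipschitz property of $g$ in $(y,z)$ with constant $C_g$, and invoke Gronwall's lemma to derive the pointwise conditional estimate
\begin{align*}
|Y^\xi_t|^2 \leq C\, E_t\!\left[|\xi|^2 + \int_t^T |g(s,0,0)|^2\, ds\right],
\end{align*}
for some constant $C$ depending only on $(C_g, T)$. Since $|\xi| \leq 1$, setting $\chi_g := \bigl(\int_0^T |g(s,0,0)|^2\, ds\bigr)^{1/2}$, which lies in $\mathbf{L}_2$ by Assumption \ref{f}, yields the claimed bound after taking essential suprema over $\xi \in \mathbf{L}_0([0,1])$.

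For points (ii) and (iii), I would use the integral form of the BSDE: on $[t, t+\varepsilon]$,
\begin{align*}
\mathcal{E}^g_{t,t+\varepsilon}[\xi^\varepsilon] = E_t\!\left[\xi^\varepsilon + \int_t^{t+\varepsilon} g(s,Y_s,Z_s)\, ds\right],
\end{align*}
so that the Lipschitz bound $|g(s,Y_s,Z_s)| \leq |g(s,0,0)| + C_g(|Y_s| + \|Z_s\|)$ combined with Cauchy--Schwarz gives
\begin{align*}
\left|\mathcal{E}^g_{t,t+\varepsilon}[\xi^\varepsilon] - E_t[\xi^\varepsilon]\right| \leq \sqrt{\varepsilon}\, E_t\!\left[\int_t^{t+\varepsilon}\!\bigl(|g(s,0,0)|^2 + C_g^2|Y_s|^2 + C_g^2\|Z_s\|^2\bigr)ds\right]^{1/2}.
\end{align*}
Using the uniform-in-$\varepsilon$ bounds on $Y$ and $Z$ available from (i) and the standard $\mathbf{H}_2$-stability estimate (both made possible by the uniform domination $|\xi^\varepsilon| \leq \xi \in \mathbf{L}_2$), the right-hand side is a.s.\ bounded by an $\mathcal{F}_T$-measurable $\zeta \in \mathbf{L}_2$ times $\sqrt{\varepsilon}$, hence defines the desired $\eta_\varepsilon \to 0$ a.s. Point (iii) is handled symmetrically on $[t-\varepsilon, t]$ with $E_{t-\varepsilon}$ in place of $E_t$; the additional subtlety is to pass from the conditional expectation at $t-\varepsilon$ to the one at $t$, which one handles by exploiting the a.s.\ continuity of the martingale $s \mapsto E_s[\xi^\varepsilon]$ and absorbing the discrepancy into $\eta_\varepsilon$.

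The main obstacle is not the algebraic manipulation but the \emph{almost sure} (rather than merely $\mathbf{L}_2$) convergence of $\eta_\varepsilon$ in (ii)--(iii), which forces one to produce a single $\mathcal{F}_T$-measurable dominator independent of $\varepsilon$ so that Lebesgue's dominated convergence theorem applies pathwise. The key input is the uniform domination $|\xi^\varepsilon|\leq \xi\in\mathbf{L}_2$, which propagates through the standard BSDE stability estimates to yield $\sup_\varepsilon(\|Y\|_{\mathbf{S}_2} + \|Z\|_{\mathbf{H}_2}) < \infty$ and hence the required pathwise bound.
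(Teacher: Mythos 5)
The paper does not actually prove this proposition: it is stated in the Appendix with the remark that it ``can be found in \cite{BER}'', so there is no in-paper proof to compare yours against, and your write-up has to be judged on its own. Your treatment of (i) and (ii) is the standard argument and is sound: the conditional a priori estimate $|Y^\xi_t|^2\leq C\,E_t[|\xi|^2+\int_t^T|g(s,0,0)|^2ds]$ gives (i) with $\chi_g=(\int_0^T|g(s,0,0)|^2ds)^{1/2}$, and for (ii) the decomposition $\mathcal{E}^g_{t,t+\varepsilon}[\xi^\varepsilon]-E_t[\xi^\varepsilon]=E_t[\int_t^{t+\varepsilon}g(s,Y_s,Z_s)ds]$ together with conditional Cauchy--Schwarz and the uniform bound $E_t[\int_t^{t+\varepsilon}(|Y^\varepsilon_s|^2+\|Z^\varepsilon_s\|^2)ds]\leq C\,E_t[\xi^2+\chi_g^2]$ yields $\eta_\varepsilon=C\sqrt{\varepsilon}\,E_t[\xi^2+\chi_g^2]^{1/2}\to 0$ a.s.; note the dominator you produce is in fact $\mathcal{F}_t$-measurable, not merely $\mathcal{F}_T$-measurable, which is all the better.

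The gap is in (iii). Your main estimate controls $|\mathcal{E}^g_{t-\varepsilon,t}[\xi^\varepsilon]-E_{t-\varepsilon}[\xi^\varepsilon]|$, and you propose to bridge the remaining term $E_{t-\varepsilon}[\xi^\varepsilon]-E_t[\xi^\varepsilon]$ by the a.s.\ continuity of the martingale $s\mapsto E_s[\xi^\varepsilon]$. That continuity is for a \emph{fixed} terminal variable, whereas here $\xi^\varepsilon$ moves with $\varepsilon$, and the diagonal difference need not vanish: take $g\equiv 0$ and $\xi^\varepsilon=\mathrm{sgn}(W^1_t-W^1_{t-\varepsilon})$, which satisfies all the hypotheses of (ii); then $\mathcal{E}^g_{t-\varepsilon,t}[\xi^\varepsilon]=E_{t-\varepsilon}[\xi^\varepsilon]=0$ while $E_t[\xi^\varepsilon]=\pm 1$, so $|\mathcal{E}^g_{t-\varepsilon,t}[\xi^\varepsilon]-E_t[\xi^\varepsilon]|=1$ for every $\varepsilon$. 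This shows that statement (iii) as literally printed (comparison with $E_t[\xi^\varepsilon]$) is false, and that no continuity argument can repair your step; the statement must be read, as in \cite{BER}, with $E_{t-\varepsilon}[\xi^\varepsilon]$ in place of $E_t[\xi^\varepsilon]$ (or with $\xi^\varepsilon$ assumed $\mathcal{F}_{(t-\varepsilon)\vee 0}$-measurable, which makes the two coincide). Under that reading your estimate from (ii), run on $[t-\varepsilon,t]$, already gives the result and the ``additional subtlety'' you invoke disappears. You should either flag the typo or drop the continuity step; as written it is the one genuinely incorrect link in the chain.
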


\end{document}